\newcommand{\R}{\mathbb{R}}
\newcommand{\Z}{\mathbb{Z}}
\newcommand{\C}{\mathbb{C}}
\renewcommand{\O}{\mathcal{O}}
\renewcommand{\O}{\mathcal{O}}
\renewcommand{\P}{\mathbb{P}}
\newcommand{\dbar}{\overline{\partial}}
\newcommand{\NN}{\mathcal{N}}
\renewcommand{\a}{\mathfrak{a}}
\newcommand{\g}{\mathfrak{g}}
\newcommand{\h}{\mathfrak{h}}
\renewcommand{\k}{\mathfrak{k}}
\renewcommand{\l}{\mathfrak{l}}
\newcommand{\p}{\mathfrak{p}}
\renewcommand{\u}{\mathfrak{u}}
\renewcommand{\sl}{\mathfrak{sl}}
\renewcommand{\t}{\mathfrak{t}}
\newcommand{\w}{\mathfrak{w}}
\newcommand{\dd}{\frac{d}{d\epsilon} \bigg|_{\epsilon = 0}}
\DeclareMathOperator{\Hom}{Hom}
\DeclareMathOperator{\height}{ht}
\DeclareMathOperator{\ad}{ad}
\DeclareMathOperator{\At}{At}
\DeclareMathOperator{\End}{End}
\DeclareMathOperator{\Tot}{Tot}
\newcommand{\Ad}{\textnormal{Ad}}
\newcommand{\Id}{\textnormal{Id}}
\newcommand{\commsq}[8]{\xymatrix{ #1 \ar[r]^{#5} \ar[d]_{#6} & #2 \ar[d]^{#7} \\ #3 \ar[r]_{#8} & #4 }}
\newcommand{\m}{\mathfrak{m}}
\newcommand{\vv}{\mathfrak{v}}
\newtheorem{thm}[equation]{Theorem}
\newtheorem{lem}[equation]{Lemma}
\newtheorem{prop}[equation]{Proposition}
\newtheorem{cor}[equation]{Corollary}
\theoremstyle{definition}
\newtheorem{defn}[equation]{Definition}
\newtheorem{rmk}[equation]{Remark}
\numberwithin{equation}{section}
\begin{document}

\title[Connection for bundles on $G/P$ and twistor space of coadjoint 
orbits]{The universal connection for principal bundles over 
homogeneous spaces and twistor space of 
coadjoint orbits}

\author[I.~Biswas]{Indranil Biswas}

\address{School of Mathematics, Tata Institute of Fundamental Research, Homi Bhabha Road, Mumbai 400005, India}

\email{indranil@math.tifr.res.in}

\author[M.L.~Wong]{Michael Lennox Wong}

\address{Universit\"at Duisburg-Essen, Fakult\"at f\"ur Mathematik, Thea-Leymann-Str.~9, 45127 Essen, Germany}

\email{michael.wong@uni-due.de} 

\subjclass[2010]{14M17, 32L25, 32L10.}

\keywords{$\lambda$-connection, rational homogeneous space, twistor space, complexification, Levi subgroup.}

\begin{abstract}
Given a holomorphic principal bundle $Q\, \longrightarrow\, X$, the universal space of holomorphic connections is a torsor 
$C_1(Q)$ for $\ad Q \otimes T^*X$ such that the pullback of $Q$ to $C_1(Q)$ has a tautological holomorphic connection. When 
$X\,=\, G/P$, where $P$ is a parabolic subgroup of a complex simple group $G$, and $Q$ is the frame bundle of an ample line 
bundle, we show that $C_1(Q)$ may be identified with $G/L$, where $L\, \subset\, P$ is a Levi factor. We use this identification 
to construct the twistor space associated to a natural hyper-K\"ahler metric on $T^*(G/P)$, recovering Biquard's description of 
this twistor space, but employing only finite-dimensional, Lie-theoretic means.
\end{abstract}

\maketitle

\tableofcontents

\section{Introduction}

Let $X$ be a complex manifold, $G$ a complex Lie group and $Q$ a holomorphic principal $G$-bundle over $X$. As is well-known, 
holomorphic connections on $Q$ may be identified with holomorphic splittings of the Atiyah sequence canonically associated to 
$Q$:
$$
0\, \longrightarrow\, \ad Q\, \longrightarrow\, \At Q \, \longrightarrow\, \Theta_X \, \longrightarrow\, 0;
$$
here, $\Theta_X$ is the holomorphic tangent bundle to $X$ and $\At Q$ is the Atiyah bundle \cite[\S~2]{Atiyah}. It is possible 
that while there is no such splitting over $X$, upon pullback via some holomorphic mapping $f \,:\, Y \,\longrightarrow\, X$, 
where $Y$ is another complex manifold, this sequence does split. In fact, a tautological construction yields such a $Y$ for any 
$Q$, which can be briefly described as follows. Tensoring the Atiyah exact sequence with the holomorphic cotangent bundle 
$\Omega_X\,=\, \Theta_X^*$ we get
$$
0\, \longrightarrow\, \ad Q \otimes \Omega_X\, \longrightarrow\, \At Q \otimes \Omega_X \, \stackrel{s}{\longrightarrow}\, \Theta_X \otimes \Omega_X\,=\, \End(\Theta_X) \, \longrightarrow\, 0\, .
$$
The inverse image $C_1(Q)\,:=\, s^{-1}(\text{Id}_{\Theta_X})\, \subset\, \At Q \otimes \Omega_X\, \longrightarrow\, X$ is the universal space of holomorphic connections in the sense that for any open subset $U\, \subset\, X$, the holomorphic sections of the fibre bundle $C_1(Q)\vert_U$ over $U$ are precisely the holomorphic connections on $Q\vert_U$. The details of this construction as well as other related results are found in Sections \ref{s:cxnpb} and \ref{s:unpb}. In fact, since it presents no further complication and some statements prove to be useful later, we work with $\lambda$-connections for $\lambda \in \C$.

In Section \ref{s:Atseqhomogsp}, we consider the case where $X$ is a (complex) homogeneous space. In this case, the sequences of vector bundles above all have descriptions as those associated to canonical sequences of representations for the groups involved. Of course, the universal pullback connection space also has such a simple description, which we give. Emphasis is on the case where $P$ is a parabolic subgroup of complex simple affine algebraic group $G$, so that $X = G/P$ is a projective rational homogeneous space. Let $Q$ be a holomorphic $\C^\times$-bundle over $G/P$ associated to a strictly anti-dominant character of $P$, so that the associated line bundle is (very) ample. If $L\, \subset\, P$ is a Levi factor, then the main result of Section \ref{s:Atseqhomogsp} shows that the above fibre bundle $C_1(Q)\, \longrightarrow\, G/P$ may be identified with the canonical projection $G/L\,\longrightarrow\, G/P$. In particular, $G/L$ may be viewed as a complexification of $G/P$.

We recall that a hyper-K\"ahler manifold is a $C^\infty$ manifold $M$ equipped with
\begin{itemize}
\item integrable almost complex structures, $I$, $J$ and $K$ satisfying the quaternionic relation $IJK \,=\, -\text{Id}$, and

\item a Riemannian metric $g$ which is K\"ahler with respect to each of $I$, $J$ and $K$.
\end{itemize}
To a hyper-K\"ahler manifold there is an associated twistor space, which is a holomorphic fibre bundle
$$
Z\,\longrightarrow\, \P^1\, =\, S^2\,=\, \{(a,\, b,\, c)\, \in\, {\mathbb R}^3\, \mid\, a^2+b^2+c^2\,=\,1\}
$$
such that the fibre over $(a,\, b,\, c)\, \in\, S^2$ is $Y$ equipped with the integrable almost complex structure $aI+bJ+cK$. Thus, any fibre over $\P^1$ may be identified with the original $C^\infty$ manifold. Furthermore, it is a fundamental theorem that from such a fibre bundle over $\P^1$, if one also has a compatible real structure and fibre-wise holomorphic symplectic form, one can recover the hyper-K\"ahler metric on the fibres \cite[Theorem 3.3]{HKLR}.

Hyper-K\"ahler metrics on coadjoint orbits for semisimple algebraic 
groups were first constructed by Kronheimer \cite{Kronheimer} in the 
case of regular semisimple orbits; this was generalized to arbitrary 
orbits by Biquard \cite{Biquard1}. Furthermore, Biquard also gave a 
description of the twistor space in these cases, in which the general 
fibre is (isomorphic to) the coadjoint orbit and the special fibres 
(which one usually pictures over $0, \infty \in \P^1$) are the cotangent 
bundles to a (partial) flag variety \cite{Biquard1,Biquard2}. The 
technical method employed in this series of papers is the use of Nahm's 
equations, the hyper-K\"ahler metric thus arising as an 
infinite-dimensional hyper-K\"ahler quotient.

Of course, given a semisimple element in a semisimple complex Lie 
algebra, its (co)adjoint stabilizer is a reductive subgroup of $G$ of 
full rank, hence may be understood as a Levi factor $L$ of some 
parabolic subgroup $P$ of $G$; therefore the coadjoint orbit is 
isomorphic to $G/L$. Using the constructions of Section 
\ref{s:Atseqhomogsp}, in Section \ref{s:twistor}, we are able to recover 
the construction of the twistor space given by Biquard, thereby 
obtaining the existence of a hyper-K\"ahler metric on $T^*(G/P)$ or, 
equivalently, $G/L$---via the fundamental theorem mentioned 
above---using only the means of Lie theory.

The method we use to obtain the existence of the hyper-K\"ahler metric on $T^*(G/P)$---namely, by constructing the twistor space directly---resembles that of Feix, who showed that, for a K\"ahler manifold $X$, some neighbourhood of the zero section in $T^*X$ always admits a hyper-K\"ahler metric \cite[Theorem A]{Feix} (the same result was obtained by Kaledin, but by different methods \cite[Theorem 1.1]{Kaledin}). In the cases we consider, we find that the hyper-K\"ahler metric in fact exists on the entirety of the cotangent bundle, which does not necessarily hold in general (cf.~\cite[Theorem B]{Feix}).

There exist some hyper-K\"ahler moduli spaces, prominent among them the character variety for a compact Riemann surface, which have elementary finite-dimensional constructions, but for which there is no known finite-dimensional description or construction of the metric. (For the example of the character variety just mentioned, the construction is as an
affine geometric invariant theory quotient, yet the hyper-K\"ahler metric is by an infinite-dimensional hyper-K\"ahler
quotient, via a dimensional reduction of the Yang--Mills equations \cite[\S~6]{Hitchin}.) The result here may be regarded as a step towards a finite-dimensional understanding of these metrics.

\section{Principal bundles, connections and pullbacks} \label{s:cxnpb}

Let $X$ be a connected complex manifold; its holomorphic tangent and cotangent bundles will be denoted by $\Theta_X$ and $\Omega_X$, respectively. We will write $T^*X \,:= \,\Tot(\Omega_X)$ for the total space of $\Omega_X$. Let $G$ be a complex Lie group; its Lie algebra will be denoted by $\g$. Let $\pi \,:\, P \,\longrightarrow\, X$ be a holomorphic principal $G$-bundle over $X$ with $G$ acting on the right of $P$. The holomorphic tangent bundle of the total space of $P$ will be denoted by $\Theta_P$. The adjoint vector bundle $\ad P\,=\, P\times^G \g$ is the one associated to $P$ for the adjoint action of $G$ on $\g$. The pullback $\pi^*\ad P$ is the trivial vector bundle $\O_P^\g\,=\, P\times{\mathfrak g}\, \longrightarrow\, P$, which, in turn, is identified with $\ker(d\pi)$ by the action of $G$ on $P$, where $d\pi\, :\, \Theta_P\,\longrightarrow\, \pi^*\Theta_X$ is the differential of the projection $\pi$. Let
$$
\At P\, :=\, \Theta_P/G\,=\, (\pi_*\Theta_P)^G\, \subset\, \pi_*\Theta_P
$$
be the Atiyah bundle and
\begin{align} \label{A}
0 \,\longrightarrow\, \ad P \,\longrightarrow\, \At P \,\longrightarrow\, \Theta_X \,\longrightarrow\, 0
\end{align}
the Atiyah exact sequence for $P$, which is the quotient by $G$ of the exact sequence
\begin{align}\label{A1}
0\,\longrightarrow\, \pi^*\ad P\,=\, \ker(d\pi) \,\stackrel{\iota}{\longrightarrow}\, \pi^*\At P\,=\, \Theta_P \,\stackrel{d\pi}{\longrightarrow}\, \pi^* \Theta_X \,\longrightarrow\, 0\, 
\end{align}
on $P$. To describe $\At P$ in terms of local trivializations, fix a holomorphically trivializing open cover $\{ X_\alpha \}$ of $X$, so that there exist $G$-equivariant holomorphic maps 
$$\varphi_\alpha \,:\, P_\alpha \,:=\, P|_{X_\alpha} \,
\stackrel{\sim}{\longrightarrow}\,X_\alpha \times G$$ 
over the identity map of $X_\alpha$. Let $g_{\alpha \beta} \,:\, X_{\alpha \beta}\,:=\, X_\alpha \cap X_\beta\,\longrightarrow\, G$ be the corresponding transition functions satisfying
\begin{align*}
\varphi_\alpha \circ \varphi_\beta^{-1}(x,\, g) \,=\, \big( x,\, g_{\alpha \beta}(x) a \big)\, .
\end{align*}
Then one has induced isomorphisms $\phi_\alpha \,:\, \At P_\alpha \,
\stackrel{\sim}{\longrightarrow} \,\Theta_\alpha \oplus \O_\alpha^\g$, where $\Theta_\alpha \,:= \,\Theta_{X_\alpha} \,=\, \Theta_X|_{X_\alpha}$ and $\O_\alpha^\g \,:=\, \O_{X_\alpha} \otimes_\C \g$, for which
\begin{align} \label{B}
\phi_\alpha \circ \phi_\beta^{-1}(v, \,\xi) \,=\, \left( v,\, \Ad_{g_{\alpha \beta}} \xi + dg_{\alpha \beta} g_{\alpha \beta}^{-1}(v) \right).
\end{align}

Let $\lambda \in \C$. We recall that a holomorphic $\lambda$-connection on $P$ is an ${\mathcal O}_X$-linear homomorphism $s\, :\, \At P \,\longrightarrow\, \ad P$ whose composition with the inclusion of \eqref{A}
\begin{align*}
\ad P \,\longrightarrow\, \At P \,\stackrel{s}{\longrightarrow}\, \ad P
\end{align*}
is simply multiplication by the scalar $\lambda$ on $\ad P$; in the case $\lambda \,=\, 1$ this $s$ is a holomorphic splitting of \eqref{A}, and so a holomorphic connection on $P$ in the usual sense. A $0$-connection is a homomorphism $\Theta_X \,\longrightarrow\,\ad P$. Note that a $\lambda$-connection may equivalently be described as a $G$-equivariant homomorphism $\widetilde{s}\, :\,\Theta_P \,\longrightarrow\, \O_P^\g$ such that $\widetilde{s}\circ\iota\,=\, \lambda\cdot\text{Id}_{\ad P}$, where $\iota$ is the homomorphism in \eqref{A1}. Such a homomorphism $\widetilde{s}$ defines a $G$-equivariant $\g$-valued holomorphic $1$-form on $P$ (the group $G$ has the adjoint action on $\mathfrak g$); it is called the $1$-form of the $\lambda$-connection. The kernel of a connection $1$-form $\Theta_P \,\longrightarrow\, \O_P^\g$ is called the horizontal distribution of the connection.

Suppose $Y$ is another complex manifold and $f\,:\, Y \,\longrightarrow\, X$ a holomorphic map. We may pull back the Atiyah sequence \eqref{A} along $f$ to get an exact sequence on $Y$. Also, we may pull back $P$ along $f$ to get a principal $G$-bundle $f^*P$ on $Y$, which has its own Atiyah sequence; one then has a morphism between these exact sequences
\begin{equation} \label{B1}
\vcenter{ \xymatrix{
0 \ar[r] & \ad f^* P \ar[r] \ar@{=}[d] & \At f^*P \ar[r] \ar[d]^\beta
& \Theta_Y \ar[r] \ar[d]^{df} & 0 \\
0 \ar[r] & f^* \ad P \ar[r] & f^*\At P \ar[r] & f^* \Theta_X 
\ar[r] & 0.
} }
\end{equation}
The above homomorphism $\beta$ is constructed as follows: consider the Cartesian diagram
\begin{align*}
\commsq{ f^*P }{ P }{ Y }{ X }{ F }{ \Pi }{ \pi }{ f }
\end{align*}
associated to the above pair $(P,\, f)$. This produces a commutative diagram on the total space $f^*P$
\begin{align} \label{B2}
\vcenter{ \xymatrix{
0 \ar[r] & \O_{f^* P}\otimes\g \ar[r] \ar@{=}[d] & \Theta_{f^*P} \ar[r] \ar[d]^{dF} & \Pi^* \Theta_Y \ar[r] \ar[d]^{\Pi^*df} & 0 \\
0 \ar[r] & \O_{f^* P}\otimes\g \ar[r] & F^* \Theta_P \ar[r] & \Pi^* f^* \Theta_X = F^* \pi^* \Theta_X \ar[r] & 0. } }
\end{align}
Since the differential $dF$ in \eqref{B2} is $G$-equivariant, the diagram in \eqref{B2} descends to a commutative diagram of homomorphisms on $Y$. This descended diagram is the one in \eqref{B1}. Therefore, the diagram in \eqref{B2} is the pullback, by the map $\Pi$, of the diagram in \eqref{B1}. Note that if $f$ is a submersion, meaning $df$ is surjective, then $\beta$ is surjective as well. If $$s\, :\, \At P \,\longrightarrow\, \ad P$$ is a $\lambda$-connection on $P$, then $(f^*s) \circ \beta \,:\, \At f^* P \,\longrightarrow\, \ad f^* P$ is a $\lambda$-connection on $f^*P$. In other words, a $\lambda$-connection on $P$ pulls back to a $\lambda$-connection on $f^*P$.

\begin{defn} \label{d:trivfib}
Let $X$, $Y$, $P$ and $f \,:\, Y\,\longrightarrow\, X$ be as above, and let $D$ be a holomorphic $\lambda$-connection on the pulled back principal bundle $f^* P$ given by a splitting $s \,:\, \At f^* P \,\longrightarrow\, \ad f^*P$. We say that $D$ is \emph{trivial on the fibres of $f$} if there is a homomorphism $$s' \,:\, f^*\At P \,\longrightarrow\, f^*\ad P$$ such that $s \,=\, s'\circ\beta$, where $\beta$ is the homomorphism in \eqref{B1}.
\end{defn}

It should be clarified that the above condition does not mean that $D$ is the pullback of a $\lambda$-connection on $P$. More precisely, $s'$ need not be the pullback of a splitting of the Atiyah exact sequence for $P$.

Lemma \ref{lemma1} is straightforward to prove, so we omit its proof.

\begin{lem}\label{lemma1}
The following are equivalent.
\begin{enumerate}
\item[(a)] $D$ is trivial on the fibres of $f$.

\item[(b)] The connection $1$-form associated to $D$, which is a section of $\Omega_{f^*P} \otimes \g$, is a section of the sub-sheaf $(dF)^*(F^*\Omega_P) \otimes \g$ of $\Omega_{f^*P} \otimes \g$, where $(dF)^*$ is the dual of the homomorphism $dF$ in \eqref{B2}.

\item[(c)] The horizontal distribution for $D$ contains the relative tangent bundle ${\rm ker}(dF)\, \subset\, Tf^*P$ for $F$.

\item[(d)] For any open cover $\{ X_\alpha \}$ of $X$ with holomorphic trivializations of $P\vert_{X_\alpha}$, if
$$D_\alpha \,\in\, \Gamma(f^{-1}(X_\alpha), \,\Omega_Y \otimes \g)$$ 
are the connection $1$-forms on $f^{-1}(X_\alpha)$ associated to the corresponding trivializations of $f^*P$ over $\{(f^{-1}(X_\alpha)\}$, then
\begin{equation}\label{coc}
D_\alpha \,\in\, \Gamma(f^{-1}(X_\alpha),\, (df)^* (f^*\Omega_X) \otimes \g)\, ,
\end{equation}
where $(df)^*\, :\, f^*\Omega_X\,\longrightarrow\, \Omega_Y$ is the dual of the differential of $f$.

\item[(e)] There exists a $P$ trivializing open cover $\{ X_\alpha \}$ of $X$ as above such that \eqref{coc} holds.
\end{enumerate}
\end{lem}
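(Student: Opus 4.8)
The plan is to route every condition through the connection $1$-form $\omega \,\in\, \Gamma(\Omega_{f^*P}\otimes\g)$ of $D$, that is, the $G$-equivariant homomorphism $\widetilde{s}\,:\,\Theta_{f^*P}\,\longrightarrow\,\O_{f^*P}^\g$ attached to $s$, and to exploit the fact recorded after \eqref{B2} that the diagram \eqref{B2} on $f^*P$ is the $\Pi$-pullback of \eqref{B1} on $Y$; in particular $\Pi^*\beta$ is identified with $dF$ under $\Pi^* f^*\At P \,=\, F^*\Theta_P$ and $\Pi^* f^*\ad P \,=\, \O_{f^*P}^\g$. Since $\Pi$ is a principal bundle projection, hence a surjective submersion, pullback by $\Pi$ is faithful on homomorphisms, so a factorization $s \,=\, s'\circ\beta$ on $Y$ exists if and only if the lift factors as $\widetilde{s}\,=\,\widetilde{s}'\circ dF$ for some $\widetilde{s}'\,:\, F^*\Theta_P\,\longrightarrow\,\O_{f^*P}^\g$ (the $G$-equivariance of $\widetilde{s}'$ and its descent to the desired $s'$ being automatic from that of $\widetilde s$ and the equivariance of $dF$). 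Dualizing, $\widetilde{s}$ factors through $dF$ precisely when $\omega$ lies in the image of $(dF)^*\,:\, F^*\Omega_P\,\longrightarrow\,\Omega_{f^*P}$, i.e.\ in the sub-sheaf of condition (b). This establishes (a) $\Leftrightarrow$ (b).

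For (b) $\Leftrightarrow$ (c) I would use the elementary duality that, for a homomorphism of vector bundles, the image of the transpose is the annihilator of the kernel whenever the image is a sub-bundle. The horizontal distribution of $D$ is $\ker(\omega)\,=\,\ker(\widetilde s)$, so (c) says exactly that $\omega$ annihilates $\ker(dF)\,\subset\,\Theta_{f^*P}$; and $\omega$ annihilates $\ker(dF)$ iff it factors through $\Theta_{f^*P}/\ker(dF)\,\cong\,\operatorname{im}(dF)$, which is the content of $\omega\,\in\,\operatorname{im}\big((dF)^*\big)$ once $\operatorname{im}(dF)$ is locally a direct summand. Applying the snake lemma to \eqref{B2}, whose left-hand columns coincide, gives $\ker(dF)\,\cong\,\ker(\Pi^* df)$ and $\operatorname{coker}(dF)\,\cong\,\operatorname{coker}(\Pi^* df)$, so the requisite sub-bundle statement is controlled entirely by $df$.

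It remains to bring in the local descriptions. Fixing a trivializing cover $\{X_\alpha\}$ of $P$, the connection $1$-form of $f^*P$ in the induced trivialization is $D_\alpha\,\in\,\Gamma(f^{-1}(X_\alpha),\,\Omega_Y\otimes\g)$, and under the identifications above the intrinsic conditions (b)/(c) restrict on $f^{-1}(X_\alpha)$ to the requirement that $D_\alpha$ annihilate $\ker(df)$, equivalently $D_\alpha\,\in\,\Gamma\big(f^{-1}(X_\alpha),\,(df)^*(f^*\Omega_X)\otimes\g\big)$, which is precisely \eqref{coc}; hence (c) $\Leftrightarrow$ (d). The implication (d) $\Rightarrow$ (e) is immediate once one such cover is known to exist. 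For (e) $\Rightarrow$ (d) I would use that the sub-sheaf $(df)^*(f^*\Omega_X)\otimes\g$ is intrinsic, together with the transformation law \eqref{B}: passing between trivializations changes $D_\alpha$ by $\Ad_{g_{\alpha\beta}}$ and by the additive term $f^*\big(dg_{\alpha\beta}\,g_{\alpha\beta}^{-1}\big)$; as $g_{\alpha\beta}$ is a function on $X$, this additive term is the $(df)^*$-image of a form on $X$ and so already lies in the sub-sheaf. Thus membership of $D_\alpha$ in the sub-sheaf is independent of the trivialization and stable under refinement, so validity on one cover propagates to all, closing the cycle.

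The only genuinely delicate point is the sub-bundle hypothesis underlying (b) $\Leftrightarrow$ (c): when $f$ is not a submersion, $\operatorname{im}(dF)$ (equivalently $\operatorname{im}(df)$) may drop rank, and $\operatorname{im}\big((dF)^*\big)$ need not coincide on the nose with the annihilator of $\ker(dF)$ as sub-sheaves. I expect this to be the main thing to watch, and it is most cleanly handled by reducing, via (d)/(e), to the concrete linear-algebra statement in a trivialization, where both conditions become the single pointwise requirement that $D_\alpha$ kill $\ker(df)$, and then invoking torsion-freeness to extend the agreement of the two sub-sheaves across the degeneracy locus.
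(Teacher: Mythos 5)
The paper gives no proof of Lemma \ref{lemma1} (it is explicitly omitted as straightforward), so there is no argument of the authors' to compare against; judged on its own, your proposal follows the natural route --- funnelling everything through the equivariant $1$-form on $f^*P$, using that \eqref{B2} is the $\Pi$-pullback of \eqref{B1}, and checking via \eqref{B} that membership of $D_\alpha$ in the sub-sheaf $(df)^*(f^*\Omega_X)\otimes\g$ is independent of the trivialization --- and it is correct in the case that actually occurs in the paper, namely $f$ a submersion (cf.\ Proposition \ref{prop1}, where $f$ is assumed smooth). In that case $dF$ and $\beta$ are surjective, $\ker(dF)$ is a sub-bundle, $(dF)^*$ embeds $F^*\Omega_P$ as the annihilator of $\ker(dF)$, and the factorization $s = s'\circ\beta$ is unique when it exists, so the equivariance and descent of your $\widetilde{s}'$ really are automatic. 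Your treatment of (d)$\Leftrightarrow$(e) via the transformation law is exactly right.

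The one place I would push back is your final paragraph. In the non-submersive case the difficulty is not merely that $\operatorname{im}(dF)$ may fail to be a sub-bundle, and ``torsion-freeness'' will not rescue the equivalence: take $X=Y=\C$, $f(z)=z^2$, $P$ trivial, and $D_\alpha = a(z)\,dz$. Read as a sheaf kernel, $\ker(df)=0$ (multiplication by $2z$ is injective on $\Theta_Y$), so (c) is vacuous, whereas \eqref{coc} demands $z\mid a(z)$; the two conditions genuinely differ unless one interprets the horizontal-distribution condition (c) pointwise, fibre by fibre, rather than sheaf-theoretically. Likewise, when $dF$ is not surjective the lift $\widetilde{s}'$ in your (b)$\Rightarrow$(a) step is no longer unique, so its $G$-equivariance (hence descent to the required global $s'$) needs a separate argument rather than being ``automatic''. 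Since every application in the paper has $f$ a surjective submersion, the clean fix is simply to prove the lemma under that hypothesis (or to fix the pointwise reading of (c)); as written, your closing reduction does not close the gap in full generality.
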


\section{The universal pullback $\lambda$-connection}\label{s:unpb}

Let $X$ be as before.

\subsection{Construction and properties of the universal pullback $\lambda$-connection} \label{s:univlamcxn}

The following can be seen by taking \v{C}ech or Dolbeault representatives for cohomology.

\begin{lem}\label{1}
Let
\begin{align} \label{C}
0 \,\longrightarrow\, U \,\longrightarrow\, V \,
\stackrel{\sigma}{\longrightarrow}\, W \,\longrightarrow\, 0
\end{align}
be a short exact sequence of holomorphic vector bundles over $X$. Let $s \,\in\, H^0(X,\, W)$ and $V_s \,:=\, \sigma^{-1}( s(X))$. Then $V_s\,\longrightarrow\, X$ is an affine bundle for $U$. Furthermore, such bundles are classified by $H^1(X,\, U)$: if $\alpha \,\in\, H^1(X, \,U\otimes W^\vee)$ is the extension class of \eqref{C}, then $V_s$ corresponds to $\langle \alpha,\, s \rangle\,\in\, H^1(X,\, U)$, where $\langle \, , \rangle \,:\, H^1(X, \,U\otimes W^\vee) \otimes H^0(X, \,W) \,\longrightarrow\, H^1(X,\, U)$ is the homomorphism induced by the evaluation homomorphism $(U\otimes W^\vee)\otimes W\,\longrightarrow\, U$.
\end{lem}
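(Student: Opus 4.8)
The plan is to prove all three assertions by working with \v{C}ech cocycles on a single common trivializing cover, exactly as the parenthetical hint suggests. First I would establish the torsor structure fibrewise and then locally. The inclusion $U \hookrightarrow V$ makes $U$ act on $V$ by translation, and since $U \,=\, \ker\sigma$, this action preserves each fibre $\sigma^{-1}(s(x))$; as $\sigma$ is fibrewise surjective with kernel $U_x$, the fibre $(V_s)_x \,=\, \sigma^{-1}(s(x))$ is a coset of $U_x$ in $V_x$, hence a torsor for $U_x$. For local triviality, I would choose an open cover $\{X_\alpha\}$ over which \eqref{C} splits; over each such $X_\alpha$ the surjection $\sigma$ admits a holomorphic splitting $\rho_\alpha \,:\, W|_{X_\alpha} \,\longrightarrow\, V|_{X_\alpha}$ with $\sigma \circ \rho_\alpha \,=\, \Id$, and then $\tau_\alpha \,:=\, \rho_\alpha(s|_{X_\alpha})$ is a holomorphic section of $V_s$ over $X_\alpha$. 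Taking $\tau_\alpha$ as origin identifies $V_s|_{X_\alpha}$ with $U|_{X_\alpha}$, so $V_s \,\longrightarrow\, X$ is an affine bundle modelled on $U$.

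Next I would invoke the standard \v{C}ech classification of such affine bundles. The differences $\tau_\alpha - \tau_\beta$ lie in $\Gamma(X_{\alpha\beta},\, U)$, because $\sigma(\tau_\alpha - \tau_\beta) \,=\, s - s \,=\, 0$, and $\{\tau_\alpha - \tau_\beta\}$ is a $1$-cocycle whose class in $H^1(X,\, U)$ is independent of the choices made: replacing the $\tau_\alpha$ by another system of local sections of $V_s$ alters the cocycle by a coboundary. This class vanishes precisely when $V_s$ admits a global section, and it is the complete invariant classifying $V_s$ as a $U$-affine bundle. This yields the first two assertions.

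The crux is to identify this class with $\langle \alpha,\, s\rangle$, and here the same splittings $\rho_\alpha$ do double duty. Since $\sigma(\rho_\alpha - \rho_\beta) \,=\, 0$, the differences $\rho_\alpha - \rho_\beta$ are sections of $U \otimes W^\vee$ (the sheaf $\Hom(W,\, U)$) over $X_{\alpha\beta}$, and the cocycle $\{\rho_\alpha - \rho_\beta\}$ represents the extension class $\alpha \,\in\, H^1(X,\, U \otimes W^\vee)$. Pairing against $s \,\in\, H^0(X,\, W)$ via the evaluation homomorphism amounts to applying each $\rho_\alpha - \rho_\beta$ to $s$, so $\langle \alpha,\, s\rangle$ is represented by
$$
\{(\rho_\alpha - \rho_\beta)(s)\} \,=\, \{\rho_\alpha(s) - \rho_\beta(s)\} \,=\, \{\tau_\alpha - \tau_\beta\}\, .
$$
Since the right-hand side is precisely the cocycle classifying $V_s$, this gives the desired equality $[V_s] \,=\, \langle\alpha,\, s\rangle$ in $H^1(X,\, U)$.

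I expect the only real obstacle to be bookkeeping rather than substance: verifying that the classifying cocycle of $V_s$ is well-defined up to coboundary, and matching the sign and evaluation conventions implicit in the pairing $\langle\,,\,\rangle$. The conceptual heart is the single observation that the local lift $\rho_\alpha(s)$ of $s$ simultaneously serves as a local section of $V_s$ and as the term appearing in the evaluation pairing, which forces the two cocycles to coincide on the nose. A Dolbeault version would run identically, replacing the local splittings by a global $C^\infty$ splitting and the \v{C}ech differential by $\dbar$.
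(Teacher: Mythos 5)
Your proposal is correct and is precisely the argument the paper intends: the paper omits the proof entirely, offering only the remark that the lemma ``can be seen by taking \v{C}ech or Dolbeault representatives for cohomology,'' and your \v{C}ech computation with local splittings $\rho_\alpha$ and the identity $(\rho_\alpha - \rho_\beta)(s) = \tau_\alpha - \tau_\beta$ is the standard way to carry that out. No discrepancies to report.
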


Again, let $P$ be a holomorphic principal $G$-bundle over $X$. Apply Lemma \ref{1} to the Atiyah sequence \eqref{A} tensored by $\Omega_X$; the last term will be $\Theta_X \otimes \Omega_X = \End \Omega_X$, which has a sub-bundle $\O_X \hookrightarrow \End \Omega_X$ defined by 
\begin{align*}
f \,\longmapsto\, f \cdot{\rm Id}_{\Omega_X}\, .
\end{align*}
We let $W \,:=\, W_P$ be its preimage under the surjection $\At P \otimes \Omega_X \,\longrightarrow\, \End \Omega_X$. Hence we get a diagram
\begin{align} \label{e:AtOmega}
\vcenter{ \xymatrix{
0 \ar[r] & \ad P \otimes \Omega_X \ar[r] \ar@{=}[d] & \At P \otimes \Omega_X \ar[r]^\sigma & \End \Omega_X \ar[r] & 0 \\
0 \ar[r] & \ad P \otimes \Omega_X \ar[r] & W_P \ar[r]^\sigma \ar@{^{(}->}[u] & \O_X \ar[r] \ar@{^{(}->}[u] & 0.
} }
\end{align}
Now set
\begin{equation}\label{wp}
Z^\circ \,=\, Z_P^\circ \,:=\, \Tot(W_P)\, .
\end{equation}
Although the restriction
of $\sigma$ to $W_P$ is also denoted by $\sigma$, this should not cause any confusion.

Using the expressions in \eqref{B}, there exist isomorphisms $$\widehat{\phi}_\alpha \,:\, (\At P \otimes \Omega)|_{X_\alpha} \,\stackrel{\sim}{\longrightarrow}\, (\Omega_\alpha \otimes \g) \oplus \End \Omega_\alpha, $$ 
where $\Omega_\alpha := \Omega_X|_{X_\alpha}$, such that
\begin{align*}
\widehat{\phi}_\alpha \circ \widehat{\phi}_\beta^{-1}(\xi, \gamma) \,= \,\left( \Ad_{g_{\alpha \beta}} \xi + \gamma( dg_{\alpha \beta} g_{\alpha \beta}^{-1} ), \gamma \right);
\end{align*}
where $\gamma \in \End \Omega_X$ is applied to the $\Omega_X$-factor of $dg_{\alpha \beta} g_{\alpha \beta}^{-1}$. Similarly, one has isomorphisms for $W$:
\begin{align} \label{e:Wtriv}
\widetilde{\phi}_\alpha & \,:\, W|_{X_\alpha} \,\stackrel{\sim}{\longrightarrow}\,
(\Omega_\alpha \otimes \g) \oplus \O\, , & \widetilde{\phi}_\alpha \circ \widetilde{\phi}_\beta^{-1}( \xi, \lambda) \,= \,\left( \Ad_{g_{\alpha \beta}} \xi + \lambda \cdot dg_{\alpha \beta} g_{\alpha \beta}^{-1} , \lambda \right)\, .
\end{align}

Let $q \,:\, Z^\circ \,\longrightarrow\, X$ be the projection
(see \eqref{wp}), and set $Z_\alpha^\circ \,:=\,
q^{-1}(X_\alpha)$. One also has a projection $\pi^\circ \,=\, \lambda \,:\, Z^\circ
\,\longrightarrow\, \C$ given by the composition
\begin{align*}
Z^\circ \,=\, \Tot(W_P) \,\stackrel{\sigma}{\longrightarrow}\, \Tot(\O_X) \,=\, X \times \C
\,\longrightarrow\, \C\, .
\end{align*}
(The reason for the notation $\pi^\circ \,=\, \lambda$ should be made clear through our usage here
and later in Section \ref{s:twistor}.) For each $\alpha$, write $q_\alpha \,:\, \Tot(\Omega_\alpha
\otimes \g) \,\longrightarrow\, X_\alpha$ for the projection. We also obtain projection maps 
\begin{align*}
\Tot\big( (\Omega_\alpha \otimes \g) \oplus \O \big) \,\cong\, \Tot(\Omega_\alpha \otimes \g)
\times \C \,\longrightarrow\, \Tot(\Omega_\alpha \otimes \g)\, .
\end{align*}
Combining these with the isomorphisms induced by \eqref{e:Wtriv}, we get a diagram
\begin{align} \label{e:Zalphatot}
\vcenter{ \xymatrix{
Z_\alpha^\circ \ar[dr]_q \ar[r]^-{\widetilde{\phi}_\alpha} 
& \Tot\left( (\Omega_\alpha \otimes \g) \oplus \O \right) \ar[d] \ar[r] & \Tot(\Omega_\alpha \otimes \g) \ar[dl]^{q_\alpha} \\
& X_\alpha & 
} }
\end{align}
In \eqref{e:Zalphatot}, the composition across the top will be denoted by $\rho_\alpha$.
Let $\widetilde{q} \,:\, \Tot(\Omega_X \otimes \g) \,\longrightarrow\, X$ be the natural
projection from the total space of $\Omega_X \otimes \g$. There is a tautological section
\begin{align*}
a \,\in\, \Gamma( \Tot(\Omega_X \otimes \g),\, \widetilde{q}^*(\Omega_X \otimes \g))\, ,
\end{align*}
which one may of course restrict to each $\widetilde{q}^{-1}(X_\alpha) \,=\,
\Tot(\Omega_\alpha \otimes \g)$. Then set
\begin{align} \label{e:lambdacxnform}
A_\alpha \,:=\, \rho_\alpha^* a \in \Gamma(Z_\alpha^\circ, \,q^* \Omega_\alpha \otimes \g)
\,\subseteq \,\Gamma(Z_\alpha^\circ,\, \Omega_{Z_\alpha^\circ} \otimes \g)\, .
\end{align}
These satisfy
\begin{align} \label{e:lambdacxntfm}
A_\alpha \,=\, g_{\alpha \beta} A_\beta g_{\alpha \beta}^{-1} - \lambda \, dg_{\alpha \beta}
\cdot g_{\alpha \beta}^{-1}\, ,
\end{align}
where $g_{\alpha \beta}$ of course means $q^* g_{\alpha \beta}$ and $\lambda \,:\,
Z_\alpha^\circ \,\longrightarrow\, \C$ is the projection described above.

For $\lambda \,\in \,\C$, consider the constant section 
$\lambda \,\in \,H^0(X,\, \O_X)$ (this may be identified with $\lambda \cdot \Id_{\Omega_X} \,\in\,
H^0(X,\, \End \Omega_X)$ in the diagram \eqref{e:AtOmega}). We then set
\begin{align} \label{e:ClamDef}
C_\lambda(P) \,:= \, (\pi^\circ)^{-1}(\lambda) \, \subset\, Z^\circ\, ,
\end{align}
where the right hand side uses the notation of Lemma \ref{1}. If $P$ is understood from the
context, we will shorten this to $C_\lambda$, and if
$\lambda \,=\, 1$ we shorten it further still to $C$. The natural projection
\begin{equation}\label{pi}
q\, :\, C_\lambda \,\longrightarrow\, X\, ,
\end{equation}
which is a surjective submersion, makes $C_\lambda$ an affine bundle over $X$ with bundle of translations $\ad P \otimes \Omega_X$; in the case $\lambda = 0$, one has simply $C_0 \,=
\, \Tot(\ad P \otimes \Omega_X)$. The space $C_\lambda(P)$ is the base of the \emph{universal pullback $\lambda$-connection} on $P$ for the following reason.

\begin{prop}\label{prop1}
Fix $\lambda \,\in\, \C$. The space $q \,:\, C_\lambda(P) \,\longrightarrow\, X$ is an affine bundle over $X$ with $\ad P \otimes \Omega_X$ as its bundle of translations. The pulled back bundle $q^*P$ under the map in \eqref{pi} admits a canonical $\lambda$-connection $\nabla$ which is trivial on the fibres of $q$. Furthermore, it has the universal property that if $f \,:\, Y \,\longrightarrow\, X$ is any smooth morphism such that $f^*P$ admits a $\lambda$-connection $D$ trivial on the fibres of $f$, then there exists a unique morphism $\psi \,:\, Y \,\longrightarrow\, C_\lambda$ such that
\begin{align*}
\xymatrix{ Y \ar[r]^\psi \ar[dr]_f & C_\lambda \ar[d]^q \\ & X }
\end{align*}
commutes and $(f^*P,\, D) \,\cong\, \psi^*( q^* P,\, \nabla)$.
\end{prop}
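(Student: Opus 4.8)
The plan is to establish the three assertions in turn, leaning on Lemma \ref{1} for the affine-bundle structure, on the explicitly constructed local $1$-forms for the tautological $\lambda$-connection, and on Lemma \ref{lemma1} to repackage the universal property in terms of connection forms. First I would dispense with the affine-bundle claim: it is exactly the content of Lemma \ref{1} applied to the lower short exact sequence of \eqref{e:AtOmega}, namely $0 \to \ad P \otimes \Omega_X \to W_P \xrightarrow{\sigma} \O_X \to 0$, together with the constant section $\lambda \in H^0(X, \O_X)$, since $C_\lambda(P) = \sigma^{-1}(\lambda(X))$ by \eqref{e:ClamDef}. Next, to produce the canonical $\lambda$-connection $\nabla$ on $q^*P$, I would restrict the forms $A_\alpha$ of \eqref{e:lambdacxnform} to $C_\lambda \subset Z^\circ$. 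On $C_\lambda$ the function $\pi^\circ = \lambda$ is constant, so the transformation rule \eqref{e:lambdacxntfm} becomes precisely the cocycle condition for the local $1$-forms of a $\lambda$-connection on $q^*P$; hence the $A_\alpha$ glue to a $\lambda$-connection $\nabla$. That $\nabla$ is trivial on the fibres of $q$ is then immediate: by \eqref{e:lambdacxnform} each $A_\alpha$ is a section of $q^*\Omega_\alpha \otimes \g$, which is exactly condition (e) of Lemma \ref{lemma1} with $Y = C_\lambda$ and $f = q$.

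The substance of the proposition is the universal property. Given $f \colon Y \to X$ and a $\lambda$-connection $D$ on $f^*P$ trivial on the fibres of $f$, Lemma \ref{lemma1}(d) provides local connection $1$-forms $D_\alpha \in \Gamma(f^{-1}(X_\alpha),\, (df)^*(f^*\Omega_X) \otimes \g)$; being horizontal for $f$, each $D_\alpha$ may be read at a point $y$ as an element of $(\Omega_X \otimes \g)_{f(y)}$. I would define $\psi$ locally by $\psi_\alpha(y) := \widetilde\phi_\alpha^{-1}(D_\alpha(y),\, \lambda)$, using the trivialization \eqref{e:Wtriv}; the second coordinate being the constant $\lambda$ guarantees that $\psi_\alpha$ lands in $C_\lambda$, and by construction $q \circ \psi_\alpha = f$. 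The key verification is that these local maps agree on overlaps: the $D_\alpha$ obey the same $\lambda$-connection transformation law as in \eqref{e:lambdacxntfm}, which must be matched against the cocycle \eqref{e:Wtriv} for $W_P$, so that $\widetilde\phi_\alpha^{-1}(D_\alpha,\lambda) = \widetilde\phi_\beta^{-1}(D_\beta,\lambda)$ and the $\psi_\alpha$ glue to a global $\psi \colon Y \to C_\lambda$ over $X$.

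It then remains to identify $\psi^*(q^*P,\, \nabla)$ with $(f^*P,\, D)$ and to prove uniqueness. Since $q\circ\psi = f$, one has $\psi^*q^*P = f^*P$ canonically; and because $a$ is tautological, the local form $\psi_\alpha^* A_\alpha = (\rho_\alpha\circ\psi_\alpha)^*a$ reads off precisely the $\Omega_X \otimes \g$-coordinate recorded by $\psi_\alpha$, namely $D_\alpha$, so $\psi^*\nabla = D$ chart by chart. For uniqueness, any $\psi'$ over $X$ with $\psi'^*\nabla = D$ must have $q\circ\psi' = f$, fixing its $X$-component, while its fibre-component is forced to equal $D_\alpha$ in each chart by the same tautological reading of $\nabla$; hence $\psi' = \psi$.

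I expect the main obstacle to be the gluing step, where one must carefully reconcile the sign and ordering conventions of the $\lambda$-connection transformation law \eqref{e:lambdacxntfm} with the $W_P$-cocycle \eqref{e:Wtriv} (the two differ by the usual $g_{\alpha\beta}\leftrightarrow g_{\beta\alpha}$ and $dg\,g^{-1}\leftrightarrow g^{-1}dg$ bookkeeping), so that the $\psi_\alpha$ genuinely descend. A cleaner, convention-free alternative that I would fall back on is to observe that morphisms $Y \to C_\lambda$ over $X$ are the same as sections of the affine bundle $f^*C_\lambda \to Y$, which by Lemma \ref{1} and Definition \ref{d:trivfib} are in natural bijection with $\lambda$-connections on $f^*P$ trivial on the fibres of $f$; the tautological $\nabla$ then realizes this bijection functorially, yielding both existence and uniqueness of $\psi$ at once.
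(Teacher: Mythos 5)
Your proposal is correct and follows essentially the same route as the paper: Lemma \ref{1} for the affine-bundle structure, restriction of the tautological forms $A_\alpha$ to $C_\lambda$ (with \eqref{e:lambdacxntfm} at constant $\lambda$) for the canonical connection and its fibrewise triviality, and local definition of $\psi$ from the $D_\alpha$ with gluing via the matching transformation laws. The paper merely packages your pointwise formula $\psi_\alpha(y)=\widetilde\phi_\alpha^{-1}(D_\alpha(y),\lambda)$ through an auxiliary lemma on tautological sections of vector bundles (Lemma \ref{2}), which also delivers the uniqueness statement you argue by hand.
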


\begin{proof}
The first statement is Lemma \ref{1}.

As for $Z^\circ$, we write $C_{\lambda, \alpha} \,:=\, q^{-1}(X_\alpha) \cap C_\lambda$. Since $C_\lambda \,= \,(\pi^\circ)^{-1}(\lambda)$, the isomorphisms $\widetilde{\phi}_\alpha$ in \eqref{e:Zalphatot} restrict to isomorphisms $C_{\lambda, \alpha}\,\longrightarrow\, \Tot(\Omega_\alpha \otimes \g) \times \{ \lambda \} \,=\, \Tot(\Omega_\alpha \otimes \g)$, which we also denote by $\rho_\alpha$.

The local $1$-forms for the $\lambda$-connection are of course the $A_\alpha$ defined in \eqref{e:lambdacxnform} restricted to $C_{\lambda, \alpha}$; the fact that they yield a well-defined $\lambda$-connection comes from \eqref{e:lambdacxntfm}, noting that now $\lambda$ is a fixed constant. The fact that the resulting $\nabla$ is trivial on the fibres can be seen from \eqref{e:lambdacxnform} and from Lemma \ref{lemma1}.

Suppose now that $f \,:\, Y \,\longrightarrow\, X$ is smooth and there is a $\lambda$-connection 
$D$ on $f^* P$ trivial on the fibres. If $Y_\alpha \,:= \,f^{-1}(X_\alpha)$, then $D$ has 
$\lambda$-connection $1$-forms $D_\alpha\,\in\, \Gamma(Y_\alpha,\, f^* \Omega_X \otimes \g)$. We 
then use the following.

\begin{lem} \label{2}
Let $p\, :\, E\,\longrightarrow\, X$ be a vector bundle and $\sigma \,\in\, H^0( \Tot E,\, p^* E)$ the tautological section. If $f \,:\, Y \,\longrightarrow\, X$ is any morphism and $s \,\in\, H^0(Y,\, f^* E)$, then there exists a unique morphism $\tau \,:\, Y \, \longrightarrow\, \Tot E$ such that 
\begin{align*}
\xymatrix{ & \Tot E \ar[d]^p \\ Y \ar[r]_f \ar[ur]^\tau & X }
\end{align*}
commutes and $s \,=\, \tau^*(\sigma)$.
\end{lem}

Applying this, we get morphisms
$\tau_\alpha \,:\, Y_\alpha \,\longrightarrow\, \Tot(\Omega_\alpha \otimes \g)$ and hence $$\psi_\alpha \,:=\, \rho_\alpha^{-1} \circ \tau_\alpha \,:\, Y_\alpha \,\longrightarrow\, C_{\lambda, \alpha}$$ such that
\begin{align*}
\xymatrix{ Y_\alpha \ar[r]^{\psi_\alpha} \ar[dr]_f & C_{\lambda, \alpha} \ar[d]^q \\ & X }
\end{align*}
commutes. Using the fact that the $D_\alpha$ and $A_\alpha$ both transform as $\lambda$-connection $1$-forms, one then
sees that $\psi_\alpha \,= \,\psi_\beta$ on $Y_\alpha \cap Y_\beta$ and this defines $\psi \,:\, Y\,\longrightarrow\,
C_\lambda$. The uniqueness is clear from the uniqueness statement in Lemma \ref{2}.
\end{proof}

Of course, since $Z^\circ$ is the total space of a vector bundle, there is a scalar multiplication map
\begin{align*}
\C \times Z^\circ \,\longrightarrow\, Z^\circ\, ,
\end{align*}
for which 
\begin{align*}
\pi^\circ (\mu \cdot z) \,=\, \mu \pi^\circ(z)
\end{align*}
for all $\mu\,\in\, \C$, $z\,\in\, Z^\circ$, simply because $\pi^\circ$ is defined as a vector bundle map, hence linear, followed by the projection to the fibre of $\O_X$. Of course, the map restricts to a $\C^\times$ group action
\begin{align*}
\C^\times \times Z^\circ \,\longrightarrow\, Z^\circ\, .
\end{align*}
If we set $Z^\times \,:=\, (\pi^\circ)^{-1}(\C^\times)$ then $Z^\times$ is a $\C^\times$-invariant
open subset of $Z^\circ$ and the action on $Z^\times$ is free. In fact, one has an isomorphism
\begin{align*}
Z^\times \,\stackrel{\sim}{\longrightarrow}\, C \times \C^\times\, ,
\end{align*}
where $C \,=\, (\pi^\circ)^{-1}(1)$, given by
\begin{align} \label{e:gammadef}
z \,\longmapsto\, \big( \pi^\circ(z)^{-1} \cdot z, \pi^\circ(z) \big)
\end{align}
with inverse
\begin{align*}
(y, \,\lambda) \,\longmapsto\, \lambda \cdot y\, .
\end{align*}
In particular, for any $\lambda_1$, $\lambda_2 \in \C^\times$, one has
\begin{align*}
C_{\lambda_1} \,\cong\, C_{\lambda_2}\, .
\end{align*}
The commutativity of the diagram \eqref{e:ZYC} below is clear from the maps just described.

\begin{lem} \label{t:Zstariso}
One has an isomorphism
\begin{align*}
\gamma \,:\, Z^\times \,\stackrel{\sim}{\longrightarrow}\, C \times \C^\times\, ,
\end{align*}
which makes the diagram
\begin{align} \label{e:ZYC}
\vcenter{ \xymatrix{ Z^\times \ar[dr]_{\pi^\circ} \ar[r]^-\gamma & C \times \C^\times \ar[d]^{p_{\C^\times}} \\ & \C^\times } }
\end{align}
commute, where $p_{\C^\times}$ is the projection onto the $\C^\times$ factor.
\end{lem}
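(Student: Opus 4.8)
The plan is to verify directly that the explicitly defined map $\gamma$ from \eqref{e:gammadef}, together with the inverse already written down there, furnishes the desired biholomorphism. The one essential ingredient is the homogeneity relation $\pi^\circ(\mu \cdot z) \,=\, \mu\, \pi^\circ(z)$ recorded just above the lemma, which holds because $\pi^\circ$ factors as a fibrewise-linear vector bundle map followed by projection onto the fibre of $\O_X$. Everything else is a matter of substituting this identity into the stated formulas.

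First I would check that $\gamma$ lands in $C \times \C^\times$. For $z \,\in\, Z^\times$ we have $\pi^\circ(z) \,\in\, \C^\times$ by definition of $Z^\times$, so the scalar $\pi^\circ(z)^{-1}$ is legitimate and the second coordinate of $\gamma(z)$ already lies in $\C^\times$. For the first coordinate, homogeneity gives $\pi^\circ\big(\pi^\circ(z)^{-1} \cdot z\big) \,=\, \pi^\circ(z)^{-1}\,\pi^\circ(z) \,=\, 1$, so $\pi^\circ(z)^{-1} \cdot z \,\in\, (\pi^\circ)^{-1}(1) \,=\, C$, as required. Symmetrically, for $(y,\, \lambda) \,\in\, C \times \C^\times$ the point $\lambda \cdot y$ satisfies $\pi^\circ(\lambda \cdot y) \,=\, \lambda\,\pi^\circ(y) \,=\, \lambda \,\in\, \C^\times$, hence lies in $Z^\times$; thus the candidate inverse $(y,\,\lambda) \,\longmapsto\, \lambda \cdot y$ is well-defined.

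Next I would confirm that the two maps are mutually inverse, again by a single application of homogeneity in each direction. Starting from $(y,\,\lambda)$, one computes $\pi^\circ(\lambda \cdot y) \,=\, \lambda$ and recovers $\lambda^{-1} \cdot (\lambda \cdot y) \,=\, y$ from associativity of the scalar action; starting from $z$, one recovers $\pi^\circ(z) \cdot \big(\pi^\circ(z)^{-1} \cdot z\big) \,=\, z$ likewise. Holomorphicity of both maps is then immediate, since each is assembled from the holomorphic scalar-multiplication map $\C \times Z^\circ \,\longrightarrow\, Z^\circ$, the holomorphic function $\pi^\circ$, and inversion on $\C^\times$; hence $\gamma$ is a biholomorphism. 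Finally, commutativity of \eqref{e:ZYC} is the identity $p_{\C^\times}\big(\gamma(z)\big) \,=\, \pi^\circ(z)$, which holds by inspection because the second coordinate of $\gamma(z)$ is exactly $\pi^\circ(z)$.

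I do not expect a genuine obstacle here: once the homogeneity of $\pi^\circ$ is in hand, every assertion reduces to a direct substitution, and indeed the preceding discussion has already flagged the commutativity as clear. The only point requiring minor care is to record explicitly that $\gamma$ and its inverse are \emph{holomorphic}, rather than merely set-theoretic bijections, so that the claimed isomorphism is one of complex manifolds over $\C^\times$.
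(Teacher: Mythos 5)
Your proposal is correct and follows exactly the route the paper intends: the paper gives the explicit formulas for $\gamma$ and its inverse immediately before the lemma, notes the homogeneity $\pi^\circ(\mu\cdot z)=\mu\,\pi^\circ(z)$, and then simply declares the commutativity of \eqref{e:ZYC} ``clear from the maps just described,'' omitting the verification you have written out. Your only addition is to make explicit the routine checks (that the first coordinate lands in $C$, mutual inverseness, and holomorphicity), which is a faithful elaboration rather than a different argument.
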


\subsection{Further remarks in the case of $\C^\times$-bundles}\label{s:Cstarbdls}

Let $P$ be a holomorphic principal $\C^\times$-bundle on $X$. In this case, $\ad P \,=\,
\O_X$ and hence the lower sequence of \eqref{e:AtOmega} simplifies to
\begin{align} \label{e:Cstarext}
0 \longrightarrow \Omega_X \longrightarrow W_P \longrightarrow \O_X \longrightarrow 0.
\end{align}
As before, $Z^\circ\,:=\, \Tot(W_P)$ and we have a map $\pi^\circ \,:\, Z^\circ \,
\longrightarrow\, \C$ yielding an exact sequence
\begin{align} \label{e:ZrelC}
0 \longrightarrow (\pi^\circ)^* \Omega_\C \longrightarrow \Omega_{Z^\circ} \longrightarrow \Omega_{Z^\circ/\C} \longrightarrow 0,
\end{align}
with $\Omega_{Z^\circ/\C}$ the sheaf of differentials relative to $\pi^\circ$.

The local $1$-forms on $Z_\alpha^\circ$ defined in \eqref{e:lambdacxnform} are now scalar $1$-forms and by \eqref{e:lambdacxntfm} they transform as
\begin{align} \label{e:Cstar1forms}
A_\alpha \,=\, A_\beta - \lambda \cdot dg_{\alpha \beta} g_{\alpha \beta}^{-1}\, ;
\end{align}
since $\C^\times$ is abelian, the conjugation action is trivial. Under the epimorphism in
\eqref{e:ZrelC}, we may consider the $A_\alpha$ as relative $1$-forms. If $d_\C$ denotes the
relative exterior differential (thus treating functions pulled back from $\C$ under $\pi^\circ$ as ``constant''),
then applying it to both sides of \eqref{e:Cstar1forms}, and noting that
\begin{align*}
d_\C \left( \lambda \cdot dg_{\alpha \beta} g_{\alpha \beta}^{-1} \right)
\,=\, \lambda \cdot dg_{\alpha \beta} g_{\alpha \beta}^{-1} \wedge dg_{\alpha \beta} g_{\alpha \beta}^{-1} = 0
\end{align*}
because $dg_{\alpha \beta} g_{\alpha \beta}^{-1}$ is a scalar valued $1$-form, we
obtain a well-defined relative $2$-form $$\omega^\circ \,\in\, H^0( Z^\circ,
\,\Omega_{Z^\circ/\C}^2)$$ such that
\begin{align} \label{e:Phidef}
\omega^\circ|_{Z_\alpha^\circ} \,=\, d_\C A_\alpha\, .
\end{align}

In the case that $\lambda \,=\, 0$, so that $C_0 \,=\, T^*X$, the
equation \eqref{e:Cstar1forms} shows that the 
$A_\alpha$ already patch together to give a well-defined $1$-form $\theta$ on $T^*X$. In fact, 
from their definition \eqref{e:lambdacxnform} one sees that $\theta$ is the tautological 
$1$-form on $T^*X$. Hence $\omega^\circ|_{T^*X} \,= \,d\theta$ is the standard (holomorphic) 
symplectic form on $T^*X$.

Furthermore, since the $A_\alpha$ are locally defined from the tautological sections of $T^* X_\alpha$, we see that $\omega^\circ$ restricted to any fibre is a holomorphic symplectic form. Restricting to any fibre $C_\lambda$, we see that
\begin{align*}
(d_\C A_\alpha)|_{C_{\lambda,\alpha}} \,=\, d(A_\alpha|_{C_\lambda})\, .
\end{align*}
In particular, for $\lambda \,=\, 1$, the $A_\alpha|_C$ give the universal connection, and so $d(A_\alpha|_C)$ is the (holomorphic) curvature of the universal pullback connection.

The foregoing justifies the following proposition.

\begin{prop} \label{p:Cstarrelform}
In the case $G \,= \,\C^\times$, there exists a holomorphic relative $2$-form
$$\omega^\circ
\,\in\, H^0( Z^\circ,\, \Omega_{Z^\circ/\C}^2)$$ which restricts to a holomorphic symplectic form
on each fibre $C_\lambda$ and is the standard holomorphic symplectic form on $C_0 \,=\, T^*X$ and
is the curvature of the ($1$-)connection they define on $C_1$.
\end{prop}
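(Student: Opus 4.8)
The plan is to build $\omega^\circ$ by gluing the relative exterior derivatives of the local $1$-forms $A_\alpha$ from \eqref{e:lambdacxnform}, and then to read off the three asserted properties chart by chart. First I would regard each $A_\alpha$ as a section of the relative cotangent sheaf $\Omega_{Z^\circ/\C}$ via the epimorphism in \eqref{e:ZrelC} and apply the relative differential $d_\C$ to the transformation law \eqref{e:Cstar1forms}. The decisive computation is that the correction term is annihilated by $d_\C$: since $\lambda = \pi^\circ$ is pulled back from $\C$ and is thus treated as constant, applying $d_\C$ to $\lambda \cdot dg_{\alpha \beta} g_{\alpha \beta}^{-1}$ leaves only $\lambda \cdot (dg_{\alpha \beta} g_{\alpha \beta}^{-1} \wedge dg_{\alpha \beta} g_{\alpha \beta}^{-1})$, a scalar-valued $1$-form wedged with itself, which vanishes. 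Consequently $d_\C A_\alpha = d_\C A_\beta$ on overlaps, and the local relative $2$-forms $d_\C A_\alpha$ patch to a global $\omega^\circ \in H^0(Z^\circ, \Omega_{Z^\circ/\C}^2)$ with $\omega^\circ|_{Z_\alpha^\circ} = d_\C A_\alpha$.

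To see that $\omega^\circ$ restricts to a holomorphic symplectic form on each fibre $C_\lambda = (\pi^\circ)^{-1}(\lambda)$, I would use that the restriction of a relative $2$-form to a fibre of $\pi^\circ$ is an honest holomorphic $2$-form and that $(d_\C A_\alpha)|_{C_{\lambda,\alpha}} = d(A_\alpha|_{C_\lambda})$. Through the local isomorphism $\rho_\alpha : C_{\lambda,\alpha} \xrightarrow{\sim} \Tot(\Omega_\alpha \otimes \g)$ (recalling that here $\g = \C$, so this is $T^*X_\alpha$), the form $A_\alpha$ is by its definition $\rho_\alpha^* a$, the pullback of the tautological section, hence the Liouville $1$-form on $T^*X_\alpha$. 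Therefore $\omega^\circ|_{C_\lambda}$ agrees in each chart with the standard symplectic form $d\theta$ on $T^*X_\alpha$, which is nondegenerate; since nondegeneracy is a pointwise, local condition, $\omega^\circ|_{C_\lambda}$ is symplectic on all of $C_\lambda$.

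Finally I would specialize to the two distinguished fibres. When $\lambda = 0$ the correction term in \eqref{e:Cstar1forms} carries the scalar factor $\lambda$ and so vanishes identically; thus the $A_\alpha$ themselves already agree on overlaps and glue to a global $1$-form on $C_0 = T^*X$, which the defining formula \eqref{e:lambdacxnform} identifies as the tautological $1$-form $\theta$, whence $\omega^\circ|_{C_0} = d\theta$ is the standard holomorphic symplectic form. When $\lambda = 1$ the restrictions $A_\alpha|_C$ are precisely the local connection $1$-forms of the universal pullback connection $\nabla$ produced in Proposition \ref{prop1}, so that $\omega^\circ|_C = d(A_\alpha|_C)$ is by definition its curvature. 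I expect the only genuine obstacle to be the patching step: one must verify carefully that passing to relative forms is legitimate and that the Maurer--Cartan-type correction term is killed by $d_\C$ --- everything else is a local translation into the tautological form on a cotangent bundle.
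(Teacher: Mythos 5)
Your proposal is correct and follows essentially the same route as the paper: the paper likewise defines $\omega^\circ$ by applying $d_\C$ to \eqref{e:Cstar1forms}, kills the correction term because $dg_{\alpha\beta}g_{\alpha\beta}^{-1}$ is a scalar $1$-form wedged with itself, and then reads off the symplectic, tautological ($\lambda=0$) and curvature ($\lambda=1$) statements from the identification of $A_\alpha$ with the pullback of the tautological section. No gaps to report.
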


The following will be used later.

\begin{lem} \label{l:gammapullback}
Consider the isomorphism $\gamma \,:\, Z^\times\,\longrightarrow\, C \times \C^\times$ of
Lemma \ref{t:Zstariso} in the case $P$ is a $\C^\times$-bundle. Then if $p \,:\,
C \times \C^\times \,\longrightarrow\, C$ is the projection, and $\omega_1^\circ \,:=\,
 \omega^\circ|_C$, then
\begin{align} \label{e:gammapullback}
\omega^\circ \,=\, \gamma^*(\lambda \cdot p^* \omega_1^\circ)
\,=\, \lambda \cdot \gamma^* p^* \omega_1^\circ\, .
\end{align}
\end{lem}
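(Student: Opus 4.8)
The plan is to verify the identity $\omega^\circ = \lambda \cdot \gamma^* p^* \omega_1^\circ$ locally on each $Z_\alpha^\circ$ and then invoke the fact that the local expressions patch to a globally defined relative $2$-form (which is already established in Proposition \ref{p:Cstarrelform}). Since both sides of \eqref{e:gammapullback} are sections of $\Omega_{Z^\times/\C}^2$, it suffices to check equality after restricting to each $Z_\alpha^\times := Z_\alpha^\circ \cap Z^\times$ and comparing the local $2$-form representatives.

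First I would recall the explicit formula for $\gamma$ from \eqref{e:gammadef}, namely $z \mapsto (\pi^\circ(z)^{-1} \cdot z,\, \pi^\circ(z))$, together with the fact that on $Z^\times$ the coordinate $\lambda = \pi^\circ$ is a nowhere-vanishing function pulled back from $\C^\times$. The key observation is that the local connection $1$-forms $A_\alpha$ of \eqref{e:lambdacxnform} are \emph{linear} in the fibre coordinates of $W_P$, and in particular scale linearly under the $\C^\times$-action: under multiplication by $\mu$ the section $A_\alpha$ pulls back to $\mu \cdot A_\alpha$, since $A_\alpha$ is $\rho_\alpha^* a$ with $a$ the tautological (hence fibrewise-linear) section. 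This is the crux of the computation: the scalar multiplication map $Z^\times \to C \times \C^\times$ disentangles the fibre direction (recorded by the $C$-factor) from the scaling parameter $\lambda$ (recorded by the $\C^\times$-factor), and $A_\alpha$ is homogeneous of degree one in the scaling.

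Concretely, under $\gamma$ one has $\gamma^* p^* (A_\alpha|_C) = \lambda^{-1} A_\alpha$ on $Z_\alpha^\times$, because $p \circ \gamma$ sends $z$ to the point $\pi^\circ(z)^{-1}\cdot z \in C$ where the tautological section takes the value $\lambda^{-1}$ times its value at $z$. Applying the relative exterior differential $d_\C$, which treats $\lambda = \pi^\circ$ as constant along the fibres of $\pi^\circ$, gives $\gamma^* p^* (d_\C A_\alpha|_C) = \lambda^{-1} d_\C A_\alpha = \lambda^{-1} \omega^\circ|_{Z_\alpha^\circ}$ by \eqref{e:Phidef}. Since $\omega_1^\circ = \omega^\circ|_C$ has local representative $d(A_\alpha|_C) = (d_\C A_\alpha)|_C$, rearranging yields $\omega^\circ|_{Z_\alpha^\circ} = \lambda \cdot \gamma^* p^* \omega_1^\circ$ on the overlap, which is precisely \eqref{e:gammapullback} locally.

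The main obstacle is a bookkeeping point rather than a conceptual one: one must be careful that $d_\C$ commutes with $\gamma^* p^*$ in the relative setting, i.e.\ that pulling back along $\gamma$ and then taking the relative differential agrees with the relative differential computed before pullback. This holds because $\gamma$ is a morphism over $\C^\times$ (as recorded by the commuting triangle \eqref{e:ZYC}), so it respects the relative cotangent sheaves $\Omega_{\bullet/\C}$, and the relative $d_\C$ is natural with respect to such morphisms. Once this naturality is in hand and the homogeneity $\gamma^* p^*(A_\alpha|_C) = \lambda^{-1} A_\alpha$ is verified from the definitions, the two displayed equalities in \eqref{e:gammapullback} follow immediately, the second simply being the commutativity of multiplication by the scalar function $\lambda$ with pullback.
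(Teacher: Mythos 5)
Your proof is correct and follows essentially the same route as the paper's: the paper likewise observes that $\omega^\circ$ is locally $d_\C$ of the tautological $1$-form, that $\gamma$ rescales the fibre component by $\lambda^{-1}$ (so $\gamma^*p^*(A_\alpha|_C) = \lambda^{-1}A_\alpha$), and that the factor of $\lambda$ in \eqref{e:gammapullback} exactly cancels this. Your write-up simply makes explicit the local patching and the compatibility of $d_\C$ with pullback along the morphism $\gamma$ over $\C^\times$, which the paper leaves implicit.
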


\begin{proof}
By definition \eqref{e:lambdacxnform}, \eqref{e:Phidef}, $\omega^\circ$ is locally defined as the exterior derivative of the tautological $1$-form on the cotangent bundle. Also by definition \eqref{e:gammadef}, $\gamma$ is defined by multiplying (the fibre component) by $\lambda^{-1}$ to move an arbitrary element of $Z^\times$ into $C$. Thus, the factor of $\lambda$ in \eqref{e:gammapullback} is there to cancel this out.
\end{proof}

\section{Atiyah sequences for principal bundles over homogeneous spaces} \label{s:Atseqhomogsp}

The goal in this section is to show that when $X$ is a homogeneous space, many of the vector bundle constructions in the previous section arise from bundles naturally associated to representations of the relevant groups. We begin with some results holding for general homogeneous spaces and in the latter part of the section, we will specialize to the case of (partial) flag varieties.

\subsection{Description of the universal $\lambda$-connection space} \label{ss:univconnsp}

Let $G$ be a complex connected algebraic group with Lie algebra $\g$, and let $$H 
\,\leq\, G$$ be a closed connected subgroup of $G$ with Lie algebra $\h$;
we will employ the notation ``$\leq$'' to denote a subgroup. 
Set $X \,:= \, 
G/H$. The quotient map $G\, \longrightarrow\, X$ is a holomorphic principal 
$H$-bundle; we will often write $G_H$ when $G$ is considered as this principal 
$H$-bundle.

\begin{lem}\label{le1}
The Atiyah sequence for $G_H$ is the short exact sequence of vector bundles 
on $X$ associated to the short exact sequence of $H$-representations
\begin{align} \label{D}
0 \,\longrightarrow\, \h \,\longrightarrow\, \g \,\stackrel{q}{\longrightarrow}\,
\g / \h \,\longrightarrow\, 0\, .
\end{align}
In particular, $\Theta_X \,=\, G_H \times^H (\g/\h)$. More generally, suppose $M$ is another algebraic group and $\tau \,:\, H \,\longrightarrow\, M$ is a holomorphic homomorphism. Let $G^\tau \,:=\, G_H\times^\tau M\,\longrightarrow\, X$ be the principal $M$-bundle obtained by extending the structure group of $G_H$ using $\tau$. Then the Atiyah sequence of $G^\tau$ is associated to the sequence of $H$-representations
\begin{align} \label{e:AtSeqM}
0 \,\longrightarrow\, \m \,\longrightarrow\, (\g \oplus \m) / \h \,\longrightarrow\, \g/\h
\,\longrightarrow\, 0\, .
\end{align}
Here $\m$ is an $H$-representation via $H \xrightarrow{\tau} M \xrightarrow{\Ad} {\rm GL}(\m)$, the inclusion $\h \hookrightarrow \g \oplus \m$ is
\begin{align} \label{e:hinc}
\xi \,\longmapsto\, \left( \xi,\, - d\tau(\xi) \right)\, ,
\end{align}
and the inclusion of $\m$ in $(\g \oplus \m) / \h$ is given by the natural inclusion of $\m$ in $\g \oplus \m$ followed by the quotient by $\h$. 
\end{lem}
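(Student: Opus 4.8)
The plan is to verify that the proposed short exact sequences are genuinely the ones associated to the stated $H$-representations by identifying everything at the level of the principal $H$-bundle $G_H \to X$ and invoking the associated-bundle construction. The key point throughout is that for $X = G/H$, a vector bundle associated to an $H$-representation $V$ is $G_H \times^H V = G \times^H V$, and that the Atiyah bundle of an associated principal bundle can be computed fibrewise over the identity coset $eH$, where the fibre is governed by the $H$-action on the relevant Lie algebra. First I would treat the base case \eqref{D}: the adjoint bundle $\ad G_H$ is by definition $G \times^H \h$, so I must show $\Theta_X = G \times^H (\g/\h)$ and that $\At G_H = G \times^H \g$, with the maps in \eqref{A} induced by the inclusion $\h \hookrightarrow \g$ and the quotient $q \colon \g \to \g/\h$. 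The identification $\Theta_X = G \times^H (\g/\h)$ is the standard description of the tangent bundle of a homogeneous space: left-translation by $G$ trivializes, and the isotropy representation of $H$ on $T_{eH}X$ is the adjoint action on $\g/\h$. For $\At G_H = G \times^H \g$, I would use that $\At G_H = (G \times \g)/H$ where $H$ acts on the total space $G$ of the bundle on the right and on $\g$ by the adjoint action; because $G$ is itself a group, the total space of $G_H$ is $G$, its tangent bundle is $G \times \g$ (left-trivialized), and quotienting by the $H$-action gives $G \times^H \g$. Checking that the inclusion and projection in \eqref{A} match $\h \hookrightarrow \g \xrightarrow{q} \g/\h$ under these identifications is then a diagram-chase using the left-trivialization.

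Second, for the extended bundle $G^\tau = G_H \times^\tau M$, I would compute its adjoint bundle and Atiyah bundle directly. The adjoint bundle is $\ad G^\tau = G^\tau \times^M \m = G_H \times^H \m$, where $H$ acts on $\m$ through $H \xrightarrow{\tau} M \xrightarrow{\Ad} \GL(\m)$, which matches the claimed left term of \eqref{e:AtSeqM}. For the middle term, the cleanest route is to realize $G^\tau$ as a quotient: its total space is $(G \times M)/H$ where $H$ acts on $G$ on the right and on $M$ via $h \cdot m = \tau(h) m$ (or the appropriate convention), so $\At G^\tau = \Theta_{G^\tau}/M$ can be computed as $(G \times M \times (\g \oplus \m))/(H \times M)$, and reducing the $M$-action identifies this with $G_H \times^H \big((\g \oplus \m)/\h\big)$, where $\h$ embeds via \eqref{e:hinc}. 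The sign $-d\tau(\xi)$ in \eqref{e:hinc} is exactly the condition that cuts out the tangent directions to the $H$-orbit inside $\g \oplus \m$: moving by $\xi \in \h$ in the $G$-factor is undone, in the associated bundle, by the compensating infinitesimal $M$-action $-d\tau(\xi)$ on the $\m$-factor, reflecting the equivalence relation defining $G^\tau$. I would verify that under the quotient by this $\h$, the natural inclusion $\m \hookrightarrow \g \oplus \m$ descends to an injection (since $\m \cap \h = 0$ under \eqref{e:hinc}, as the image of $\h$ meets $\m$ only in $0$) giving $\ad G^\tau$, and that the further quotient by $\m$ recovers $\g/\h = \Theta_X$, making \eqref{e:AtSeqM} exact.

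\textbf{The main obstacle} I anticipate is bookkeeping the two compatible $H$-actions and the left-versus-right conventions so that the embedding \eqref{e:hinc} comes out with the correct sign and so that the three maps in \eqref{e:AtSeqM} are checked to be the genuine Atiyah maps rather than merely abstractly isomorphic sequences. Concretely, one must confirm that the composite $\m \hookrightarrow (\g \oplus \m)/\h \twoheadrightarrow \g/\h$ is zero and that the induced sequence is short exact as $H$-representations, which amounts to the linear-algebra fact that $\h \to \g \oplus \m$ is injective with image complementary, in the appropriate sense, to $\m$; this is where the precise form of \eqref{e:hinc} is essential. A clean way to organize this, which I would adopt, is to first establish the general principle that for any principal $H$-bundle of the form $G_H$ and any extension of structure group along $\tau$, the Atiyah bundle of the extended bundle is the associated bundle to $(\g \oplus \m)/\h$ with $\h$ embedded anti-diagonally via $d\tau$, and then specialize; this reduces the whole statement to the naturality of the Atiyah construction under extension of structure group, composed with the base case \eqref{D}. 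The remaining verifications—exactness and the identification of the outer terms—are then routine $H$-equivariant linear algebra on $\g$, $\m$, and $\h$.
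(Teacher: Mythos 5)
Your proposal is correct and follows essentially the same route as the paper: left-trivialization of $\Theta_G$ to identify the Atiyah sequence of $G_H$ with \eqref{D}, then realizing $G^\tau$ as $(G\times M)/H$ and computing $\At G^\tau = \Theta_{G^\tau}/M$ to arrive at $(\g\oplus\m)/\h$ with $\h$ embedded anti-diagonally via \eqref{e:hinc}. The one convention issue you flag as the main obstacle is exactly the one the paper handles explicitly—it trivializes the $M$-factor by \emph{right} translation $dR_b$ (since $H$ acts on $M$ by left multiplication through $\tau(h)^{-1}$), which introduces an $\Ad_b$ in the vertical inclusion that cancels upon taking the quotient by $M$.
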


\begin{proof}
We identify $\Theta_G \,=\, G \times \g$ via $(g,\, \xi) \,\longmapsto\, dL_g \xi \in T_g G$, where
$L_g \,:\, G \,\longrightarrow\, G$ is left multiplication by $g \,\in\, G$. Then it is
straightforward to check that the trivial sub-bundle $G \times \h \,\hookrightarrow\, G \times \g$
on $G$ is precisely the kernel of the differential $d\pi_X \,:\, \Theta_G \,\longrightarrow\,
\pi^* \Theta_X$ for the projection $\pi_X \,:\, G \,\longrightarrow\, X$. This inclusion
corresponds precisely to the inclusion of $H$-representations $\h \,\hookrightarrow \,\g$. Hence the
sequence of (trivial) vector bundles
\begin{align*}
0 \,\longrightarrow\, \Theta_{G/X} \,\longrightarrow\, \Theta_G \,\longrightarrow\,
\pi^*\Theta_X \,\longrightarrow\, 0
\end{align*}
on $G_H$ corresponds precisely to the sequence \eqref{D} of $H$-representations, but at the same time
is the sequence \eqref{A1} for $G_H$. The Atiyah sequence for $G_H$ is precisely the quotient of
the former by $H$, which is the same as the exact sequence of vector bundles associated to \eqref{D}.

For the more general statement, consider the actions of $H$ and $M$ on $G\times M$ defined by
\begin{align} \label{E}
(a,\,b) \cdot h & \,= \,\left(ah, \,\tau(h)^{-1} b \right) & (a,\,b) \cdot m & \,=\, (a,\, bm)\, ,
\end{align}
where $h \,\in\, H$, $m \,\in\, M$ and $(a,\,b) \,\in \,G \times M$.

It is easy to see that these commute, so we may consider $G \times M$ as a principal $(H \times 
M)$-bundle over $X$. Furthermore, $G^\tau \,=\, (G \times M) / H$, by definition.

In the above, the isomorphism $G_H \times^H \g/\h \,\stackrel{\sim}{\longrightarrow}\, \Theta_X$ written explicitly is
\begin{align*}
[ g,\, \overline{\xi} ] \,\longmapsto\, d\pi( dL_g \xi)\, ,
\end{align*}
where $\overline{\xi}$ denotes the class $\xi + \h$, and $[g, \,\overline{\xi}]$ the class of
$(g,\, \overline{\xi}) \,\in \,G_H \times \g/\h$ in $G_H \times^H \g/\h$. We have
$T_{(a,b)}(G \times M) \,=\, dL_a \g \oplus dR_b \m$; here we use right invariance in the $\m$-factor
since in \eqref{E} we are multiplying on the left; $R_b \,:\, M \,\longrightarrow\, M$ is
right multiplication by $b \in M$. So as above, since $G \times M$ is a principal $H$-bundle over
$G^\tau$,
\begin{align*}
(G \times M) \times^H \big( (\g \oplus \m) /\h \big)\,\stackrel{\sim}{\longrightarrow}\, \Theta_{G^\tau}
\end{align*} 
via
\begin{align} \label{F}
[a,\,b,\, \overline{(\xi, \eta)}] \,\longmapsto\, d\rho (dL_a \xi,\, dR_b \eta)\, ,
\end{align}
where $\rho \,:\, G \times M \,\longrightarrow\, G^\tau$ is the quotient by $H$. Again, the Atiyah
sequence arises from the inclusion of the vertical tangent bundle of $G^\tau$ into the full tangent
bundle. Lifting this to $G \times M$, this comes from the map of trivial bundles
\begin{align*}
(G \times M) \times \m & \,\longrightarrow\, (G \times M) \times \big( (\g \oplus \m) /\h \big) & 
(a,\,b, \,\eta) & \,\longmapsto\, \left( a,\, b,\, \overline{(0, \Ad_b \eta)} \right).
\end{align*}
(The $\Ad_b$ arises because we use $dR_b$ for the $\m$-factor.) Since this commutes with the
$H$-action, it descends to the quotient $G^\tau \times \m \,\longrightarrow\, \Theta_{G^\tau}
\,=\, (G \times M) \times^H \big( (\g \oplus \m) /\h \big)$
\begin{align*}
\left( [a,\,b],\, \eta \right) & \,\longmapsto\, \left[ a,\, b,\, \overline{(0, \Ad_b \eta)} \right]
\end{align*}
using the isomorphism \eqref{F}.

By definition, $\At G^\tau \,=\, \Theta_{G^\tau}/M$ and we have an isomorphism $\Theta_{G^\tau} / M
\,\longrightarrow\, G_H \times^H \big( (\g \oplus \m) /\h \big)$
\begin{align*}
\left[ a,\,b,\, \overline{(\xi, \eta)} \right] \,\longmapsto\, \left[ a,\,
 \overline{(\xi, \Ad_b \eta)} \right]
\end{align*}
with inverse
\begin{align*}
\left[ a,\, \overline{(\xi, \eta)} \right] \,\longmapsto\, \left[ a,\, e,\,
\overline{(\xi, \eta)} \right]\, .
\end{align*}

Now, $\ad G^\tau\, =\, (G^\tau \times \m)/M \,=\, G_H \times^H \m$, and the map $\ad G^\tau
\,\longrightarrow\, \At G^\tau$ under the above isomorphism is
\begin{align*}
[a, \,\eta] \,\longmapsto\, \left[ a,\, \overline{(0, \eta)} \right]\, ,
\end{align*}
which is clearly associated to the map of $H$-representations
\begin{align*}
\m \,\longmapsto\, (\g \oplus \m) /\h. & \qedhere
\end{align*}
\end{proof}

Let $G$, $H$, $\tau \,:\, H \,\longrightarrow\, M$ be as above and let $P := G_H(M)$. We may now give a description of the base of the universal pullback $\lambda$-connection of $P$ in terms of the representations of $H$. We simply parallel the construction of \eqref{e:AtOmega} by tensoring \eqref{e:AtSeqM} with $(\g/\h)^\vee$ to obtain the top row of
\begin{align}\label{e:RepOmegagen}
\vcenter{ \xymatrix{
0 \ar[r] & \Hom(\g/\h, \,\m) \ar[r] \ar@{=}[d] & \Hom \big( \g/\h, \,(\g \oplus \m)/ \h \big) \ar[r]^-{\sigma \circ -} & \End \g/\h \ar[r] & 0 \\
0 \ar[r] & \Hom(\g/\h, \,\m) \ar[r] & \w \ar[r]^-{\sigma \circ -} \ar@{^{(}->}[u] & \C \ar[r] \ar@{^{(}->}[u] & 0 ,
} }
\end{align}
where $\sigma\, :\, (\g \oplus \m)/ \h\, \longrightarrow\, \g/\h$ is the projection;
in the bottom row, $\w$ is defined as the $H$-subrepresentation 
\begin{align*} 
\w \,:=\, \left\{\nu \,\in\, \Hom\big(\g/\h,\, (\g \oplus \m)/\h \big)\,\mid\, \sigma \circ \nu
\in \C \cdot {\rm Id}_{\g/\h} \right\}\, 
\end{align*}
of $\Hom \big( \g/\h, \,(\g \oplus \m)/\h \big)$. Therefore, in our case, the vector bundle $W_P$ in \eqref{e:AtOmega} is the vector bundle $G_H \times^H \w$ over $G/H$ associated to $\w$. For $\lambda \in \C$, we also define the subset
\begin{align} \label{e:alam}
\a_\lambda \,:=\, \left\{\nu \,\in\, \Hom\big(\g/\h,\, (\g \oplus \m)/\h \big)\,\mid\, \sigma \circ \nu
\,=\, {\rm Id}_{\g/\h} \right\}\, 
\end{align}
of $\w$. This is an affine $H$-space modelled on the $H$-module $\Hom(\g/\h, \m)$ invariant under the $H$-action on $\Hom\big(\g/\h,\, (\g \oplus \m)/\h \big)$. The following is clear from the construction just given and the definition \eqref{e:ClamDef}.

\begin{prop} \label{p:Clamhomog}
In the situation where $X \,=\, G/H$ and $P \,= \,G_H(M)$, the base of universal pullback $\lambda$-connection is
\begin{align*}
C_\lambda(P) \,=\, G \times^H \a_\lambda\, .
\end{align*}
\end{prop}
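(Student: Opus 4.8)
The plan is to trace the general construction of $C_\lambda(P)$ from Subsection \ref{s:univlamcxn} through the homogeneous dictionary supplied by Lemma \ref{le1}, matching each object with its representation-theoretic counterpart. Recall from \eqref{e:ClamDef} that $C_\lambda(P) = (\pi^\circ)^{-1}(\lambda) \subset Z^\circ = \Tot(W_P)$, where $\pi^\circ$ is the composite of $\sigma \,:\, W_P \,\longrightarrow\, \O_X$ with the fibrewise projection $\Tot(\O_X) = X \times \C \,\longrightarrow\, \C$. By Lemma \ref{le1}, the Atiyah sequence \eqref{A} for $P = G_H(M)$ is associated to the sequence \eqref{e:AtSeqM} of $H$-representations, and tensoring with $(\g/\h)^\vee$ and taking the preimage of the scalars $\C \cdot \Id_{\g/\h}$ yields exactly the bottom row of \eqref{e:RepOmegagen}. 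Hence the first step, which is essentially carried out already in \eqref{e:RepOmegagen}, is the identification $W_P = G_H \times^H \w$, and correspondingly $Z^\circ = \Tot(W_P) = G \times^H \w$.

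Second, I would identify the map $\pi^\circ$ in these terms. Since $\sigma \,:\, W_P \,\longrightarrow\, \O_X$ is, by \eqref{e:RepOmegagen}, the bundle map associated to the $H$-equivariant homomorphism $\sigma \circ - \,:\, \w \,\longrightarrow\, \C$, the only point to check is that $\C$ carries the trivial $H$-action, so that this map descends to $G \times^H \w \,\longrightarrow\, G \times^H \C = X \times \C$ and $\pi^\circ$ is the resulting projection to $\C$. This holds because the $H$-action on $\Hom\big(\g/\h,\, (\g \oplus \m)/\h\big)$ commutes with post-composition by the equivariant map $\sigma$: if $\sigma \circ \nu = \lambda \cdot \Id_{\g/\h}$, then $\sigma \circ (h \cdot \nu) = \lambda \cdot \Id_{\g/\h}$ as well, so the scalar assigned to $\nu \in \w$ is $H$-invariant. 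Consequently $\pi^\circ$ sends the class $[g,\, \nu] \in G \times^H \w$ to the scalar $\lambda$ with $\sigma \circ \nu = \lambda \cdot \Id_{\g/\h}$.

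Combining these, the fibre $(\pi^\circ)^{-1}(\lambda)$ consists of the classes $[g,\, \nu]$ with $\sigma \circ \nu = \lambda \cdot \Id_{\g/\h}$, which is precisely $G \times^H \a_\lambda$ by the definition \eqref{e:alam}; this gives the claimed equality. There is no serious obstacle here, as the content is entirely in the bookkeeping of the associated-bundle constructions already laid out in \eqref{e:RepOmegagen}. The one thing worth verifying explicitly is the $H$-invariance of the scalar just discussed, which is what makes $\pi^\circ$ well defined as a map to $\C$; as a consistency check, one can also confirm that the translation module of $\a_\lambda$, namely $\Hom(\g/\h,\, \m) = \m \otimes (\g/\h)^\vee$, yields under the associated-bundle functor exactly $\ad P \otimes \Omega_X$, matching the affine-bundle structure of $C_\lambda(P)$ recorded in Proposition \ref{prop1}.
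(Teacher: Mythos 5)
Your proposal is correct and follows the same route as the paper, which simply declares the proposition ``clear from the construction just given and the definition \eqref{e:ClamDef}''; you have merely spelled out the bookkeeping (the identification $W_P = G_H \times^H \w$, the $H$-invariance of the scalar $\sigma\circ\nu$, and the resulting description of the fibre of $\pi^\circ$) that the authors leave implicit. No gap.
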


For simplicity of notation, we will assume $\lambda = 1$, and therefore describe the universal pullback connection for $P \,=\, G_H(M)$. We set $\a \,:=\, \a_1$. One will observe that this ``factor'' of $\a$ is precisely what is required to split the appropriate sequence to obtain a connection trivial on the fibres. We may think of $G \times \a$ as the total space of an $H$-bundle over $G \times^H \a$. Recall that the Atiyah sequence for $G_H(M)$ over $G/H$ was given by \eqref{e:AtSeqM}. Pulling this back to $G \times^H \a$, we get the sequence
\begin{align*}
0 \,\longrightarrow\, (G \times \a) \times^H \m \,\longrightarrow\, (G \times \a) \times^H (\g \oplus \m)/\h
\,\longrightarrow\, (G \times \a) \times^H \g/\h \,\longrightarrow\, 0\, ,
\end{align*}
which in our situation is precisely the lower sequence of \eqref{B1} (for us, $f$ is
the projection $G \times^H \a \,\longrightarrow\,G/H$). This has a tautological splitting $t : (G \times \a) \times^H \g/\h \,\longrightarrow\, (G \times \a) \times^H (\g \oplus \m)/\h$, coming from the factor of $\a$ given by
\begin{align} \label{e:canpbcxnhomog}
[g, \,\nu, \,\xi + \h] \,\longmapsto \, \left[g,\, \nu,\, \nu(\xi + \h) \right]\, .
\end{align}
The fact that this gives a splitting comes from the definition of $\a$ in \eqref{e:alam}. Then it is clear from Definition \ref{d:trivfib} that this yields a connection trivial on the fibres of $G \times^H \a \,\longrightarrow\, G/H$.

\subsection{Pullbacks of homogeneous spaces}

Let $G$, $H$, $\tau \,:\, H \,\longrightarrow\, M$ and $P := G_H(M)$ be as above and 
suppose now that $K \,\leq\, H$ is a closed subgroup; as before,
``$\leq$'' denotes a subgroup. We let $Y \,:=\, G/K$, so that 
there is a smooth (i.e., submersive) map $f \,: \,Y \,=\, G/K \,\longrightarrow\, X 
\,=\, G/H$.

\begin{lem}\label{le2}
The pulled back principal $H$-bundle $f^* G_H$ may be canonically identified with the principal $H$-bundle $G_K(H) := G_K \times^K H$ on $Y$ obtained by extending the structure group of the principal $K$-bundle $G_K$ using the inclusion of $K$ in $H$.
\end{lem}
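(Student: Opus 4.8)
The plan is to write down both principal $H$-bundles explicitly as spaces built from $G$, and then to exhibit a canonical $H$-equivariant isomorphism between them covering the identity on $Y = G/K$. The cleanest organizing idea is that this isomorphism is induced by the multiplication map on $G$.

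First I would unravel the two sides. The pullback is the fibre product
$$
f^* G_H \,=\, Y \times_X G \,=\, \{ (gK,\, g') \,\in\, G/K \times G \,\mid\, gH = g'H \}\, ,
$$
with $H$ acting on the right through the second factor, $(gK,\, g')\cdot h = (gK,\, g'h)$, and projecting to $Y$ via the first factor. On the other side,
$$
G_K(H) \,=\, G_K \times^K H \,=\, (G \times H)/K\, ,
$$
where $K$ acts by $(g,\, h)\cdot k = (gk,\, k^{-1}h)$ via the inclusion $K \hookrightarrow H$, with residual right $H$-action $[g,\,h]\cdot h' = [g,\, h h']$ and projection $[g,\,h] \mapsto gK$.

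The key step is to introduce the multiplication map $G \times H \to G$, $(g,\,h) \mapsto gh$, which is invariant under the above $K$-action since $(gk,\, k^{-1}h) \mapsto g k k^{-1} h = gh$, and which is $H$-equivariant for the residual actions. It therefore descends to an $H$-equivariant morphism $G_K \times^K H \to G_H = G$ covering $f$, and by the universal property of the pullback it factors through $f^* G_H$, giving the candidate
$$
\Psi \,:\, G_K \times^K H \,\longrightarrow\, f^* G_H\, , \qquad [g,\,h] \,\longmapsto\, (gK,\, gh)\, .
$$
I would then check the routine points: $\Psi$ is well defined on the $K$-quotient, it lands in the fibre product because $ghH = gH$ for $h \in H$, and it is $H$-equivariant and compatible with the projections to $Y$.

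Finally I would verify bijectivity. Injectivity follows because $gK = g'K$ together with $gh = g'h'$ forces $g' = gk$ for some $k \in K$ and then $h' = k^{-1}h$, so the two representatives are $K$-equivalent; surjectivity follows because any $(gK,\, g') \in f^* G_H$ has $g^{-1}g' \in H$, so with $h := g^{-1}g'$ one gets $\Psi([g,\,h]) = (gK,\, g')$. Since $\Psi$ is a bijective $H$-equivariant morphism of principal $H$-bundles over $Y$, it is an isomorphism. The only real subtlety---hardly a genuine obstacle---is keeping the left/right conventions for the $K$- and $H$-actions straight so that the multiplication map is truly $K$-invariant and $H$-equivariant; once that description is adopted, the remaining verifications are formal.
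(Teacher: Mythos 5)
Your proposal is correct and follows essentially the same route as the paper: the paper also defines the map $(g,\,h)\,\longmapsto\,(gK,\,gh)$ on $G\times H$, checks $K$-invariance, and descends it to an $H$-equivariant isomorphism $G_K(H)\,\longrightarrow\, f^*G_H$. Your explicit verification of bijectivity is a minor elaboration of the paper's observation that an $H$-equivariant morphism of principal $H$-bundles over the identity is automatically an isomorphism.
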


\begin{proof}
We note that the principal $H$-bundle $G_K(H)$ is the quotient of $G\times H$ where two elements $(g_1, \, h_1), \, (g_2, \, h_2)\, \in\, G\times H$ are identified if there is some $k\, \in\, K$ such that $g_2\,=\, g_1k$ and $h_2\,=\, k^{-1}h_1$. The principal $H$-bundle $f^* G_H$ is the subset of $(G/K)\times G$ consisting of all $(g'K,\, g)$ such that $g'H = gH$. Define the map 
\begin{align*}
& \Phi\, :\, G\times H\, \longrightarrow\, (G/K)\times G,\, & (g,\, h)\, & \longmapsto\,
(gK,\, gh).
\end{align*}
Clearly, $\Phi(G\times H)$ is contained in the above subset $f^* G_H\, \subset\, (G/K)\times G$. For any $k\, \in\, K$,
$$
\Phi(gk,\, k^{-1}h)\,=\, (gk,\, gkk^{-1}h)\,=\, (gk,\, gh)\,=\, \Phi(g,\, h)\, ,
$$
that is, $\Phi$ is $K$-invariant. Hence $\Phi$ descends to a map of total spaces
$$
\widehat{\Phi}\, :\, G_K(H) \, \longrightarrow\, f^* G_H\, 
$$
which intertwines the $H$-actions on $G_K(H)$ and $f^*
G_H$. Consequently, $\widehat{\Phi}$ is an isomorphism of principal $H$-bundles.
\end{proof}

\begin{cor} \label{4}
The composition $K\,\leq\, H\,\stackrel{\tau}{\longrightarrow}\,
M$ allows us to form the associated principal $M$-bundle $G_K(M)$ over $Y$. Then one has a canonical identification
\begin{align*}
f^* \left( G_H(M) \right) \,=\, G_K(M)\, .
\end{align*}
In particular, if $\vv$ is an $H$-representation, and hence a $K$-representation, and if $V := G_H \times^H \vv$ is the associated vector bundle on $X$, then
\begin{align*}
f^*V \,\cong\, G_K \times^K \vv\, ,
\end{align*}
with the right side being the vector bundle over $Y$ associated to $\vv$ as a $K$-representation.
\end{cor}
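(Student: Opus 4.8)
The plan is to derive this from Lemma~\ref{le2} together with two elementary and standard functoriality properties of the extension-of-structure-group (associated bundle) construction. First I would record these two properties. Property~(i): extension commutes with pullback, i.e.\ for a holomorphic map $f:Y\to X$, a principal $H$-bundle $Q$ on $X$, and a homomorphism $\tau:H\to M$, there is a canonical isomorphism $f^*(Q\times^\tau M)\cong (f^*Q)\times^\tau M$ of principal $M$-bundles; this is immediate from the fibre-product description of $f^*Q$ and the quotient description of the associated bundle. Property~(ii): extension is compatible with composition of homomorphisms, i.e.\ for the inclusion $K\hookrightarrow H$ followed by $\tau:H\to M$, and any principal $K$-bundle $R$ on $Y$, there is a canonical isomorphism $(R\times^K H)\times^\tau M\cong R\times^K M$, where on the right $K$ acts on $M$ through the composition $K\hookrightarrow H\xrightarrow{\tau}M$.

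Granting these, the identification is a short computation. Starting from $P=G_H(M)=G_H\times^\tau M$ and applying property~(i) to the principal $H$-bundle $G_H$, then substituting $f^*G_H\cong G_K\times^K H=G_K(H)$ from Lemma~\ref{le2}, and finally applying property~(ii), I obtain
\begin{align*}
f^*\!\left(G_H(M)\right)=f^*\!\left(G_H\times^\tau M\right)\cong (f^*G_H)\times^\tau M\cong (G_K\times^K H)\times^\tau M\cong G_K\times^K M=G_K(M).
\end{align*}
Since each isomorphism in this chain is canonical, the composite is a canonical identification of principal $M$-bundles, as claimed. The ``in particular'' statement about the vector bundle $V=G_H\times^H\vv$ follows by running the identical argument with the associated vector-bundle construction in place of the principal $M$-bundle construction (equivalently, taking $M=\GL(\vv)$ and passing to the associated vector bundle): property~(i) gives $f^*(G_H\times^H\vv)\cong (f^*G_H)\times^H\vv$, Lemma~\ref{le2} replaces $f^*G_H$ by $G_K\times^K H$, and property~(ii) yields $(G_K\times^K H)\times^H\vv\cong G_K\times^K\vv$, with $\vv$ regarded as a $K$-representation by restriction.

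The only point requiring genuine verification is the canonicity in property~(ii)---that the two-step extension along $K\hookrightarrow H\xrightarrow{\tau}M$ agrees with the one-step extension, compatibly with the explicit isomorphism $\widehat\Phi$ of Lemma~\ref{le2}---and I expect this bookkeeping to be the main obstacle. It is handled directly on total spaces, entirely in parallel with the construction of $\widehat\Phi$: writing $G_K\times^K H$ as the quotient of $G\times H$ by the $K$-action $(g,h)\cdot k=(gk,k^{-1}h)$ and then forming the $\tau$-extension, one checks that the map $[[g,h],m]\mapsto [g,\tau(h)m]$ descends to a well-defined $M$-equivariant isomorphism onto $G_K\times^K M$. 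No new ideas are needed beyond a careful check of well-definedness under the several group actions, so I would either verify this explicitly or cite it as standard.
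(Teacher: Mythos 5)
Your proposal is correct and matches the paper's (implicit) argument: the paper states this as an immediate corollary of Lemma~\ref{le2}, relying on exactly the two standard functoriality properties you invoke---compatibility of extension of structure group with pullback and with composition of homomorphisms---and your explicit verification of the map $[[g,h],m]\mapsto[g,\tau(h)m]$ correctly fills in the bookkeeping the paper omits.
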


With this, taking the principal $M$-bundle $G_H(M)$ over $X$, and using Lemma \ref{le1} and Corollary \ref{4}, it is not hard to see that the diagram \eqref{B1} is that of vector bundles over $Y = G/K$ associated to the following diagram of $K$-representations
\begin{align}\label{e:B3}
\vcenter{ \xymatrix{
0 \ar[r] & \m \ar[r] \ar@{=}[d] & (\g \oplus \m) / \k \ar[r] \ar@{>>}[d] & \g/\k \ar[r] \ar@{>>}[d] & 0 \\
0 \ar[r] & \m \ar[r] & (\g \oplus \m) /\h \ar[r]_-{\sigma} & \g/\h \ar[r] & 0.
} }
\end{align}

Now, suppose that the bottom row of \eqref{e:B3} splits as a sequence of $K$-modules, i.e., there is a $K$-equivariant $\varphi \,:\, \g/\h \,\longrightarrow\, (\g \oplus \m)/\h$ such that $\sigma \circ \varphi \,=\, {\rm Id}_{\g/\h}$. This in turn 
yields a splitting of the bottom row of \eqref{B1} and hence a connection on $G_K(M) \,\cong\, 
f^*(G_H(M))$ which is trivial on the fibres of $f$. Applying Proposition \ref{prop1}, we get the 
following.

\begin{prop}\label{propr}
If the bottom row of \eqref{e:B3} splits as a sequence of $K$-modules, then there is a canonical morphism 
\begin{align*}
Y \,=\, G/K \,\longrightarrow\, C \left( G_H(M) \right)\, = \, G \times^H \a\, .
\end{align*}
Using the description of Proposition \ref{p:Clamhomog}, the map $G/K \,\longrightarrow\, G \times^H \a$ is explicitly described by
\begin{align} \label{e:GKmap}
gK \,\longmapsto\, [g,\, \varphi]\, .
\end{align}
(One may note that $\varphi$ being $K$-equivariant means $k \cdot \varphi = \varphi$ for all $k \in K$ and hence the right hand side is independent of the choice of coset representative.)
\end{prop}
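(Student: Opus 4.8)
The plan is to treat existence and the explicit formula separately, leaning on the universal property already recorded in Proposition \ref{prop1}. The existence of a canonical morphism is essentially immediate from the discussion preceding the statement: the $K$-equivariant section $\varphi \,:\, \g/\h \to (\g \oplus \m)/\h$, passed through the associated-bundle correspondence between the diagram \eqref{e:B3} of $K$-representations and the diagram \eqref{B1} of vector bundles over $Y = G/K$, splits the lower row of \eqref{B1} and hence (by Definition \ref{d:trivfib}) endows $f^*(G_H(M)) \cong G_K(M)$---the identification being Corollary \ref{4}---with a connection $D$ trivial on the fibres of $f$. Since $f$ is a submersion, Proposition \ref{prop1} applies directly and produces a unique $\psi \,:\, Y \to C(G_H(M))$ lying over $X$ with $(f^*P,\, D) \cong \psi^*(q^*P,\, \nabla)$. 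The entire content of the proof is then to identify this $\psi$ with the map \eqref{e:GKmap}.

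For this I would first verify that the prescription $gK \mapsto [g,\, \varphi]$ is a well-defined morphism over $X$. Note that $\varphi \in \a$ since $\sigma \circ \varphi = \Id_{\g/\h}$, cf.\ \eqref{e:alam}; and $K$-equivariance of $\varphi$ says precisely that $k \cdot \varphi = \varphi$ for the (conjugation) $H$-action restricted to $K$, so $[gk,\, \varphi] = [g,\, k \cdot \varphi] = [g,\, \varphi]$ and the coset representative does not matter. Composition with the projection $q \,:\, G \times^H \a \to G/H$ sends $gK \mapsto gH = f(gK)$, so the triangle over $X$ commutes.

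The key---and main---step is then to show that pulling the tautological connection $\nabla$ back along the candidate map recovers $D$; uniqueness in Proposition \ref{prop1} will then force the candidate to equal $\psi$. Here one restricts the tautological splitting \eqref{e:canpbcxnhomog}, namely $[g,\, \nu,\, \xi + \h] \mapsto [g,\, \nu,\, \nu(\xi + \h)]$, to the locus $\nu = \varphi$ cut out by the candidate map; this yields the splitting $[g,\, \xi + \h] \mapsto [g,\, \varphi(\xi + \h)]$ of the Atiyah sequence of $G_K(M)$, which is exactly the splitting defining $D$. The hard part will be the bookkeeping in this last comparison: one must check that the splitting of the Atiyah sequence of $G_K(M)$ induced by $\varphi$ through the correspondence \eqref{e:B3} $\leftrightarrow$ \eqref{B1}, and the splitting obtained by pulling back \eqref{e:canpbcxnhomog} along $gK \mapsto [g,\, \varphi]$, agree under the identifications of Lemma \ref{le2} and Corollary \ref{4}. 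Both expressions collapse to $[g,\, \xi + \h] \mapsto [g,\, \varphi(\xi + \h)]$, so the only real work is confirming that none of the intervening identifications introduces an unexpected twist; once that is done, the uniqueness clause of Proposition \ref{prop1} finishes the proof.
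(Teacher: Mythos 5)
Your proposal is correct and follows essentially the same route as the paper: existence via the splitting of the bottom row of \eqref{B1} induced by $\varphi$ together with Proposition \ref{prop1}, and identification of the map by checking that the pullback of the tautological splitting \eqref{e:canpbcxnhomog} along $gK \longmapsto [g,\, \varphi]$ recovers the splitting $\varphi$, then invoking the uniqueness clause of Proposition \ref{prop1}. The paper compresses this into one sentence, while you spell out the well-definedness and the bookkeeping; the content is the same.
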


\begin{proof} 
It is relatively straightforward to check that this map has the property that the pullback of \eqref{e:canpbcxnhomog} yields the splitting $\varphi$, and so it must be this map by the uniqueness in Proposition \ref{prop1}.
\end{proof}

\begin{cor} \label{c:GKmap}
Assume that $G$ is a connected complex affine algebraic group, $H \,\leq\, G$ a
closed subgroup of $G$ and $K \,\leq\, H$ a reductive subgroup. Then for any morphism of affine algebraic groups $\tau \,:\, H \,\longrightarrow\, M$, there is a canonical morphism
\begin{align*}
G/K \,\longrightarrow\, C \left( G_H(M) \right)\, .
\end{align*}
\end{cor}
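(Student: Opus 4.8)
The plan is to derive this corollary directly from Proposition \ref{propr}: its hypothesis is that the bottom row of \eqref{e:B3},
$$
0 \,\longrightarrow\, \m \,\longrightarrow\, (\g \oplus \m)/\h \,\stackrel{\sigma}{\longrightarrow}\, \g/\h \,\longrightarrow\, 0,
$$
splits as a sequence of $K$-modules, and the only new input of the corollary---the reductivity of $K$---is exactly what produces such a splitting. Thus the entire argument amounts to checking that this sequence splits $K$-equivariantly and then invoking Proposition \ref{propr}.

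First I would record that, by the discussion preceding \eqref{e:B3}, the displayed sequence is already a short exact sequence of $K$-representations, all of which are finite-dimensional and carry rational (algebraic) $K$-actions: $\g$ and $\g/\h$ inherit their $K$-actions by restricting the adjoint representation of $G$, while $\m$ is a $K$-representation through $K \hookrightarrow H \stackrel{\tau}{\longrightarrow} M \stackrel{\Ad}{\longrightarrow} \GL(\m)$, and the arrows are $K$-equivariant by Lemma \ref{le1}.

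Next I would invoke complete reducibility. Since $G$ is a complex affine algebraic group, so is its subgroup $K$; as $K$ is reductive and we work over $\C$ (characteristic $0$), $K$ is linearly reductive, so every finite-dimensional rational $K$-representation decomposes as a direct sum of irreducibles. In particular every short exact sequence of such representations splits, so there exists a $K$-equivariant section $\varphi \,:\, \g/\h \,\longrightarrow\, (\g \oplus \m)/\h$ with $\sigma \circ \varphi \,=\, {\rm Id}_{\g/\h}$. This is precisely the hypothesis required by Proposition \ref{propr}.

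Finally, applying Proposition \ref{propr} to this $\varphi$ yields the morphism $G/K \,\longrightarrow\, C(G_H(M))$, given explicitly by $gK \,\longmapsto\, [g,\, \varphi]$ as in \eqref{e:GKmap}. The corollary is therefore essentially immediate, and there is really only one point to get right: the first step, namely confirming that the objects in play are genuinely rational representations of the linearly reductive group $K$, so that the splitting theorem applies. Once this is secured, the existence of the section---and hence, via Proposition \ref{propr}, of the desired morphism---follows at once. (One should keep in mind that the map depends on the chosen $\varphi$, which need not be unique; "canonical" here is to be read as asserting that any such $K$-equivariant splitting produces, naturally in the given data $(G,H,K,\tau)$, such a morphism.)
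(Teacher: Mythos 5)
Your proposal is correct and follows exactly the paper's own argument: the paper's proof is the one-line observation that reductivity of $K$ forces every short exact sequence of $K$-modules to split, after which Proposition \ref{propr} applies. Your additional care about the representations being rational and the map's dependence on the chosen splitting $\varphi$ is reasonable elaboration but does not change the route.
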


\begin{proof}
Since $K$ is reductive, any short exact sequence of $K$-modules splits. Hence it follows from Proposition \ref{propr}.
\end{proof}

\subsection{The universal connection space for $\C^\times$-bundles on partial flag varieties} \label{ss:ucfv}

Let $G$ be a complex simple simply connected group, and let $P\,\leq\, G$ be a
parabolic subgroup of $G$. Let $\chi \,:\, P \,\longrightarrow\, \C^\times$ be a character. The unipotent radical of $P$ will be denoted by $U$. Let $L\, \leq\, P$ be a Levi subgroup, meaning the composition $L\, \hookrightarrow\, P\, \longrightarrow\, P/U$ is an isomorphism. We have the Levi factorization $P \,=\, U \rtimes L$. Since $U$ admits no non-trivial characters, the characters of $P$ and $L$ are identified.

The Lie algebras of $P$, $U$ and $L$ will be denoted by $\p$, $\u$ and $\mathfrak l$ 
respectively. We have $\p\,=\,\u\oplus \l$ (this is a direct sum of vector spaces, 
not Lie algebras, since in general $\l$ normalizes, but does not centralize, $\u$). The
homomorphism of Lie algebras $d\chi \,:\, \p \,\longrightarrow\, \C$ vanishes on 
$[\l,\, \l] + \u$; note that this sum is in fact direct.

Let $\Phi_G$ be the root system for $\g$ with respect to which $P$ is a standard parabolic (i.e., with respect to a maximal torus $T$ of a Borel subgroup $B \leq G$ and with $P \geq B$). Then this comes with a choice of positive roots $\Phi_G^+$. We assume that $L$ is chosen so that $\Phi_L$ is a sub-root system of $\Phi_G$. Then if we take $\Psi \,\subseteq \,\Phi_G^+$ to be the subset of roots whose root spaces appear in $\u$, we have 
\begin{align*}
\Phi_G \,=\, \Phi_L \sqcup \Psi \sqcup \Psi^-\, ,
\end{align*}
where $\Psi^- \,:= \,- \Psi$. We will let $\u_-$ be the sum of the root spaces for $\alpha \,\in\, \Psi^-$; then we have a decomposition
\begin{align*}
\g \,=\, \u_- \oplus \l \oplus \u_+\, ,
\end{align*}
where $\u_+ \,:=\, \u$, and we will often use the subscript $+$ when we speak about both $\u \,=\, \u_+$ and $\u_-$. Observe then that $\p_- := \l + \u_-$ is also a sub-algebra of $\g$ and its corresponding subgroup $P_-$ is the opposite parabolic to $P$.

We wish to consider the situation of the preceding subsections in the case $H = P$ and $M = \C^\times$, $\tau = 
\chi \,:\, P \,\longrightarrow\, \C^\times$ a character of $P$.
In this case, the spaces $\w$ and $\a$ have the explicit descriptions
\begin{align*} 
\w & \,=\, \left\{\nu \,\in\, \Hom\big(\g/\p,\, (\g \oplus \C)/\p \big)\,\mid\, \sigma \circ \nu \in \C \cdot {\rm Id}_{\g/\p} \right\}\, \\
\a & \,=\, \left\{\nu \,\in\, \Hom\big(\g/\p,\, (\g \oplus \C)/\p \big)\,\mid\, \sigma \circ \nu \,=\, {\rm Id}_{\g/\p} \right\}\, \subseteq \w
\end{align*}
where $\sigma$ is the natural projection in \eqref{e:B3}, and \eqref{e:RepOmegagen} becomes
\begin{align} \label{e:RepOmega}
\vcenter{ \xymatrix{
0 \ar[r] & \Hom(\g/\p, \,\C) \ar[r] \ar@{=}[d] & \Hom \big( \g/\p, \,(\g \oplus \C)/ \p \big) \ar[r]^-{\sigma \circ -} & \End \g/\p \ar[r] & 0 \\
0 \ar[r] & \Hom(\g/\p, \,\C) \ar[r] & \w \ar[r]^-{\sigma \circ -} \ar@{^{(}->}[u] & \C \ar[r] \ar@{^{(}->}[u] & 0.
} }
\end{align}
Furthermore, Proposition \ref{p:Clamhomog} then tells us that the base of the universal connection for $G_P(\C^\times) = G_P \times^\chi \C^\times$ is
\begin{align*}
G \times^P \a.
\end{align*}

On the other hand, we may consider the pullback of $G_P(\C^\times)$ to $G/L$ via the projection
$G/L \,\longrightarrow\, G/P$. In our situation, \eqref{e:B3} becomes
\begin{align} \label{e:B3GP}
\vcenter{ \xymatrix{
0 \ar[r] & \C \ar[r] \ar@{=}[d] & (\g \oplus \C) / \l \ar[r] \ar@{>>}[d] & \g/\l \ar[r] \ar@{>>}[d] & 0 \\
0 \ar[r] & \C \ar[r] & (\g \oplus \C) /\p \ar[r]_-{\sigma} & \g/\p \ar[r] & 0,
} }
\end{align}
with the associated diagram of vector bundles over $G/L$ being \eqref{B1}. As $L$ is reductive, Corollary \ref{c:GKmap} gives a canonical morphism
\begin{align} \label{e:GmodLcanmap}
G/L \,\longrightarrow\, C_P(\C^\times) \,= \,G \times^P \a\,.
\end{align}

\begin{prop}\label{p:GmodLCP}
In the case that the character $\chi \,:\, P \,\longrightarrow\, \C^\times$ is
anti-dominant (see the definition below), the morphism \eqref{e:GmodLcanmap} is an
isomorphism. Hence $G/L$ may be identified with the base of the universal
pullback connection for $G_P(\C^\times)$, with the pullback connection
arising from the $L$-splitting of the bottom row of \eqref{e:B3GP}.
\end{prop}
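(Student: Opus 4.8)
The plan is to use the $G$-equivariance of \eqref{e:GmodLcanmap} to reduce the statement to a single fibre, to identify that fibre map with the orbit map of the unipotent radical $U$, and to show that anti-dominance of $\chi$ is exactly the condition that makes the associated pairing non-degenerate. First I would note that the morphism \eqref{e:GmodLcanmap}, given explicitly by $gL\mapsto[g,\varphi]$ in \eqref{e:GKmap}, is $G$-equivariant and covers the identity of $G/P$. Under the associated-bundle construction $G\times^P(-)$, which identifies $G$-equivariant bundles over $G/P$ with $P$-spaces (here $G/L=G\times^P(P/L)$ and $G\times^P\a$), the map \eqref{e:GmodLcanmap} is an isomorphism if and only if its restriction to the fibre over $eP$ is. Identifying $P/L$ with $U$ via $P=U\rtimes L$ and the fibre of $G\times^P\a$ with $\a$, this fibre map is the orbit map
\begin{align*}
\mu \,:\, U \,\longrightarrow\, \a\, , \qquad u \,\longmapsto\, u\cdot\varphi\, ,
\end{align*}
for the affine $P$-action on $\a$ of \eqref{e:alam}. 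Here I take $\varphi$ to be the $L$-splitting of the bottom row of \eqref{e:B3GP} coming from $\g=\u_-\oplus\p$, namely $\varphi(\eta+\p)=(\eta,0)+\p$ for $\eta\in\u_-$; this is $L$-equivariant because $\u_-$ is $\Ad L$-stable and $L$ acts trivially on the $\C$-summand.

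Next I would compute the differential of $\mu$ at the identity. Writing the infinitesimal action of $X\in\u_+$ on $\nu\in\Hom(\g/\p,(\g\oplus\C)/\p)$ as $(X\cdot\nu)(\bar\eta)=X\cdot\nu(\bar\eta)-\nu(X\cdot\bar\eta)$, and using \eqref{e:hinc} to identify $(\zeta,0)+\p$ with $d\chi(\zeta)\in\C$ for $\zeta\in\p$, a short calculation gives
\begin{align*}
(d\mu_e X)(\eta + \p) \,=\, d\chi\big( [X,\eta]_{\l} \big)\, ,\qquad X\in\u_+,\ \eta\in\u_-,
\end{align*}
where $[X,\eta]_\l$ is the $\l$-component of $[X,\eta]$ in $\g=\u_-\oplus\l\oplus\u_+$. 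Thus, under $\g/\p\cong\u_-$, the map $d\mu_e\,:\,\u_+\to\Hom(\g/\p,\C)\cong\u_-^*$ is induced by the pairing $\u_+\times\u_-\to\C$, $(X,\eta)\mapsto d\chi([X,\eta]_\l)$, and $d\mu_e$ is an isomorphism precisely when this pairing is non-degenerate.

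The heart of the argument, and the step I expect to be the main obstacle, is to show that anti-dominance of $\chi$ is exactly what forces non-degeneracy. Taking root vectors $e_\alpha\in\g_\alpha$ ($\alpha\in\Psi$) as a basis of $\u_+$ and $e_{-\beta}$ ($\beta\in\Psi$) as a basis of $\u_-$, one has $[e_\alpha,e_{-\beta}]\in\g_{\alpha-\beta}$, and $d\chi$ annihilates its $\l$-component unless $\alpha=\beta$: indeed $d\chi$ vanishes on $[\l,\l]$ (hence on every root space of $\l$), and the $\l$-component is zero when $\alpha\neq\beta$ with $\alpha-\beta\notin\Phi_L\cup\{0\}$. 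Hence the pairing is diagonal with entries $d\chi(h_\alpha)$, where $h_\alpha:=[e_\alpha,e_{-\alpha}]$ is a nonzero multiple of the coroot $\alpha^\vee$; so $d\chi(h_\alpha)$ is a nonzero multiple of $(\lambda_\chi,\alpha)$, with $\lambda_\chi:=d\chi|_{\t}\in\t^*$ the weight of $\chi$. It then remains to check $(\lambda_\chi,\alpha)\neq 0$ for every $\alpha\in\Psi$, including non-simple ones: writing $\alpha=\sum_i m_i\alpha_i$ with $m_i\geq 0$, anti-dominance gives $(\lambda_\chi,\alpha_i)=0$ for $\alpha_i\in\Phi_L$ and $(\lambda_\chi,\alpha_i)<0$ for $\alpha_i\notin\Phi_L$, while $\alpha\in\Psi$ forces some $\alpha_i\notin\Phi_L$ to occur with $m_i>0$; summing yields $(\lambda_\chi,\alpha)<0$. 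This is precisely where the notion of anti-dominance (and its equivalence with ampleness of the associated line bundle) is used decisively.

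Finally I would upgrade ``$d\mu_e$ is an isomorphism'' to ``$\mu$ is an isomorphism''. Injectivity of $d\mu_e$ shows $\Stab_U(\varphi)$ has trivial Lie algebra, hence is finite; since $U$ is unipotent and therefore torsion-free, $\Stab_U(\varphi)=\{e\}$. In characteristic zero the orbit map $U\to U\cdot\varphi$ is then an isomorphism, and $\dim U\cdot\varphi=\dim U=|\Psi|=\dim\a$. As orbits of unipotent groups on affine varieties are Zariski closed, $U\cdot\varphi$ is a closed subvariety of full dimension in the irreducible space $\a$, whence $U\cdot\varphi=\a$ and $\mu$ is an isomorphism. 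By the reduction of the first paragraph, \eqref{e:GmodLcanmap} is then an isomorphism, and the pullback connection is the one determined by the $L$-splitting $\varphi$ through \eqref{e:canpbcxnhomog}, as asserted.
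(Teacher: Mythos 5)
Your proof is correct, but it takes a genuinely different route from the paper's. The paper proves the proposition by writing down the inverse map $[g,\nu]\,\longmapsto\, guL$ directly, where $u$ is supplied by Lemma \ref{le:F}(b): an explicit induction on root heights that solves for the coefficients $c_\alpha$ of $E=\log u$ one height at a time, using the non-vanishing $d\chi(h_\alpha)\,\neq\,0$ of \eqref{e:walpha} at every stage. You instead reduce by $G$-equivariance to the fibre over $eP$, identify the fibre map with the orbit map $U\,\longrightarrow\,\a$, $u\,\longmapsto\, u\cdot\nu_0$, compute its differential at the identity (your pairing $(e_\alpha,f_\beta)\,\longmapsto\, d\chi([e_\alpha,f_\beta]^\l)$, diagonal with entries $d\chi(h_\alpha)$, is exactly the computation of Lemma \ref{l:uactionnu0}), and then invoke general structure theory---torsion-freeness of unipotent groups to kill the finite stabilizer, and the Kostant--Rosenlicht theorem that orbits of unipotent groups on affine varieties are closed---to conclude that the orbit is all of $\a$. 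Your argument is shorter, isolates cleanly where strict anti-dominance enters (non-degeneracy of a single diagonal pairing), and your verification that $d\chi(h_\alpha)\,\neq\,0$ for non-simple $\alpha\,\in\,\Psi$ spells out a point the paper only asserts at \eqref{e:walpha}. What it gives up is constructivity: the paper's induction produces $u$ explicitly as a function of $\nu$, and that explicit dependence---in particular the polynomial dependence of the $c_\alpha$ on a scalar parameter recorded in Remark \ref{r:unu}---is reused in Section \ref{s:twspGP} to build the twistor lines and compute their normal bundles. So your proof establishes the proposition (and the existence-and-uniqueness content of Lemma \ref{le:F}(b)) but would not by itself substitute for the paper's inductive construction in the later arguments.
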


Proposition \ref{p:GmodLCP} will be proved towards the end of this section.

\begin{rmk}
Although Proposition \ref{p:GmodLCP} says that the base space of universal connection
is independent of $\chi$, the curvature of the universal connection, which is a
symplectic form on $G/L$, depends on $\chi$.
\end{rmk}

\begin{defn}
We say that a character $\chi \in X^\bullet(T)$ is \emph{strictly anti-dominant for $P$} if it is anti-dominant in the sense that
\begin{align*}
\langle \chi,\, \alpha^\vee \rangle \,\leq\, 0
\end{align*}
for all $\alpha \,\in\, \Phi_G^+$ with the inequality being strict for all $\alpha \, \in\, \Psi$. Of course, one only requires these inequalities to hold for simple roots in the respective sets.
\end{defn}

To prove Proposition \ref{p:GmodLCP}, we first make more explicit the splitting of the bottom sequence in \eqref{e:B3GP} which gives rise to the morphism \eqref{e:GmodLcanmap}, and then look at the action of $U$ on this splitting and on $\a$. The simplest way to describe the $L$-splitting of \eqref{e:B3GP} is by the map $r \,:\, (\g \oplus \C)/\p \,\longrightarrow\, \C$:
\begin{align} \label{e:LsplitP}
(\xi, c) + \p \,\longmapsto\, d\chi(\xi^\l) + c\, ,
\end{align}
where we write $\xi \,=\, \xi^+ + \xi^\l + \xi^-$ with $\xi^{\pm} \in \u_{\pm}$, $\xi^\l \in \l$. It is clear that this $L$-equivariant, as the $L$-action on $\C$ on both sides is the adjoint action which must be trivial. Of course, the induced splitting of the top row of \eqref{e:B3GP}, which yields the Atiyah sequence of the pullback bundle, has the same expression:
\begin{align} \label{e:Lsplit}
(\xi, c) + \l \,\longmapsto \,d\chi(\xi^\l) + c\, .
\end{align}

We would like to view the splitting as a map $\nu_0 \,:\, \g/\p \,\longrightarrow\, (\g \oplus \C)/\p$, so as to view it as an element of $\a$.
To do this, we choose a basis as follows. For $\alpha\,\in\, \Psi^-$, choose a root vector $f_\alpha$ in the corresponding root space $\g_\alpha$. Then $\{ \overline{f}_\alpha \}_{\alpha \in \Psi^-}$ is a basis for $\g/\p \cong \u_-$ (the bar indicates residue modulo $\p$). One can then verify that
\begin{align} \label{nu0}
\nu_0 & \,:\, \g/\p \,\longrightarrow\, (\g \oplus \C)/\p\, & \overline{f}_\alpha \, & \longmapsto\, \overline{(f_\alpha,0)}\, .
\end{align}
gives the other realization of splitting \eqref{e:LsplitP}; of course, $\nu_0 \,\in\, \a$.

\begin{lem} \label{le:F} \mbox{}
\begin{enumerate}
\item[(a)] The element $\nu_0$ in \eqref{nu0} satisfies
$\ell \cdot \nu_0 \,=\, \nu_0$ for every $\ell \in L$, i.e., for all $\overline{f} \,\in\, \g/\p$, 
\begin{align*}
\nu_0(\ell \cdot \overline{f} ) \,=\, \ell \cdot \nu_0(\overline{f})\, .
\end{align*}

\item[(b)] Assuming that $\chi$ is strictly anti-dominant for $P$, given $\nu \,\in\, \a$, there exists a
unique $u \,\in\, U$ such that $u \cdot \nu_0 \,=\, \nu$.
\end{enumerate}
\end{lem}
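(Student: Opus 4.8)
The plan is to prove the two parts separately, with part (a) being a quick equivariance check and part (b) being the substantive claim requiring the anti-dominance hypothesis.

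For part (a), I would verify directly that $\nu_0$ is $L$-equivariant using its definition in \eqref{nu0}. Since $L$ acts on $\g/\p \cong \u_-$ via the adjoint action and preserves $\u_-$ (because $\l$ normalizes each of $\u_\pm$), the action permutes and rescales the root spaces $\g_\alpha$ for $\alpha \in \Psi^-$. The map $\nu_0$ sends $\overline{f}_\alpha$ to $\overline{(f_\alpha, 0)}$, and the $L$-action on $(\g \oplus \C)/\p$ is the diagonal adjoint action (trivial on the $\C$-factor). So I would show that $\ell \cdot \overline{(f_\alpha, 0)} = \overline{(\Ad_\ell f_\alpha, 0)}$, which matches $\nu_0(\ell \cdot \overline{f}_\alpha)$ precisely because $\Ad_\ell f_\alpha \in \u_- \subseteq \g$ has trivial $\C$-component under the inclusion. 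Equivalently, this just reflects that the splitting $r$ in \eqref{e:LsplitP} was constructed to be $L$-equivariant, so part (a) is essentially a restatement of that fact in the $\nu_0$-language.

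For part (b), the plan is to parametrize $\a$ explicitly and reduce the existence-and-uniqueness of $u \in U$ to a property of the $U$-action. Since $\a$ is an affine space modelled on $\Hom(\g/\p, \C) = \Hom(\u_-, \C)$, any $\nu \in \a$ differs from $\nu_0$ by an element of this linear space; I would write $\nu = \nu_0 + \psi$ for a unique $\psi \in \Hom(\u_-, \C)$. The heart of the matter is to compute $u \cdot \nu_0$ for $u \in U$ and show that the resulting map $U \to \a$, $u \mapsto u \cdot \nu_0$, is a bijection onto $\a$. Concretely, I would compute $(u \cdot \nu_0)(\overline{f}_\alpha)$ using the $U$-action on $(\g\oplus\C)/\p$: since $u \in U$ acts by $\Ad_u$ (trivially on $\C$), I get $(u \cdot \nu_0)(\overline{f}_\alpha) = \overline{(\Ad_u f_\alpha, 0)}$ pushed through the appropriate coset identifications, and the difference from $\nu_0$ lands in $\Hom(\u_-, \C)$ after projecting to the $\C$-factor via the splitting $r$. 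The key computation is that $\psi_u(\overline{f}_\alpha) := r\big((u\cdot\nu_0 - \nu_0)(\overline{f}_\alpha)\big) = d\chi\big((\Ad_u f_\alpha)^\l\big)$, and I must show $u \mapsto \psi_u$ is a bijection $U \xrightarrow{\sim} \Hom(\u_-,\C)$.

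The main obstacle is establishing this bijectivity, and this is exactly where strict anti-dominance of $\chi$ enters. Writing $u = \exp(\eta)$ with $\eta \in \u_+$, I expect the leading-order term of $\Ad_u f_\alpha = f_\alpha + [\eta, f_\alpha] + \cdots$ to contribute $d\chi([\eta, f_\alpha]^\l)$ to $\psi_u(\overline{f}_\alpha)$. The bracket $[\u_+, \u_-]$ has an $\l$-component, and pairing it with $d\chi$ gives a pairing between $\u_+$ and $\u_-$ whose nondegeneracy is governed by the values $\langle \chi, \alpha^\vee\rangle$ for $\alpha \in \Psi$; strict anti-dominance ($\langle \chi,\alpha^\vee\rangle < 0$ on $\Psi$) is precisely what guarantees this pairing is nondegenerate and hence that the linearized map $\u_+ \to \Hom(\u_-,\C)$ is an isomorphism. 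To upgrade from the linearized statement to the full bijection, I would exploit the grading of $\u_+$ (and $\u_-$) by height: the map $u \mapsto \psi_u$ is triangular with respect to this filtration, with the diagonal blocks being the nondegenerate pairings just described, so bijectivity follows by an inductive or triangular-systems argument. I would expect to organize the root spaces by height and solve for the components of $\eta$ one graded piece at a time, the nondegeneracy at each stage coming from strict anti-dominance; uniqueness of $u$ follows from the freeness of this triangular system.
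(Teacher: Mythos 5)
Your proposal follows essentially the same route as the paper: part (a) is the observation that $\nu_0$ realizes the $L$-equivariant splitting \eqref{e:LsplitP}, and part (b) is solved by writing $u = \exp(E)$, grading $\u_+$ by root height, and observing that the resulting system for the coefficients of $E$ is triangular with diagonal entries $d\chi(h_\alpha) = \langle \chi, \alpha^\vee\rangle \cdot (\text{const}) \neq 0$ supplied by strict anti-dominance, exactly as in the paper's induction on height. The only quibble is your exact formula $\psi_u(\overline{f}_\alpha) = d\chi\big((\Ad_u f_\alpha)^\l\big)$: the correct expression involves $\Ad_{u^{-1}}$ (one must first project $\Ad_{u^{-1}} f_\alpha$ to $\u_-$, apply $\nu_0$, then push forward by $\Ad_u$), so the higher-order corrections differ from what you wrote --- but since they still depend only on the $c_\beta$ with $\operatorname{ht}\beta < \operatorname{ht}\alpha$, your triangularity argument is unaffected.
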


\begin{proof}
The statement of (a) holds simply because \eqref{e:LsplitP} is $L$-equivariant and hence $\nu_0$ arises from that map.

For the proof of (b) we need to introduce some further notation. For $\alpha \,\in\, \Phi_G^+$, choose root vectors
$e_\alpha \,\in\, \g_\alpha$, $f_\alpha \,\in\, \g_{-\alpha}$: we will set $h_\alpha \,:=\, [e_\alpha,\, f_\alpha]$. Then, upon
scaling one of $e_\alpha$ or $f_\alpha$, $(e_\alpha,\, h_\alpha,\, f_\alpha)$ would form an $\sl_2$-triple, but as it is, this
need not necessarily be the case. What is important is that with the assumption on $\chi$, 
\begin{align} \label{e:walpha}
w_\alpha := d\chi( h_\alpha) \neq 0.
\end{align}
For $i \in \Z$, we set
\begin{align*}
\g_i & := \,\sum_{\textnormal{ht} \, \alpha = i} \g_\alpha, \ i \,\neq\, 0 & \g_0 & :=\, \t\, ,
\end{align*}
where of course, if $\alpha \,\in\, \Phi_G^-$, then $\textnormal{ht} \, \alpha = - \textnormal{ht}(-\alpha)$. We will denote by $m$ the height of the highest (positive) root. Our argument will be by induction on the heights of the roots.

Let
\begin{align*}
E \,:=\, \sum_{\alpha \in \Psi} c_\alpha e_\alpha \in \u
\end{align*}
with $c_\alpha \,\in\, \C$. Let $u \,:=\, \exp E$; recall that $\exp \,:\, \u \,\longrightarrow\, U$ is an isomorphism. We let
\begin{align*}
E_i \,:=\, \sum_{\textnormal{ht} \, \alpha = i} c_\alpha e_\alpha
\end{align*}
be the $\g_i$-component of $E$, so that
\begin{align*}
E \,= \,\sum_{i=1}^m E_i\, .
\end{align*}

Fix $\alpha \in \Psi$ of height $r$. Then we have an expression
\begin{align*}
\Ad_{u^{-1}} f_\alpha = \Ad_{\exp(-E)} f_\alpha = f_\alpha - [E, f_\alpha] + \frac{1}{2!} [ E, [E, f_\alpha]] - \cdots
\end{align*}
which we may write as
\begin{align*}
\Ad_{u^{-1}} f_\alpha \,=\, f_\alpha + \sum_{i=-(r-1)}^m F_{\alpha, i}^u\, ,
\end{align*}
where $F_{\alpha, i}^u \in \g_i$ is the $\g_i$-component of $\Ad_{u^{-1}}$. For example,
\begin{align*}
F_{\alpha, -(r-1)}^u & = - [E_1, f_\alpha] & f_{\alpha, -(r-2)}^u & = - [E_2, f_\alpha]+ \frac{1}{2} [E_1, [E_1, f_\alpha]].
\end{align*}
In fact, it is not hard to see that 
\begin{align} \label{e:frecur}
F_{\alpha, 0}^u = - [ E_r, f_\alpha] + \zeta_\alpha( f_\alpha, E_1, \ldots, E_{r-1})
\end{align}
for some $\t$-valued function $\zeta_\alpha$ of $f_\alpha, E_1, \ldots, E_{r-1}$.

Given an arbitrary $\nu \,:\, \g/ \p \,\longrightarrow\, (\g \oplus \C)/ \p$ such that $\sigma \circ \nu \,=\, {\rm Id}_{\g/\p}$, we wish to show that we can choose $E$ (i.e., the $c_\alpha$) uniquely so that 
\begin{align*}
u \cdot \nu_0 \,=\, \nu\, .
\end{align*}
In terms of the basis $\{\overline{f}_\alpha \}_{\alpha \in \Psi}$ of $\g/\p$, $\nu$ takes the form
\begin{align*}
\nu( \overline{f}_\alpha ) \,=\, \overline{(f_\alpha, z_\alpha)}
\end{align*}
for some $z_\alpha \,\in\, \C$.

Now, since $U \leq \ker \chi$ and $\g_\alpha \,\subseteq\, [\l, \,\l]$ for all $\alpha \,\in\, \Phi_L$, one finds that $d\chi$ vanishes on $\g_\alpha$ for $\alpha \in \Phi_L \cup \Psi$ and hence we obtain the relation in $(\g \oplus \C)/\p$
\begin{align*}
\overline{(\zeta, 0)} \,= \,
\overline{\big(\zeta, - d\chi(\zeta)\big)} \,= \,\overline{(0,0)}\, ,
\end{align*}
for $\zeta \in \g_\alpha$, $\alpha \in \Phi_G^+$, which we will use repeatedly in what follows.

We observe that, modulo $\p$,
\begin{align*}
\overline{\Ad_{u^{-1}} f_\alpha} \,=\, \overline{f}_\alpha + \sum_{i=-(r-1)}^{-1} \overline{F}_{\alpha, i}^u
\end{align*}
so
\begin{align*}
(u \cdot \nu_0)( \overline{f}_\alpha) & = \Ad_u \nu_0 \left( \overline{\Ad_{u^{-1}} f_\alpha } \right) = \Ad_u \nu_0 \left( \overline{f}_\alpha + \sum_{i=-(r-1)}^{-1} \overline{F}_{\alpha, i}^u \right) = \Ad_u \overline{ \left( f_\alpha + \sum_{i=-(r-1)}^{-1} F_{\alpha, i}^u \right) } \\
& = \overline{ \left( \Ad_u \left( \Ad_{u^{-1}} f_\alpha - \sum_{i=0}^m F_{\alpha, i}^u \right), 0 \right) } = \overline{ \left( f_\alpha - \exp_{\ad E} \left( \sum_{i=0}^m F_{\alpha, i}^u \right), 0 \right) }.
\end{align*}
Now, we observe that
\begin{align*}
\exp_{\ad E} \left( \sum_{i=1}^m F_{\alpha, i}^u \right) \,\in\, \bigoplus_{\alpha \in \Phi_G^+} \g_\alpha
\end{align*}
so this simplifies to
\begin{align*}
\overline{ \left( f_\alpha - \exp_{\ad E} F_{\alpha, 0}^u, 0 \right) } & = \overline{ \left( f_\alpha - F_{\alpha,0}^u - [E, F_{\alpha,0}^u] - \frac{1}{2!} [ E, [E, F_{\alpha,0}^u]] - \cdots, 0 \right) } \\
& = \overline{\left( f_\alpha - F_{\alpha,0}^u, 0 \right)} = \overline{\left( f_\alpha + [E_r, f_\alpha] - \zeta_\alpha(f_\alpha, E_1, \ldots, E_{r-1} ), 0 \right) },
\end{align*}
using \eqref{e:frecur} at the end. Now, if $\textnormal{ht} \, \beta = r = \textnormal{ht} \, \alpha$, but $\beta \neq \alpha$, then $\beta - \alpha \not\in \Phi_G$, so $[e_\beta, f_\alpha] = 0$ and therefore
\begin{align*}
[E_r, \,f_\alpha]\, =\, \sum_{\textnormal{ht} \, \beta = r} c_\beta [ e_\beta,\, f_\alpha] \,= \,c_\alpha h_\alpha\, . 
\end{align*}
Thus,
\begin{align*}
(u \cdot \nu_0)( \overline{f}_\alpha) \,=\, \overline{ \left( f_\alpha + 
c_\alpha h_\alpha - \zeta_\alpha( f_\alpha, E_1, \ldots, E_{r-1} ), 0 \right) }
\end{align*}
In the case that $\textnormal{ht} \, \alpha \,=\, 1$, i.e., $\alpha$ is a simple root in $\Psi$, $\zeta_\alpha = 0$ and 
\begin{align*}
(u \cdot \nu_0)( \overline{f}_\alpha) \,=\,
\overline{ \left( f_\alpha + c_\alpha h_\alpha, 0 \right) }
\,=\, \overline{ \left( f_\alpha, c_\alpha d\chi(h_\alpha) \right) }
\,= \,\overline{ \left( f_\alpha, c_\alpha w_\alpha \right) }
\end{align*}
and using \eqref{e:walpha}, we can uniquely solve the equation $w_\alpha c_\alpha = z_\alpha$ for $c_\alpha$.

By induction on $\textnormal{ht} \, \alpha = r$, we may assume that all $c_\beta$ are determined for $\textnormal{ht} \, \beta < r$, and hence $e_1, \ldots, e_{r-1}$ are determined. Then
\begin{align*}
(u \cdot \nu_0)( \overline{f}_\alpha ) \,=\,
 \overline{ \left( f_\alpha, w_\alpha c_\alpha + d\chi( \zeta_\alpha
(f_\alpha, E_1, \ldots, E_{r-1} ) ) \right) }
\end{align*}
and again we can solve 
\begin{align*}
w_\alpha c_\alpha + d\chi( \zeta_\alpha (f_\alpha, E_1, \ldots, E_{r-1} ) ) \,=\, z_\alpha
\end{align*}
uniquely for $c_\alpha$.
\end{proof}

\begin{rmk} \label{r:unu}
In the proof of Lemma \ref{le:F}(b), we took an arbitrary element $\nu$ with the property that
\begin{align*}
\nu( \overline{f}_\alpha) \,=\, \overline{(f_\alpha, z_\alpha)}\, .
\end{align*}
We may rewrite $\nu = \nu_0 + \varphi$ with $\varphi \in (\g/\p)^\vee$ such that
\begin{align*}
\varphi( \overline{f}_\alpha) \,=\, z_\alpha\, .
\end{align*}

In Section \ref{s:twspGP}, it will be necessary for us to understand what happens when we multiply $\varphi$ by a scalar $\mu$. At the beginning of the induction, we needed to solve the equation $w_\alpha c_\alpha \,=\, z_\alpha$ for $c_\alpha$. Replacing $\varphi$ by $\mu \varphi$ means replacing $z_\alpha$ with $\mu z_\alpha$ and hence our new solution would be $\mu c_\alpha$ instead of $c_\alpha$.

Inductively, one wanted to solve the equation 
\begin{align*}
w_\alpha c_\alpha + d\chi \big( \zeta_\alpha(f_\alpha, E_1, \ldots, E_{r-1}) \big) = z_\alpha,
\end{align*}
where $\alpha$ is a root of height $r$. Replacing $\varphi$ by $\mu \varphi$, since
the expressions preceding \eqref{e:frecur} involve commutators of root
vectors for roots of height $\,<\, r$, one sees that
$$d\chi(\zeta_\alpha(f_\alpha, E_1, \ldots, E_{r-1}))$$ will be a
non-constant polynomial in $\mu$, with coefficients depending on the
$c_\beta$ previously found. But then, the equation above shows that the same
will be true for $c_\alpha$.
\end{rmk}

\begin{proof}[Proof of Proposition \ref{p:GmodLCP}]
The morphism $G/L \,\longrightarrow\, G \times^P \a$ is given in \eqref{e:GKmap} as
\begin{align} \label{e:GmodLiso}
gL \,\longmapsto\, [g,\, \nu_0]\, .
\end{align}
The inverse $G \times^P \a \,\longrightarrow\, G/L$ is given by
\begin{align*}
[g, \,\nu] \,\longmapsto\, guL\, ,
\end{align*}
where $u \in U$ is (the unique) such that $u \cdot \nu_0 = \nu$ (this is
the statement of Lemma \ref{le:F}(b)). It is straightforward to check that this is indeed well-defined and gives the inverse.
\end{proof}

We now record a computational result that will be used in the construction of the
twistor lines in Section \ref{s:twspGP}.

\begin{lem} \label{l:uactionnu0}
The action of the Lie algebra $\u$ on $\nu_0$ is given by
\begin{align*}
e_\alpha \cdot \nu_0 \,=\, s_\alpha e_\alpha
\end{align*}
for some non-zero constants $s_\alpha \in \C^\times$.
\end{lem}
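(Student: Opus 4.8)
The plan is to realize $e_\alpha\cdot\nu_0$ as the infinitesimal form of the $P$-action on the affine space $\a$ of \eqref{e:alam}, and then to evaluate it against the basis $\{\overline{f}_\beta\}_{\beta\in\Psi}$ of $\g/\p$. Since $\a$ is modelled on the $P$-module $\Hom(\g/\p,\C)=(\g/\p)^\vee$, the tangent vector $e_\alpha\cdot\nu_0$ lives in $(\g/\p)^\vee$; I identify the latter with $\u=\u_+$ by means of the Killing form, which restricts to a perfect pairing between $\u_+$ and $\u_-\cong\g/\p$ and sends $e_\gamma$ to a nonzero multiple of the functional dual to $\overline{f}_\gamma$. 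Differentiating the $P$-action on homomorphisms gives
\[
(e_\gamma\cdot\nu_0)(\overline{f}_\beta)\,=\,e_\gamma\cdot\big(\nu_0(\overline{f}_\beta)\big)-\nu_0\big(e_\gamma\cdot\overline{f}_\beta\big),
\]
where $e_\gamma$ acts on both $\g/\p$ and $(\g\oplus\C)/\p$ through the adjoint action on the $\g$-summand and trivially on the $\C$-summand (the latter because $\C^\times$ is abelian, so $\m=\C$ is a trivial $P$-module).

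To extract the $\C$-component I would compose with the retraction $r\colon(\g\oplus\C)/\p\to\C$ of \eqref{e:LsplitP}, which identifies $\ker\sigma\cong\C$ and so records $e_\gamma\cdot\nu_0$ as a genuine element of $(\g/\p)^\vee$. First one checks that $e_\gamma\cdot\nu_0$ indeed takes values in $\ker\sigma$, using $P$-equivariance of $\sigma$ together with $\sigma\circ\nu_0=\Id$. Next, since $\nu_0$ is $\u_-$-valued by \eqref{nu0}, one has $r\circ\nu_0=0$, so the second term above drops out after applying $r$ and
\[
r\big((e_\gamma\cdot\nu_0)(\overline{f}_\beta)\big)\,=\,d\chi\Big(\big([e_\gamma,f_\beta]\big)^\l\Big),
\]
with $[e_\gamma,f_\beta]\in\g_{\gamma-\beta}$. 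This $\l$-component is nonzero only when $\gamma-\beta\in\Phi_L\cup\{0\}$; but $d\chi$ annihilates $[\l,\l]+\u$, so every nonzero root contribution dies and only the diagonal term $\gamma=\beta$ survives, giving $d\chi(h_\gamma)=w_\gamma\neq 0$ by \eqref{e:walpha}. Hence $e_\gamma\cdot\nu_0$ is $w_\gamma$ times the element of $(\g/\p)^\vee$ dual to $\overline{f}_\gamma$, which under the identification above is $s_\gamma e_\gamma$ with $s_\gamma\in\C^\times$.

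The only real subtlety, and the point I would flag as the main obstacle, is the off-diagonal bookkeeping: when $\gamma-\beta\in\Psi^-$ the bracket $[e_\gamma,f_\beta]$ is a nonzero element of $\u_-$, so \emph{both} terms in the first displayed formula are individually nonzero and one might fear a contribution off the diagonal. The resolution is precisely that such terms are $\u_-$-valued and hence killed by $r$ (their $\l$-component vanishes and $r\circ\nu_0=0$), so they never enter the $\C$-valued functional $e_\gamma\cdot\nu_0$. Once this is understood the remaining steps are routine root-space computations, and the nonvanishing of $s_\gamma$ is exactly the strict anti-dominance hypothesis packaged in \eqref{e:walpha}.
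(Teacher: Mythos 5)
Your proposal is correct and follows essentially the same route as the paper: both start from the Leibniz-rule identity $(e_\alpha\cdot\nu_0)(\overline{f}_\beta)=\overline{([e_\alpha,f_\beta],0)}-\nu_0(\overline{[e_\alpha,f_\beta]})$ and reduce everything to the root-space bookkeeping in which $d\chi$ kills $[\l,\l]+\u$ and only the diagonal term $d\chi(h_\alpha)=w_\alpha\neq 0$ of \eqref{e:walpha} survives. The only cosmetic difference is that you dispose of the second term uniformly by composing with the retraction $r$ of \eqref{e:LsplitP} and using $r\circ\nu_0=0$, where the paper instead cancels the two terms against each other case by case; the content is the same.
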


\begin{proof}
Observe that
\begin{align} \label{e:uactionnu}
(e_\alpha \cdot \nu_0)( \overline{f}_\beta)\,=\,
 e_\alpha \cdot \nu_0(\overline{f}_\beta) - \nu_0( \overline{[e_\alpha, f_\beta]}) 
\,=\, \overline{( [e_\alpha, f_\beta], 0)} - \nu_0(\overline{[e_\alpha, f_\beta]})\, .
\end{align}
We consider cases. If $\alpha \,=\, \beta$, $[e_\alpha, f_\alpha]\, =\, h_\alpha \,\in\, \t$ and so then the second term vanishes and the first is
\begin{align*}
\overline{(h_\alpha, 0)} \,=\, \overline{\big(0, d\chi(h_\alpha) \big)} \,=\, d\chi(h_\alpha)
\overline{f}_\alpha^*(\overline{f}_\alpha).
\end{align*}
If $\beta - \alpha \in \Phi_G^+$, then $[e_\alpha, f_\beta] = N_{\alpha \beta} f_{\beta - \alpha}$ for some constant $N_{\alpha \beta}$, so the two terms in \eqref{e:uactionnu} cancel each other out. Finally, if $\beta - \alpha \not\in \Phi_G^+$, then $[e_\alpha, f_\beta] = 0$, so both terms in \eqref{e:uactionnu} are zero.
\end{proof}

\section{Twistor spaces} \label{s:twistor}

\subsection{Real structures on complexifications of (partial) flag varieties} \label{s:realstr}

We will use the notation set at the beginning of Section \ref{ss:ucfv}. The real
structure for our twistor space will come from a compact real form on the Lie algebra
of $G$, so therefore we will record some notation and facts that we will need. Let $d\tau_G \,:\, \g \,\longrightarrow\, \g$ be a compact real form of $\g$; this will integrate to a conjugate linear involution $\tau_G \,:
\, G \,\longrightarrow\, G$, which is a morphism of the underlying real algebraic groups, with the property that 
\begin{align*}
K := \{ x \in G \, \mid \, \tau_G(x) \,=\, x \}
\end{align*}
is a maximal compact subgroup of $G$; we will write $\k$ for the Lie algebra of $K$.

Since any two compact real forms are related by a conjugation, and the same is true of maximal tori, we may in fact take 
the compact real form $d\tau_G$ to be compatible with the root system in the sense that
\begin{align*}
d\tau_G(\g_\alpha) = \g_{-\alpha}
\end{align*}
for all $\alpha \in \Phi_G$ (see, e.g., \cite[Chapter III, proof of Theorem 6.3]{Helgason}). Therefore, for positive roots $\alpha \in \Phi_G^+$, we may choose root vectors
\begin{align*}
e_\alpha & \in \g_\alpha & f_\alpha & \in \g_{-\alpha} 
\end{align*}
so that
\begin{align*}
d\tau_G(e_\alpha) & = f_\alpha & d\tau_G(f_\alpha) & = e_\alpha.
\end{align*}
Of course, we will have chosen the parabolic subgroup $P \leq G$ to be a standard parabolic for this root system, and we choose a Levi subgroup $L \leq P$ whose Lie algebra $\l$ is a sum of root spaces. Then $\l$ will be $d\tau_G$-invariant and hence $L$ is $\tau_G$-invariant.

Let $F \,:= \,P \cap K$. Since unipotent groups have no non-trivial compact subgroups, $U \cap K = \{ e \}$, and so $F \leq L$ and hence
\begin{align} \label{e:PK}
F = L \cap K.
\end{align}
In fact, $F$ will be a maximal compact subgroup of $L$ and one has
\begin{align*}
X \,=\, G/P \,=\, K/F\, .
\end{align*}
Furthermore, for any $P$-variety $T$, we have an identification
\begin{align} \label{GPKF}
G \times^P T \,\cong\, K \times^F T\, .
\end{align}

We will make use of some further properties of $\tau_G$ later in Section \ref{s:twspGP} that we record here for convenience. One is that it commutes with the exponential map: for $\xi \in \g$, one has
\begin{align}\label{e:exptau}
\tau_G \big( \exp(\xi) \big) \,= \,\exp\big( d\tau_G(\xi) \big)\, .
\end{align}
This follows simply because $\tau_G$ is a homomorphism of the underlying real Lie groups.
The second fact is the following: if $\chi \,:\, L \,\longrightarrow\, \C^\times$ is a character, then $d\chi \,:\, \l \,\longrightarrow\, \C$ satisfies 
\begin{align} \label{e:drealstr}
d\chi\big( d\tau_G(\xi) \big) \,=\, - \overline{ d\chi(\xi) }\, .
\end{align}
This can be justified as follows. Since $K \,\leq\, G$ is compact, $\chi(K)$ must be a compact subgroup of $\C^\times$, so $\chi(K)
\,\leq\, S^1$. As the Lie algebra of $S^1$ is $\sqrt{-1} \R \,\subseteq\, \C$, if we write $\g \,=\, \k \oplus
\sqrt{-1}\k$, then $d\chi(\k) \,\subseteq\, \sqrt{-1}\R$; since $d\chi$ is $\C$--linear,
we have $d\chi(\sqrt{-1}\k) \,\subseteq \,\R$. Now, if $\xi \,=\,
A + \sqrt{-1}B \,\in\, \g$, with $A$, $B \,\in\, \k$, we have
$$
- \overline{d\chi(A+\sqrt{-1}B)} \,=\, - \overline{ \big( d\chi(A) +
\sqrt{-1} d\chi(B) \big)}\,=\,
 d\chi(A) - \sqrt{-1} d\chi(B)
$$
$$
=\, d\chi( A - \sqrt{-1}B) \,=\, d\chi \circ d\tau_G(A + \sqrt{-1}B)\, . 
$$

Using the notation of Section \ref{s:univlamcxn}, and in view of Proposition \ref{p:GmodLCP}, let $C \,:=\, G/L$.

\begin{prop} \label{t:cxfnGmodL}
One has an inclusion $\iota \,:\, X \hookrightarrow C$ as a totally real submanifold. The real structure $\tau_G$ descends to one $\tau_C \,:\, C \,\longrightarrow\, C$ for which $X \,=\,
\iota(X) \,=\, C^{\tau_C}$ is the set of the fixed points of $\tau_C$.
\end{prop}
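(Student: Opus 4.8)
The plan is to write down $\iota$ and $\tau_C$ explicitly from the coset presentations $X = G/P = K/F$ and $C = G/L$, and then to identify the fixed locus of $\tau_C$ by means of the Cartan (polar) decomposition of $G$ with respect to $K$. I define $\iota \colon K/F \to G/L$ by $kF \mapsto kL$; since $F = L \cap K$ by \eqref{e:PK}, this is well defined and injective, because $k_1 L = k_2 L$ forces $k_2^{-1}k_1 \in L \cap K = F$. As $K/F$ is compact and $G/L$ is Hausdorff, $\iota$ is a topological embedding, to be identified below with the inclusion of a smooth submanifold. Because $\l$ is $d\tau_G$-invariant, $L$ is $\tau_G$-invariant, so the conjugate-linear involution $\tau_G$ descends to an anti-holomorphic involution $\tau_C \colon gL \mapsto \tau_G(g)L$ of $C$; well-definedness uses only $\tau_G(L) = L$ together with the homomorphism property of $\tau_G$. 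Since $\tau_G$ fixes $K$ pointwise, $\tau_C(kL) = kL$, giving the easy inclusion $\iota(X) \subseteq C^{\tau_C}$.

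The substance of the argument, and the main obstacle, is the reverse inclusion $C^{\tau_C} \subseteq \iota(X)$. Given $gL$ with $\tau_C(gL) = gL$, I would invoke the Cartan decomposition $g = k\exp(\xi)$ with $k \in K$ and $\xi \in \sqrt{-1}\k$, recalling that $\exp$ restricts to a diffeomorphism $\sqrt{-1}\k \to \exp(\sqrt{-1}\k)$ and that $G = K\cdot\exp(\sqrt{-1}\k)$. Using \eqref{e:exptau} and $\tau_G(k) = k$ one finds $\tau_G(g) = k\exp(-\xi)$, so the fixed-point condition $g^{-1}\tau_G(g) \in L$ reduces to $\exp(-2\xi) \in L$. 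It remains to promote this to $\xi \in \l$, and the crux is the decomposition identity $L \cap \exp(\sqrt{-1}\k) = \exp(\sqrt{-1}\mathfrak{f})$, where $\mathfrak{f} := \l \cap \k$ is the Lie algebra of $F$ and $\sqrt{-1}\mathfrak{f} = \l \cap \sqrt{-1}\k$. The inclusion $\supseteq$ is immediate; for $\subseteq$ I would compare the Cartan decomposition $L = F\cdot\exp(\sqrt{-1}\mathfrak{f})$ of the reductive group $L$ with the uniqueness of the Cartan decomposition of $G$, noting that an element of $\exp(\sqrt{-1}\k)$ lying in $L$ must have trivial $F$-part, forcing its logarithm into $\sqrt{-1}\mathfrak{f}$. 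Granting this, injectivity of $\exp$ on $\sqrt{-1}\k$ upgrades $\exp(-2\xi) \in \exp(\sqrt{-1}\mathfrak{f})$ to $-2\xi \in \sqrt{-1}\mathfrak{f} \subseteq \l$, so $\xi \in \l$, $\exp(\xi) \in L$, and $gL = k\exp(\xi)L = kL \in \iota(X)$, establishing $C^{\tau_C} = \iota(X)$.

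Finally, for the totally real assertion I would use the standard fact that the fixed locus of an anti-holomorphic involution is a totally real submanifold: at a fixed point $p$, the differential $d\tau_C$ is a conjugate-linear involution of the complex tangent space $T_{p}C$, and its real $(+1)$-eigenspace is totally real of real dimension equal to $\dim_\C C$; the tangent space to $C^{\tau_C} = \iota(X)$ is contained in, hence equals, this eigenspace. The dimension count $\dim_\R K/F = \dim_\C G - \dim_\C L = \dim_\C G/L$ confirms that $\iota(X)$ is a totally real submanifold of maximal dimension (a real form), and in particular that $\iota$ is a diffeomorphism onto the smooth fixed-point submanifold $C^{\tau_C}$.
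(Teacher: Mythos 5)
Your proposal is correct and follows essentially the same route as the paper: the map $\iota$ and the involution $\tau_C(gL)=\tau_G(g)L$ are defined identically, and the crucial reverse inclusion $C^{\tau_C}\subseteq\iota(X)$ is obtained, just as in the paper, from the Cartan decomposition $g=k\exp(\xi)$ with $\xi\in\sqrt{-1}\k$, reducing the fixed-point condition to $\exp(-2\xi)\in L$ and then using compatibility of the Cartan decompositions of $L$ and $G$ (your identity $L\cap\exp(\sqrt{-1}\k)=\exp(\sqrt{-1}\mathfrak{f})$ is exactly the point the paper invokes) to conclude $\xi\in\l$. Your treatment of the totally real claim via the fixed locus of an anti-holomorphic involution together with the dimension count is a slightly more explicit version of the paper's ``check infinitesimally at the level of Lie algebras,'' but it is not a genuinely different argument.
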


\begin{proof}
{}From \eqref{e:PK}, the inclusion $K \,\hookrightarrow\, G$ induces an injective map $\iota \,:\, 
X \,\longrightarrow\, C$. Since $K$ is compact, the image is closed and the fact that it is a totally real immersion can be checked infinitesimally at the level of Lie algebras.

The real structure $\tau_C \,:\, C \,\longrightarrow\, C$ is induced from $\tau_G \,:\, G\,\longrightarrow\, G$, namely, $\tau_C(gL) \,:= \,\tau_G(g) L$. It is well-defined precisely because $\tau_G$ is a group homomorphism and $L$ is $\tau_G$-invariant.

Since $\tau_G$ fixes $K$ pointwise, it follows immediately that $\tau_C$ fixes $\iota(X)$ pointwise. Note that $\tau_G$ acts on $\g/\k$ as multiplication by $-1$. This implies that $\iota(X)$ is a connected component of the fixed point locus $C^{\tau_C}$. For $g\, \in\, G$, if $g\, =\, k\exp(\sqrt{-1}v)$ is the Cartan decomposition, where $k\, \in\, K$ and $v\, \in\, \k$, then $\tau_G(g)\,=\, k\exp(-\sqrt{-1}v)$. Therefore, if $\tau_G(g)\,=\, g\ell$, where $\ell\, \in\, L$, then
\begin{equation}\label{z1}
\ell\,=\, \exp(-2\sqrt{-1}v)\, .
\end{equation}
Since the Cartan decomposition of $L$ is the restriction of the Cartan decomposition of $G$ to $L$, from \eqref{z1} it follows that $v\, \in\, \l$, and hence $\exp(\sqrt{-1}v)\, \in\, L$. From this it follows that $C^{\tau_C}\, \subset\,\iota(X)$. Hence we have $C^{\tau_C}\, =\,\iota(X)$.
\end{proof}

\begin{rmk}
$C$ is a good complexification of $X$ in the sense of \cite[p.~69]{Totaro}.
\end{rmk}

\subsection{Generalities for construction of twistor spaces} \label{s:twistorgen}

There are some general remarks in \cite[\S~4]{Simpson} which indicate how to construct a
twistor space for a hyper-K\"ahler metric.

Suppose we are given a complex manifold $Z^\circ$ with a surjective submersion $\pi^\circ \,:
\, Z^\circ \,\longrightarrow\, \C$; we set $Z^\times \,:= \,(\pi^\circ)^{-1}(\C^\times)$. Suppose further that we are given an anti-holomorphic involution $\tau^\circ \,:\, Z^\times
\,\longrightarrow\, Z^\times$ such that
\begin{align} \label{e:Zstarreal}
\vcenter{
\commsq{ Z^\times }{ Z^\times }{ \C^\times }{ \C^\times }{ \tau^\circ }{ \pi^\circ }{ \pi^\circ }{ \sigma^\circ } }
\end{align}
commutes, where $\sigma^\circ \,:\, \C^\times \,\longrightarrow\, \C^\times$ is 
\begin{align*}
\sigma^\circ(\lambda) \,=\, - \overline{\lambda}^{-1}\, .
\end{align*}
Let $Z_i^\circ \,:=\, Z^\circ \times \{ i \}$ for $i \,=\, 0$, $1$ and set $Z \,:=\, (Z_0^\circ \coprod \overline{Z}_1^\circ) /
\sim$, where $[z,\, i] \,\sim\, [\tau^\circ(z),\, 1-i]$ for $z \,\in\, Z_i^\times$. Verification of the following statements is
a straightforward exercise.

\begin{lem} \label{t:Zrealstr}\mbox{}
\begin{enumerate}
\item[(a)] $Z$ is Hausdorff and hence a complex manifold. There is a surjective submersion $\pi \,:\,
 Z \,\longrightarrow\, \P^1$ and an anti-holomorphic involution $\tau \,:\, Z \,\longrightarrow\, Z$ such that
\begin{align} \label{ZP1}
\vcenter{ \commsq{ Z }{ Z }{ \P^1 }{ \P^1 }{ \tau }{ \pi }{ \pi }{ \sigma } }
\end{align}
commutes, where $\sigma \,:\, \P^1 \,\longrightarrow\, \P^1$ is the antipodal map.

\item[(b)] Suppose further that we are given a section $\eta^\circ \,:\, \C
\,\longrightarrow\, Z^\circ$ of $\pi^\circ$, i.e., $\pi^\circ \circ \eta^\circ \,=\,
{\rm Id}_\C$, and such that
\begin{align*}
\xymatrix{
Z^\times \ar[r]^{\tau^\circ} & Z^\times \\
\C^\times \ar[u]^{\eta^\circ} \ar[r]_{\sigma^\circ} & \C^\times \ar[u]_{\eta^\circ} }
\end{align*}
commutes. Then there is a well-defined section $\eta \,:\, \P^1 \,\longrightarrow\, Z$ such that
\begin{align} \label{e:Zsn}
\vcenter{
\xymatrix{
Z \ar[r]^{\tau} & Z \\
\P^1 \ar[u]^\eta \ar[r]_\sigma & \P^1 \ar[u]_\eta }
}
\end{align}
commutes.
\end{enumerate}
\end{lem}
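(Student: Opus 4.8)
The plan is to realize $\P^1$ as two copies of $\C$ glued along $\C^\times$ and to build $Z$ as the corresponding gluing of $Z_0^\circ$ and $\overline{Z}_1^\circ$, reading off both $\pi$ and the antipodal map $\sigma$ from the construction. Write $U_0,\,U_1\,\cong\,\C$ for two affine charts of $\P^1$, with coordinates $\lambda$ and $\mu$. Over $U_0$ I place $Z_0^\circ$ with its map $\pi^\circ$, and over $U_1$ the conjugate manifold $\overline{Z}_1^\circ$ with the map $\overline{\pi^\circ}\,:\,z\mapsto\overline{\pi^\circ(z)}$, which is holomorphic on $\overline{Z}_1^\circ$. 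The point of passing to the conjugate is that the anti-holomorphic involution $\tau^\circ$ becomes a \emph{holomorphic} isomorphism from $Z_0^\times\subset Z_0^\circ$ onto $Z_1^\times\subset\overline{Z}_1^\circ$, so the gluing defining $Z$ is by a biholomorphism of open sets. First I would check that $\pi$, defined chart-wise by $\pi^\circ$ and $\overline{\pi^\circ}$, is consistent on the overlap: for $z\in Z^\times$ the identification $[z,0]\sim[\tau^\circ(z),1]$ forces $\mu\,=\,\overline{\pi^\circ(\tau^\circ(z))}\,=\,\overline{\sigma^\circ(\lambda)}\,=\,-\lambda^{-1}$ by \eqref{e:Zstarreal}. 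Thus the transition of $\P^1$ is $\mu=-\lambda^{-1}$, and tracing a point of $U_0\cap U_1$ shows the map which in the $U_0$-chart reads $\lambda\mapsto-\overline{\lambda}^{-1}=\sigma^\circ(\lambda)$ is a globally defined fixed-point-free anti-holomorphic involution, i.e.\ the antipodal map $\sigma$.

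The one genuinely topological point, and the step I expect to require the most care, is Hausdorffness: a space glued along open subsets is Hausdorff precisely when the graph of the gluing map is closed. Here the only possible failure is a sequence $z_n\in Z_0^\times$ converging to the special fibre $(\pi^\circ)^{-1}(0)\subset Z_0^\circ$ whose images $\tau^\circ(z_n)$ converge in $\overline{Z}_1^\circ$. But if $\lambda_n=\pi^\circ(z_n)\to 0$ then these images have base coordinate $\mu_n=-\lambda_n^{-1}\to\infty$; were they to converge to some $w$, continuity of $\overline{\pi^\circ}$ would force $\overline{\pi^\circ}(w)=\infty\notin\C$, which is absurd, so they cannot converge, and the symmetric statement holds as well. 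This is exactly where the form $\sigma^\circ(\lambda)=-\overline{\lambda}^{-1}$, interchanging $0$ and $\infty$, is used. Granting this, $Z$ is a complex manifold (locally Euclidean with holomorphic transition), and $\pi$ is a surjective submersion since $\pi^\circ$ and $\overline{\pi^\circ}$ are and their images cover $\P^1$. For the real structure I would set $\tau[z,i]:=[z,1-i]$, the identity on underlying points but switching the copy and hence the complex structure; it descends to the quotient because $\tau^\circ$ is an involution, so $[z,1-i]$ and $[\tau^\circ(z),i]$ name the same point, it is an anti-holomorphic involution since $Z_0^\circ\to\overline{Z}_1^\circ$ is the identity into the conjugate, and a direct check gives $\pi\circ\tau=\sigma\circ\pi$, which is \eqref{ZP1}.

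For part (b) the plan is to define $\eta$ chart-wise and glue. Over $U_0$ set $\eta(\lambda):=[\eta^\circ(\lambda),0]$, a section since $\pi^\circ\circ\eta^\circ=\mathrm{Id}_\C$; over $U_1$ set $\eta(\mu):=[\eta^\circ(\overline{\mu}),1]$, which is holomorphic because $\mu\mapsto\overline{\mu}$ is holomorphic as a map into the conjugate $\overline{Z}_1^\circ$, and is a section since $\overline{\pi^\circ(\eta^\circ(\overline{\mu}))}=\mu$. The hypothesis $\tau^\circ\circ\eta^\circ=\eta^\circ\circ\sigma^\circ$ on $\C^\times$ is exactly what makes the two definitions agree on the overlap: for $\lambda\in\C^\times$ with $\mu=-\lambda^{-1}$ one has $[\eta^\circ(\lambda),0]=[\tau^\circ(\eta^\circ(\lambda)),1]=[\eta^\circ(\sigma^\circ(\lambda)),1]=[\eta^\circ(\overline{\mu}),1]$, using $\overline{\mu}=\sigma^\circ(\lambda)$. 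Hence $\eta\,:\,\P^1\to Z$ is a well-defined holomorphic section of $\pi$, and comparing $\tau(\eta(\lambda))=[\eta^\circ(\lambda),1]$ with $\eta(\sigma(\lambda))=[\eta^\circ(\lambda),1]$ (recalling $\sigma(\lambda)$ has $U_1$-coordinate $\overline{\lambda}$) verifies the commutativity \eqref{e:Zsn}.
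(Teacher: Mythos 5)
Your proof is correct and carries out exactly the verification the paper leaves as a "straightforward exercise" (no proof is given in the text): gluing $Z_0^\circ$ to the conjugate manifold $\overline{Z}_1^\circ$ along $\tau^\circ$, reading off the transition $\mu=-\lambda^{-1}$ from \eqref{e:Zstarreal}, checking Hausdorffness via closedness of the graph of the gluing (using that $\sigma^\circ$ swaps $0$ and $\infty$), and defining $\tau$ and $\eta$ chart-wise. All the individual computations (the identification of $\sigma$ with the antipodal map, the well-definedness of $\tau$, the overlap condition for $\eta$, and the commutativity of \eqref{ZP1} and \eqref{e:Zsn}) check out.
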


\subsection{Twistor spaces for cotangent bundles of homogeneous spaces} \label{s:twspGP}

We take up again the notation of Section \ref{ss:ucfv} and let $X$ be the homogeneous space $G/P$. We choose a strictly anti-dominant character $\chi$ of $P$, for which there is an associated $\C^\times$-bundle, which puts us in the situation of Section \ref{s:Cstarbdls}. (Note that this is also equivalent to choosing a (very) ample line bundle on $X$, and thus fixing a K\"ahler structure on $X$.) The extension \eqref{e:Cstarext} is given by the sequence of vector bundles associated to the lower sequence of $P$-representations in \eqref{e:RepOmega}. Furthermore, the total space $Z^\circ \,=\, \Tot(W_P)$ (see \eqref{e:Cstarext}) has the description
\begin{align} \label{e:Zcirc}
Z^\circ \,= \, G_P \times^P \w\, .
\end{align}

\begin{thm} \label{t:twspGP}
In this situation, $Z^\circ$ gives one patch of the twistor space for a
hyper-K\"ahler metric on $T^*X$ (or equivalently, on the universal connection
space for the frame bundle of the ample line bundle chosen, which may
be identified with $G/L$ by Proposition \ref{p:GmodLCP}). Furthermore, it is clear from the above description that $Z^\circ$ is algebraic.
\end{thm}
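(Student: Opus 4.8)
The plan is to assemble the data required by the fundamental theorem of \cite[Theorem 3.3]{HKLR} and to exhibit $Z^\circ$ as one of the two coordinate charts of the resulting twistor space $\pi\,:\,Z\,\longrightarrow\,\P^1$. Concretely, I would realize $Z$ by gluing $Z_0^\circ := Z^\circ\times\{0\}$ to the conjugate $\overline{Z}_1^\circ$ over the overlap $\C^\times$ exactly as in the generalities of Section \ref{s:twistorgen}; the content of the theorem is then that the hypotheses of Lemma \ref{t:Zrealstr} are met and that the glued object carries the twisted fibrewise symplectic form and the family of real twistor lines demanded by \cite{HKLR}. The special fibres sit over $0$ and $\infty$ as $C_0 = T^*X$ and its conjugate, while every other fibre is $C_\lambda \cong C \cong G/L$ by Proposition \ref{p:GmodLCP}.

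First I would construct the anti-holomorphic involution $\tau^\circ\,:\,Z^\times\,\longrightarrow\,Z^\times$ required by \eqref{e:Zstarreal}. Using the identification $\gamma\,:\,Z^\times\,\xrightarrow{\sim}\,C\times\C^\times$ of Lemma \ref{t:Zstariso} together with the real structure $\tau_C$ on $C = G/L$ from Proposition \ref{t:cxfnGmodL}, I would set $\tau^\circ := \gamma^{-1}\circ(\tau_C\times\sigma^\circ)\circ\gamma$, where $\sigma^\circ(\lambda) = -\overline{\lambda}^{-1}$. This is manifestly anti-holomorphic, and since $\tau_C$ and $\sigma^\circ$ are involutions so is $\tau^\circ$; by construction it covers $\sigma^\circ$ on the base, so \eqref{e:Zstarreal} commutes. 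The subtle point --- and the reason $\tau^\circ$ is defined only on $Z^\times$ rather than on all of $Z^\circ$ --- is the inversion $\lambda\mapsto\lambda^{-1}$, which comes precisely from the rescaling by $\lambda^{-1}$ built into $\gamma$ (cf.\ \eqref{e:gammadef}), while the sign and conjugation are forced by the behaviour \eqref{e:drealstr} of $d\chi$ under the compact real form. With $\tau^\circ$ in hand, Lemma \ref{t:Zrealstr}(a) produces $\pi\,:\,Z\,\longrightarrow\,\P^1$ with the real structure $\tau$ covering the antipodal map, and part (b) produces, for each real point $x\in X = C^{\tau_C}$, a real section $\eta_x\,:\,\P^1\,\longrightarrow\,Z$; these are the twistor lines meeting the zero section of $C_0 = T^*X$, and allowing all nearby real sections gives the $4n$-dimensional parameter space ($n := \dim_\C X$) on which the metric will live.

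The fibrewise holomorphic symplectic form is supplied by $\omega^\circ\in H^0(Z^\circ,\,\Omega^2_{Z^\circ/\C})$ of Proposition \ref{p:Cstarrelform}. I would check, using Lemma \ref{l:gammapullback} (which records $\omega^\circ = \lambda\cdot\gamma^* p^*\omega_1^\circ$ on $Z^\times$) and the reality \eqref{e:drealstr}, that $\omega^\circ$ on $Z_0^\circ$ and its conjugate on $\overline{Z}_1^\circ$ glue via $\tau^\circ$ into a global section of $\Omega^2_{Z/\P^1}\otimes\pi^*\O(2)$: the linear dependence on $\lambda$ recorded in Lemma \ref{l:gammapullback}, together with its conjugate on the other chart, produces exactly the degree-two transition cocycle of $\O(2)$ across the antipodal gluing. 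The remaining and hardest verification is that the real sections have normal bundle $\O(1)^{\oplus 2n}$, the positivity condition of \cite{HKLR}. By the $G$- (equivalently $K$-) equivariance of the whole construction via \eqref{GPKF} it suffices to analyse a single line; here the computation of the $\u$-action on $\nu_0$ in Lemma \ref{l:uactionnu0} and the polynomial dependence on the fibre scaling recorded in Remark \ref{r:unu} control how the section deforms, and strict anti-dominance of $\chi$ (which guarantees $w_\alpha = d\chi(h_\alpha)\neq 0$) is what forces each summand of the normal bundle to have degree $+1$ rather than $\leq 0$. Granting this, \cite[Theorem 3.3]{HKLR} yields a hyper-K\"ahler metric on the space of real sections, which is $T^*X\cong G/L$.

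Finally, the algebraicity is immediate from \eqref{e:Zcirc}: $Z^\circ = G_P\times^P\w$ is the total space of the algebraic vector bundle over the projective variety $G/P$ associated to the finite-dimensional algebraic $P$-representation $\w$, hence is a quasi-projective algebraic variety. I expect the normal-bundle computation to be the main obstacle; the construction of $\tau^\circ$ and the gluing are essentially formal given the earlier lemmas, but pinning down the degrees in the normal bundle is where the Lie-theoretic input --- especially the strict anti-dominance of $\chi$ --- must be used in an essential way.
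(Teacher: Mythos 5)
Your plan follows the paper's proof essentially step for step: the involution $\tau^\circ = \gamma^{-1}\circ(\tau_C\times\sigma^\circ)\circ\gamma$, the gluing via Lemma \ref{t:Zrealstr}, the form $\omega^\circ$ promoted to a section of $\Omega^2_{Z/\P^1}\otimes\pi^*\O(2)$ by the $\lambda$-linearity recorded in Lemma \ref{l:gammapullback}, twistor lines through every point of $T^*X$, and the normal-bundle computation as the main labour. The outline is correct, but two of the steps you treat as routine carry most of the content, and one reduction you invoke does not work as stated.

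First, gluing $\omega^\circ$ with $\overline{\omega}^\circ$ into an $\O(2)$-twisted relative form needs the identity $\tau_C^*\omega_1^\circ = -\overline{\omega}_1^\circ$, and this does not follow formally from \eqref{e:drealstr}: the paper proves it by identifying $\omega_1^\circ$ as the curvature of the canonical connection on the pullback of the $\C^\times$-bundle to $G/L$, computing the connection $1$-form $\theta$ explicitly in coordinates on the open cell $U_-U_+L\subseteq G/L$ (equation \eqref{e:theta}), checking $\tau_C^*\theta=-\overline{\theta}$ there using that $d\tau_G$ exchanges $\u_+$ and $\u_-$ and preserves $\l$, and then propagating the identity from this dense open set by an algebraicity argument. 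Second, your claim that $K$-equivariance reduces the normal-bundle computation to a single twistor line is too strong: $K$ acts transitively on the zero section $X=K/F$ but not on $T^*X$, so the lines $s_{[k,\varphi]}$ for varying $\varphi$ are genuinely different, and the paper carries out the computation for arbitrary $\varphi$, with the upper-triangular structure of the transition matrix and the nonvanishing diagonal entries (coming from Lemma \ref{l:uactionnu0} and \eqref{e:walpha}) doing the work uniformly in $\varphi$. Relatedly, even writing down the twistor lines over the chart at infinity already requires Lemma \ref{le:F}(b) (the unique $u(\mu,\varphi)$ with $u\cdot\nu_0=\nu_0+\mu\varphi$) and the polynomiality in $\mu$ of Remark \ref{r:unu}, so that $\lambda\mapsto[k,\lambda\nu_0+\varphi]$ extends holomorphically across $\mu=0$; this verification is prior to, not part of, the normal-bundle estimate.
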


\begin{rmk}
It is known that if $X$ is a K\"ahler manifold, then there is a neighbourhood of the zero section in $T^*X$ on which there exists a hyper-K\"ahler metric \cite[Theorem A]{Feix}, \cite[Theorem 1.1]{Kaledin}. In our situation, where $X = G/P$ as above, the above states that the hyper-K\"ahler metric in fact exists on the entirety of the cotangent bundle, which does not necessarily hold in general (cf.~\cite[Theorem B]{Feix}).
\end{rmk}

We note that the proof of Theorem \ref{t:twspGP} involves applying the fundamental 
theorem \cite[Theorem 3.3]{HKLR} characterizing twistor spaces for hyper-K\"ahler 
metrics. We state it here so that, in terms of proof, what is required of us is 
clear.

\begin{thm}
Let $Z$ be a complex manifold of dimension $2n+1$ with a map $\pi \,: \,Z \,\longrightarrow\, \P^1$ making it into a holomorphic fibre bundle. We further assume that 
\begin{enumerate}
\item[(a)] $Z$ admits a real structure $\tau \,:\, Z \,\longrightarrow\, Z$ inducing the antipodal map on $\P^1$;

\item[(b)] $\pi$ admits a family of holomorphic sections $s_\beta \,: \,\P^1 \,
\longrightarrow\, Z$, often referred to as \emph{twistor lines}, each with normal bundle
$\O_{\P^1}(1)^{\oplus 2n}$;

\item[(c)] there exists relative symplectic form $\omega \in \Gamma(Z, \Omega_{Z/\P^1}^2) \otimes \pi^*\O_{\P^1}(2)$, which is compatible with $\tau$ in the sense that $\tau^* \omega = \overline{\omega}$.
\end{enumerate}
Then $Z$ is in fact the twistor space for a hyper-K\"ahler metric on any of the fibres of $\pi$.
\end{thm}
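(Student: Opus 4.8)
The plan is to realize the hyper-K\"ahler manifold as a parameter space of twistor lines and to extract its metric structure directly from the data $(\tau, \{s_\beta\}, \omega)$. First I would let $M$ denote the space of $\tau$-invariant holomorphic sections $s \,:\, \P^1 \,\longrightarrow\, Z$ of $\pi$, i.e.\ those satisfying $s \circ \sigma = \tau \circ s$. By Kodaira's deformation theory, because the normal bundle $N_s \,\cong\, \O_{\P^1}(1)^{\oplus 2n}$ from hypothesis (b) has $H^1(\P^1,\, N_s) \,=\, 0$, the holomorphic sections form a smooth complex family of dimension $h^0(\P^1,\, N_s) \,=\, 4n$ whose tangent space at $s$ is canonically $H^0(\P^1,\, N_s)$; imposing $\tau$-invariance cuts this down to a real-analytic manifold $M$ of real dimension $4n$, with $T_s M$ the space of real points of $H^0(\P^1,\, N_s)$ for the induced real structure.

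Next I would produce the $S^2$-family of complex structures. For each $\lambda \in \P^1$, evaluation $\ev_\lambda \,:\, M \,\longrightarrow\, Z_\lambda \,:=\, \pi^{-1}(\lambda)$, $s \mapsto s(\lambda)$, has derivative the fibrewise evaluation $H^0(\P^1,\, N_s) \,\longrightarrow\, (N_s)_\lambda \,\cong\, T_{s(\lambda)} Z_\lambda$; since $N_s \,\cong\, \O_{\P^1}(1)^{\oplus 2n}$ this is surjective, with kernel the sections vanishing at $\lambda$. A real such section would, by $\tau$-invariance, also vanish at the distinct antipodal point $\sigma(\lambda)$ and hence be zero, so $\ev_\lambda$ is a local diffeomorphism of the real $4n$-manifold $M$ onto the fibre. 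Pulling back the complex structure of $Z_\lambda$ -- integrable, since it arises from an honest complex manifold -- gives an almost complex structure $I_\lambda$ on $M$. Writing $N_s \,\cong\, E \otimes \O_{\P^1}(1)$ for the canonical $2n$-dimensional space $E \,=\, H^0(\P^1,\, N_s \otimes \O_{\P^1}(-1))$, one identifies $T_s M \otimes_\R \C \,\cong\, E \otimes H^0(\P^1,\, \O_{\P^1}(1))$, where $H^0(\P^1,\, \O_{\P^1}(1)) \,\cong\, \C^2$ is the standard $SU(2)$-representation and $\P^1 \,=\, \P(\C^2)$; the splitting of $\C^2$ determined by a line $\lambda$ then exhibits $I_\lambda$ explicitly, and the three structures coming from a quaternionic frame of $\C^2$ satisfy $IJK \,=\, -\mathrm{Id}$.

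To obtain the metric I would exploit hypothesis (c). Restricting $\omega$ to a fibre yields a holomorphic symplectic form $\omega_\lambda$ on $Z_\lambda$, and the twist by $\pi^*\O_{\P^1}(2)$ means that, in an affine coordinate $\zeta$ on $\P^1$, the transported forms $\ev_\lambda^* \omega_\lambda$ depend quadratically on $\zeta$, say $\omega_+ + \zeta\, \omega_0 + \zeta^2\, \omega_-$. Expanding in the triple $(I, J, K)$ identifies these coefficients with the standard combinations of the candidate K\"ahler forms $\omega_I, \omega_J, \omega_K$, where $\omega_I(\,\cdot\,,\,\cdot\,) \,=\, g(I\,\cdot\,,\,\cdot\,)$, and so on. The compatibility $\tau^* \omega \,=\, \overline{\omega}$, together with the fact that $\tau$ covers the antipodal $\sigma$, forces the three forms to be real; since each $\omega_\lambda$ is a holomorphic $2$-form on the complex fibre $Z_\lambda$ it is closed, and the quadratic expansion transfers this to closedness of $\omega_I, \omega_J, \omega_K$. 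Finally I would invoke the standard fact that a Riemannian metric carrying integrable complex structures $I, J, K$ with $IJK \,=\, -\mathrm{Id}$, Hermitian with respect to each, and with all three K\"ahler forms closed, is automatically hyper-K\"ahler (closedness forcing each $I_\lambda$ to be parallel for the Levi-Civita connection); each $\ev_\lambda$ then identifies $Z_\lambda$ with $(M, I_\lambda)$ carrying this metric, so $Z$ is the associated twistor space.

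The hardest step is the penultimate one: showing that the metric $g$ so defined is genuinely \emph{positive definite}. The quaternionic algebra and the closedness of the $\omega_I, \omega_J, \omega_K$ are essentially formal consequences of the $SU(2)$-structure on $H^0(\P^1,\, \O_{\P^1}(1))$ and the $\O_{\P^1}(2)$-twisting; but positivity is exactly where the reality constraint $\tau^* \omega \,=\, \overline{\omega}$ and the existence of the \emph{real} twistor lines of (a),(b) must be combined in a genuine computation, pairing $\omega$ with a real tangent vector and its $\tau$-image and invoking the positivity of $\O_{\P^1}(1)$ along each line. Reconciling the quaternion relations, the $\O_{\P^1}(2)$-expansion of $\omega$, and this reality constraint simultaneously is the technical heart of the argument.
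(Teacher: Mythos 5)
Before anything else, note what you are actually proving: the paper contains \emph{no proof} of this statement. It is quoted from Hitchin--Karlhede--Lindstr\"om--Ro\v{c}ek \cite[Theorem~3.3]{HKLR}, and is stated in the paper only so that the reader can see which hypotheses---(a) the real structure, (b) the twistor lines with normal bundle $\O_{\P^1}(1)^{\oplus 2n}$, (c) the $\O_{\P^1}(2)$-twisted relative symplectic form---the rest of Section~\ref{s:twspGP} must verify for the specific $Z$ built from $G/P$. So your proposal is a reconstruction of HKLR's proof, not of anything in this paper. As such a reconstruction, its outline is the standard and correct one: Kodaira deformation theory (using $H^1(\P^1,\O_{\P^1}(1)^{\oplus 2n})=0$) to get a smooth $4n$-dimensional complex family of sections with tangent space $H^0(\P^1,N_s)$; the real structure cutting out the real $4n$-manifold $M$ of $\tau$-invariant lines; the evaluation maps $\ev_\lambda$ being local diffeomorphisms (a real normal field vanishing at $\lambda$ also vanishes at $\sigma(\lambda)\neq\lambda$, hence vanishes); the quaternionic structure on $E\otimes H^0(\P^1,\O_{\P^1}(1))$; the quadratic expansion of $\omega$ producing the triple of K\"ahler forms; and Hitchin's lemma that closed K\"ahler forms for a quaternionic triple force the complex structures to be parallel.

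However, the step you yourself flag as ``the technical heart''---positive definiteness of $g$---is a genuine gap, not a deferred computation, and leaving it open means the theorem as stated is not proved. Hypotheses (a)--(c) by themselves yield in general only a nondegenerate, possibly indefinite (\emph{pseudo}-hyper-K\"ahler) metric on the space of real sections; that indefinite metrics satisfying exactly these twistorial axioms occur, in this very setting of coadjoint orbits, is the subject of Biquard's paper [Bi2] in the bibliography (``m\'etriques hyper-pseudok\"ahl\'eriennes''). The standard repair is to observe that nondegeneracy of $\omega$ forces the signature of $g$ to be locally constant on $M$, so definiteness need only be checked at one point of each connected component (or one restricts to the components where it holds)---but that check is unavoidable content. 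Two further, smaller defects: knowing $\ev_\lambda$ is a local diffeomorphism does not identify (a component of) $M$ with the fibre $Z_\lambda$, which the conclusion ``a metric on any of the fibres'' requires; one needs injectivity and surjectivity of $\ev_\lambda$, which in applications comes from the explicit parameterization of the lines (here, by points of $T^*(G/P)$). And your justification that the fibrewise form is closed ``since it is a holomorphic $2$-form'' is wrong: a holomorphic $2$-form need not be closed ($\partial\omega_\lambda$ is a possibly nonzero $(3,0)$-form); closedness is part of the meaning of ``symplectic'' in hypothesis (c), exactly as the paper verifies for its $\omega^\circ$ in Proposition~\ref{p:Cstarrelform}, where the form is fibrewise exact by construction.
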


The rest of Section \ref{s:twspGP} will be spent exhibiting these properties and therefore providing a proof of Theorem \ref{t:twspGP}.

\subsubsection{Construction of the total twistor space and the real structure}

Of course, we want to apply the construction in Section \ref{s:twistorgen} to build the twistor space. We use the description of $Z^\circ$ as in \eqref{e:Zcirc}. Explicitly, one has the submersion $\pi^\circ \,:\, Z^\circ \,\longrightarrow\, \C$ given by
\begin{align*}
Z^\circ \,=\, G_P \times^P \w \,\longrightarrow\, G_P \times^P \C \,\longrightarrow\, \C
\end{align*}
To construct a holomorphic submersion $\pi\, :\, Z \,\longrightarrow\, \P^1$ using the results of Section \ref{s:twistorgen}, we need to construct an anti-holomorphic involution $\tau^\circ \,:\,
 Z^\times \,\longrightarrow\, Z^\times$. For this, we use the isomorphism $\gamma : Z^\times \,\stackrel{\sim}{\longrightarrow}\, C \times \C^\times$ of Lemma \ref{t:Zstariso} and the existence of a real structure $\tau_C$ on 
\begin{align*}
C \,:=\, (\pi^\circ)^{-1}(1) \,\cong\, G/L\, .
\end{align*}
Recall that the isomorphism is given in Proposition \ref{p:GmodLCP} and the the real structure by Proposition \ref{t:cxfnGmodL}. Precisely, we set
\begin{align} \label{e:tauc}
\tau^\circ \,:=\, \gamma^{-1} \circ (\tau_C \times \sigma^\circ) \circ \gamma\, .
\end{align}
The fact that \eqref{e:Zstarreal} commutes then comes from the fact that \eqref{e:ZYC} does. This together with Lemma \ref{t:Zrealstr} yields the space $Z$ together with the real structure over the antipodal map on $\P^1$.

\subsubsection{Construction of the relative holomorphic symplectic form}

Proposition \ref{p:Cstarrelform} gives us a form $\omega^\circ \in \Gamma(Z^\circ, \Omega_{Z^\circ/\C}^2)$. As $Z$ is constructed from glueing $Z^\circ$ to $\overline{Z}^\circ$ via $\tau^\circ$, we would like to see that $\overline{\omega}^\circ \in \Gamma(\overline{Z}^\circ, \Omega_{\overline{Z}^\circ/\C}^2)$ patches with $\omega^\circ$ to give a well-defined section of $\Omega_{Z/\P^1}^2 \otimes \O_{\P^1}(2)$. Recall that we had written $\omega_1^\circ \in \Gamma(C, \Omega_C^2)$ for the restriction of $\omega^\circ$ to $C$ in Lemma \ref{l:gammapullback}; suppose for the moment that
\begin{align} \label{e:tauComega}
\tau_C^* \omega_1^\circ \,=\, -\overline{\omega}_1^\circ\, .
\end{align}

We had written $p \,:\, C \times \C^\times \,\longrightarrow\, C$ for the projection. In the following, we will use Lemma \ref{l:gammapullback} which states that
\begin{align*}
\omega^\circ \,=\, \gamma^* (\lambda p^* \omega_1^\circ)
\end{align*}
as well as the definition \eqref{e:tauc} of $\tau^\circ$ and \eqref{e:tauComega}:
\begin{align*}
(\tau^\circ)^* \overline{\omega}^\circ & = (\gamma^{-1} \circ (\tau_C \times \sigma^\circ) \circ \gamma)^* \overline{\gamma^* (\lambda p^* \omega_1^\circ)} = \gamma^* (\tau_C \times \sigma^\circ)^* ( \overline{\lambda} p^* \overline{\omega}_1^\circ) = \gamma^* ( \lambda^{-1} p^* \omega_1^\circ) \\
& = \lambda^{-2} \omega^\circ,
\end{align*}
noting that in the second to last equality, there are cancelling minus signs, one from \eqref{e:tauComega} and the other from the fact that $(\sigma^\circ)^* \overline{\lambda} = -\lambda^{-1}$. Since $\lambda^{-2}$ is the transition function for $\O(2)$, it follows that $\omega^\circ$ and $\overline{\omega}^\circ$ patch together to give a well-defined $\omega \in \Gamma(Z, \Omega_{Z/\P^1}^2 \otimes \O(2))$. Furthermore, the definition of the real structure $\tau \,:\,
 Z \,\longrightarrow\, Z$ is given in the patches by the identity map $Z^\circ \,\longrightarrow\, \overline{Z}^\circ$, and so it is tautological from our definition that
\begin{align*}
\tau^* \omega \,=\, \overline{\omega}\, ,
\end{align*}
which is the compatibility condition with respect to $\tau$.

\begin{proof}[Proof of \eqref{e:tauComega}]
We proceed as follows. By Proposition \ref{p:Cstarrelform}, the form $\omega_1^\circ$ is
the curvature form of the canonical connection, obtained via Proposition \ref{prop1} on
the pullback $Q$ of the $\C^\times$-bundle on $G/P$ to $G/L$ (via the natural
projection $G/L \,\longrightarrow\, G/P$). Note that by Corollary \ref{4}, we know that $Q$
is the $\C^\times$-bundle $G \times^{L, \chi} \C^\times$
over $G/L$ associated to the character $\chi \,:\, L \,\longrightarrow\, \C^\times$.

We will compute the connection $1$-form $\theta$ of this connection (since $Q$ is a $\C^\times$-bundle, this is a scalar $1$-form) associated to a section of $Q$ over a certain Zariski open set $A \,\subseteq\, G/L$. We then show that
\begin{align} \label{e:lastformula}
\tau_C^* \theta \,=\, - \overline{\theta}
\end{align}
on $A \cap \tau_C(A)$. This implies \eqref{e:tauComega} holds on $A \cap \tau_C(A)$: as there one has
\begin{align*}
\tau_C^* \omega_1^\circ \,=\, \tau_C^* \partial \theta \,=\,
 \tau_C^* d\theta \,=\, d \tau_C^* \theta \,=\, - d \overline{\theta}
\,= \,- \dbar \overline{\theta} \,=\, - \overline{\partial \theta}
\,=\, - \overline{\omega}_1^\circ\, .
\end{align*}
Thus, the difference $\tau_C^* \omega_1^\circ - \overline{\omega}_1^\circ$ is a global section of a locally free sheaf which is supported on a proper Zariski closed subset, hence it must vanish everywhere.

It remains to us to specify $A$, the section of $Q$ over $A$, compute $\theta$ and prove \eqref{e:lastformula}. However, we first record the description of the tangent bundle $\Theta_Q$ of $Q$ given by Lemma \ref{le1} as
\begin{align} \label{e:TQ}
\Theta_Q \,=\, \left( (G \times \C^\times) \times (\g \oplus \C)/\l \right) /L\, .
\end{align}
We will also recall that the connection on $Q$ is the splitting of its Atiyah sequence, which arises from the $L$-splitting of the top sequence of \eqref{e:B3GP}.
Then using \eqref{e:Lsplit}, the connection $1$-form $\theta' \,:\, \Theta_Q \,\longrightarrow\, \C$ can be explicitly written
\begin{align} \label{e:thetaprime}
[x,\, a,\, (\xi,\, c) + \l] \,\longmapsto\, \ d\chi(\xi^\l) + c\, ,
\end{align}
where $\xi^\l$ is the $\l$-component of $\xi$ under the decomposition $\g = \u_+ \oplus \l \oplus \u_-$.

The open set $A \,\subseteq\, G/L$ will be the preimage of the open cell $U_-P \,\subseteq \,G/P$ in the Bruhat decomposition of $G/P$, under the projection $G/L \,\longrightarrow\, G/P$; since $P \,=\, U_+L$, we see that $A\,:=\, U_- U_+ L \,\subseteq\, G/L$ is open. We will work with the section $s \,:\, A \,\longrightarrow\, Q$ of $Q$ over $A$ given by
\begin{align*}
(u,\,v) \,\longmapsto\, [uv,\, 1] \,\in\, G \times^{L, \chi} \C^\times\, .
\end{align*}

We will take $\theta$ as the connection $1$-form over $A$ for the connection described above with respect to the section $s$, i.e., $\theta = s^* \theta'$. In order to compute $\theta$, we will specify coordinates on $A$. Fix an ordering of the roots in $\Psi$. Then we take coordinates $x_\alpha$ on $U_-$, $y_\alpha$ on $U_+$, $\alpha \in \Psi$ as follows. One may write
\begin{align*}
U_- & = \prod_{\alpha \in \Psi} \exp( x_\alpha f_\alpha) & U_+ & = \prod_{\alpha \in \Psi} \exp( y_\alpha e_\alpha) 
\end{align*}
and with this, the coordinate tangent vector $\partial_{x_\alpha}$ is given by
\begin{align*}
\frac{d}{d\epsilon} \bigg|_{\epsilon=0} \left( \prod_{\beta < \alpha} \exp(x_\beta f_\beta) \right) \exp\big( (x_\alpha + \epsilon) f_\alpha \big) \left( \prod_{\beta > \alpha} \exp(x_\beta f_\beta) \right).
\end{align*}
We will write the product as $u_\alpha(\epsilon)$. A similar expression holds for $\partial_{y_\alpha}$.

Now, since the expression in \eqref{e:TQ} uses left-invariance to identify tangent vectors with elements in the Lie algebra, we take 
\begin{align*}
X_\alpha := \frac{d}{d\epsilon} \bigg|_{\epsilon = 0} (uv)^{-1} u_\alpha(\epsilon) v;
\end{align*}
note that $X_\alpha$ will depend on $u$, $v$. We have $u_\alpha(\epsilon) v = uv \exp(\epsilon X_\alpha)$ and so
\begin{align*}
ds(\partial_{x_\alpha}) = \frac{d}{d\epsilon} \bigg|_{\epsilon = 0} \left[ uv\exp(\epsilon X_\alpha), 1 \right] = [uv, 1, (X_\alpha, 0) + \l]
\end{align*}
with the latter the class modulo $L$ in the realization \eqref{e:TQ}. Then
under \eqref{e:thetaprime}, the value of $\theta$ can now be computed as
\begin{align*}
\theta(\partial_{x_\alpha})\, = \,d\chi( X_\alpha^\l)\, .
\end{align*}

If we do a similar calculation for $\partial_{y_\alpha}$, then
\begin{align*}
Y_\alpha := \frac{d}{d\epsilon} \bigg|_{\epsilon = 0} (uv)^{-1} u v_\alpha(\epsilon) \,=\, \frac{d}{d\epsilon} \bigg|_{\epsilon = 0} v^{-1} v_\alpha(\epsilon) \in \u_+,
\end{align*}
since $v, v_\alpha(\epsilon) \in U_+$, hence $Y_\alpha = Y_\alpha^+$ and from \eqref{e:thetaprime} it follows that
\begin{align*}
\theta(\partial_{y_\alpha}) = 0.
\end{align*}
Therefore, we obtain
\begin{align} \label{e:theta}
\theta \,=\, \sum_{\alpha \in \Psi} d\chi( X_{\alpha}^\l) dx_\alpha\, .
\end{align}
One may remark that the coordinates $x_\alpha, y_\alpha$ are ``close'' to being Darboux coordinates, so that the $1$-form should take a form similar to that of the canonical $1$-form on the cotangent bundle; this accords with the description in Section \ref{s:Cstarbdls}.

Now we will prove \eqref{e:lastformula}. Since $\theta$ is a holomorphic
$1$-form, and $\tau_C$ is anti-holomorphic, it follows that the pullback $\tau_C^* \theta$
is anti-holomorphic and therefore is of the form
\begin{align*}
\tau_C^* \theta \,=\, \sum a_\alpha d\overline{x}_\alpha + b_\alpha d\overline{y}_\alpha\, ,
\end{align*}
for some functions $a_\alpha$, $b_\alpha$. Now, 
\begin{align*}
ds \circ d\tau_C( \partial_{\overline{y}_\alpha}) \,=\, \left[ \tau_G(uv), 1, \left( d\tau_G(Y_\alpha), 0 \right), + \l \right]
\end{align*}
and so
\begin{align*}
\tau_C^*\theta( \partial_{\overline{y}_\alpha}) \,=\, d\chi \big( d\tau_G(Y_\alpha)^\l \big)
\end{align*}
but since $d\tau_G(Y_\alpha) \in \u_-$, this vanishes, hence all $b_\alpha\, = \,0$. Similarly,
\begin{align*}
\tau_C^*\theta(\partial_{\overline{x}_\alpha}) \,=\, d\chi \big( d\tau_G(X_\alpha)^\l \big) = d\chi \big( d\tau_G(X_\alpha^\l) \big) = - \overline{d\chi ( X_\alpha^\l )}, 
\end{align*}
with the second equality coming from the fact that $d\tau_G$ preserves $\l$ and the third from \eqref{e:drealstr}. Comparing with \eqref{e:theta}, we see that \eqref{e:lastformula} is proved.
\end{proof}

\subsubsection{Construction of the twistor lines} 

The twistor lines, that is the sections $s_\beta \,:\, \P^1 \,\longrightarrow\, Z$, are described as follows. Observe that the map $\nu_0 \,:\, \g/\p \,\longrightarrow\, (\g \oplus \C)/\p$ defined in \eqref{nu0} defines an $L$-splitting (but not a $P$-splitting!) of the lower sequence of $P$-representations in \eqref{e:RepOmega}: such a splitting is given by $\C \,\longrightarrow\, \w$ simply taking
\begin{align*}
1 \,\longmapsto\, \nu_0\, .
\end{align*}
The fact that this gives a morphism of $L$-modules is given by Lemma \eqref{le:F}(a), which says that $L$ acts trivially on $\nu_0$.

We begin, of course, by defining the twistor lines, and then the next task will be to compute their normal bundles. Since the underlying hyper-K\"ahler manifold is $T^*X$, for each point $\beta := [g, \varphi] \in G \times^P (\g/\p)^\vee = T^*X$, we wish to construct a (real) section $s_\beta \,
:\, \P^1 \,\longrightarrow\, Z$. We note that by \eqref{GPKF}, we may assume that $g
\,=\, k \in K$. Of course, we will use the standard charts $U_0$, $U_1$ on $\P^1$, say with $\lambda$ a coordinate on $U_0$ and $\mu = \lambda^{-1}$ a coordinate on $U_1$. We consider the section over $U_0$ given by
\begin{align} \label{e:sbeta}
\lambda \,\longmapsto\, [k,\, \lambda \nu_0 + \varphi] \,\in\, Z^\circ\, .
\end{align}
We wish to see that this extends to a holomorphic section $s_\beta$. Since $Z$ is constructed by glueing $Z^\circ$ to $\overline{Z}^\circ$ via $\tau^\circ$, defined in \eqref{e:tauc}, we would like to look at the image of \eqref{e:sbeta} under $\tau^\circ$ and verify that it extends to a holomorphic section over all of $\P^1$. Recall that Proposition \ref{p:GmodLCP} gave us isomorphisms
\begin{align*}
C \,\cong\, G/L \,\cong\, G \times^P \a\, ,
\end{align*}
with the second map given by \eqref{e:GmodLiso}
\begin{align*}
gL \,\longmapsto\, [g,\, \nu_0]\, .
\end{align*}
From Proposition \ref{t:cxfnGmodL}(b), the real structure $\tau_C$ under
the identification $C \,=\, G/L$, is $$\tau_C(gL) \,=\, \tau_G(g)L\, .$$ Therefore, identifying $C = G \times^P \a$, we can describe the real structure $\tau_C$ as follows: given $[g,\, \nu]\,\in
\, C$, we take $u \in U$ to be the unique element so that $\nu = u \cdot \nu_0$ (Lemma \ref{le:F}(b)) and then
\begin{align*}
\tau_C ([g, \,\nu] ) \,=\, \tau_C( [g, \,u \cdot \nu_0]) \,=\, \tau_C( [gu,\,
 \cdot \nu_0]) \,=\, [ \tau(g)\tau(u),\, \nu_0]\, .
\end{align*}

Using this, under $\tau^\circ$, \eqref{e:sbeta} maps to 
\begin{align} \label{e:taucirctwistor}
[k,\, \lambda \nu_0 + \varphi] & \stackrel{\gamma}{\longmapsto} \left( [k,\, \nu_0 + \mu \varphi],
\lambda \right) \,=\, \left( [k, \,u(\mu, \varphi) \cdot \nu_0 ], \lambda \right)
\,=\, \left( [k u(\mu, \varphi),\, \nu_0 ], \lambda \right) \nonumber \\
& \stackrel{\tau_C \times \sigma^\circ}{\longmapsto}
\left( [k \tau_G \big( u(\mu, \varphi) \big),\,
\nu_0 ], -\overline{\mu} \right) \stackrel{\gamma^{-1}}{\longmapsto}
\left[ k \tau_G \big(
u(\mu, \varphi) \big),\, - \overline{\mu} \nu_0 \right]\, .
\end{align}
Here we have taken $u \,=\, u(\mu, \varphi) \in U$ so that
\begin{align*}
u(\mu, \varphi) \cdot \nu_0 \,=\, \nu_0 + \mu \varphi\, .
\end{align*}
As in the proof of Lemma \ref{le:F}(b), we may write
\begin{align} \label{e:Emuvarphi}
u(\mu, \varphi) & = \exp\big( E(\mu, \varphi) \big) & E & = \,\sum_{j=1}^m E_j(\mu, \varphi) & E_j & = \,\sum_{\height \alpha = j} c_\alpha( \mu, \varphi) e_\alpha.
\end{align}
By Remark \ref{r:unu}, the $c_\alpha(\mu, \varphi)$ are non-constant polynomials in $\mu$.
Then using \eqref{e:exptau}, we have
\begin{align*}
\tau_G\big( u(\mu, \varphi) \big) \,= \,\exp\big( F( \overline{\mu}, \varphi) \big)
\end{align*}
where $F( \overline{\mu}, \varphi) \,=\, d\tau_G( E( \mu, \varphi) )$; explicitly,
\begin{align} \label{e:Fmuvarphi}
F & := \,\sum_{j=1}^m F_j(\overline{\mu}, \varphi) & F_j & :=\, \sum_{\height \alpha = j} \overline{c_\alpha( \mu, \varphi)} f_\alpha
\end{align}
and these are polynomial functions in $\overline{\mu}$. Rewriting \eqref{e:taucirctwistor}, we find
\begin{align*}
\tau^\circ \big( [k, \lambda \nu_0 + \varphi] \big) \,= \,
\left[ k \exp\big( F( \overline{\mu}, \varphi) \big), - \overline{\mu} \nu_0 \right],
\end{align*}
and recalling that we are using the conjugate complex structure on
$\overline{Z}^\circ$, then the right side is a holomorphic section over
$U_1$. Thus, we have a well-defined
\begin{equation}\label{sb}
s_\beta\, : \, \P^1 \,\longrightarrow\, Z\, .
\end{equation}

\subsubsection{Computation of normal bundles to twistor lines}

We now wish to show that the normal bundles to the sections $s_\beta$ in \eqref{sb} 
are isomorphic to $\O_{\P^1}(1)^{2n}$, where $n \,:=\, \dim G/P \,=\, |\Psi|$. We first 
begin by describing the tangent spaces at points of $Z^\circ$ which is itself a 
quotient of $G \times \w$. Let $(g, \nu) \,\in\, G \times \w$ be a representative of 
$[g, \,\nu] \,\in\, G \times^P \w \,=\, Z^\circ$. The tangent space can be identified
\begin{align*}
T_{(g, \nu)} (G \times \w) \,=\, T_g G \oplus T_\nu \w \,=\, \g \oplus \w
\end{align*}
using left translation in the $\g$-factor in the last equality. Therefore, we have the description of the tangent space
\begin{align*}
T_{[g,\nu]} Z^\circ = T_{[g,\nu]} (G \times^P \w) = (\g \oplus \w)/ \p,
\end{align*}
where we include $d\rho : \p \hookrightarrow \g \oplus \w$ (say, if we let $\rho$ denote the action of $P$ on $G \times \w$) via the infinitesimal action:
\begin{align*}
\xi \,\longmapsto\, (\xi, \,\xi \cdot \nu)\, .
\end{align*}
It is not hard to see that
\begin{align*}
\g \oplus \w \,=\, ( \u_- \oplus 0 ) \oplus d\rho( \p) \oplus
\big( 0 \oplus ( \u_+ \oplus \nu_0 ) \big)
\end{align*}
where we identify $\u_+ \,=\, (\g/\p)^\vee$ via the Killing form.

Therefore, for any point $[g, \nu] \in Z^\circ$, the tangent space $T_{[g, \nu]} Z^\circ$ is spanned by the images of $\u_- \oplus 0$ and $0 \oplus (\u_+ \oplus \nu_0)$. Taking up the notation of (the end of) Section \ref{ss:ucfv}, one can take a frame of $T_{[g, \nu]} Z^\circ$ by
\begin{align} \label{e:Zvfs}
\mathbf{f}_\alpha([g, \nu]) & := \dd [ g \exp( \epsilon f_\alpha), \nu], \ \alpha \in \Psi & \mathbf{\nu}_0([g, \nu]) & := \dd [ g, \nu + \epsilon \nu_0] \nonumber \\
\mathbf{e}_\alpha([g, \nu]) & := \dd [ g, \nu + \epsilon e_\alpha], \ \alpha \in \Psi & 
\end{align}

To prove that the normal bundles to the twistor lines are of the appropriate form, we will use induction arguments similar to those of the proof of Lemma \ref{le:F}. Let us choose a ``decreasing'' ordering of the roots $\alpha \in \Psi$, such that $\alpha_1$ is the highest root and $\height \alpha_i\, >\, \height \alpha_j$ implies $i \,<\, j$. We will shorten
\begin{align*}
e_i & := e_{\alpha_i} & f_i & := f_{\alpha_i}
\end{align*}
and let $\mathbf{e}_i$, $\mathbf{f}_i$ be the corresponding vector fields as in \eqref{e:Zvfs}.

We recall that the normal bundle $\NN_{s_\beta}$ corresponding to a section $s_ \beta \,:\, \P^1
\,\longrightarrow\, Z$ is defined by the exact sequence
\begin{align*}
0 \,\longrightarrow\, \Theta_{\P^1} \,\longrightarrow \,\Theta_Z|_{\P^1} \,\longrightarrow
\, \NN_{s_\beta} \,\longrightarrow\, 0\, ,
\end{align*}
where by restriction, we mean the pullback along $s_\beta$. From the expression of the section in \eqref{e:sbeta}, and the vector fields defined in \eqref{e:Zvfs}, it is clear that, in these local expressions, the inclusion $\Theta_{\P^1} \hookrightarrow \Theta_Z|_{\P^1}$ is given by the inclusion of the vector field $\mathbf{\nu}_0$. Therefore, $\NN_{s_\beta}$ is spanned by the $\mathbf{e}_\alpha|_{\P^1}$, $\mathbf{f}_\alpha|_{\P^1}$.

We now want to fix frames for $\NN_{s_\beta}$ over each of $U_0$ and $U_1$ and compute the transition function. Over $U_0$, we will fix the frame $\mathbf{p}_1, \ldots, \mathbf{p}_{2n}$ by
\begin{align*}
\mathbf{p}_i(\lambda) & := \mathbf{e}_i(\lambda), \quad 1 \leq i \leq n & \mathbf{p}_{n+i}(\lambda) & := \mathbf{f}_{n+1-i}(\lambda), \quad 1 \leq i \leq n;
\end{align*}
over $U_1$, we reverse the order, setting $\mathbf{q}_1, \ldots, \mathbf{q}_{2n}$ to be
\begin{align*}
\mathbf{q}_i(\mu) & := \mathbf{f}_i(\overline{\mu}), \quad 1 \leq i \leq n & \mathbf{q}_{n+i}(\mu) & := {\bf e}_{n+1-i}(\overline{\mu}), \quad 1 \leq i \leq n.
\end{align*}

We want to apply the following to the above frames to achieve our desired conclusion.

\begin{lem}
Let $U_0$, $U_1$ be the standard open covering of $\P^1$ with $\mu$ a coordinate on $U_1$. Let $M$ be a rank $m$ vector bundle over $\P^1$ with frames $s_1, \ldots, s_m$ on $U_0$ and $t_1, \ldots, t_m$ over $U_1$. Suppose that the transition function (i.e., the matrix whose columns are the coordinate vectors of the $s_i$ with respect to the $t_j$) with respect to these frames is of the form
\begin{align*}
\left[ \begin{array}{cccccc}
x_1 \mu & g_{12}(\mu) & g_{13}(\mu) & \cdots & g_{1, m-1}(\mu) & g_{1m}(\mu) \\
0 & x_2 \mu & g_{23}(\mu) & \cdots & g_{2, m-1}(\mu) & g_{2m}(\mu) \\
0 & 0 & x_3 \mu & \cdots & g_{3, m-1}(\mu) & g_{3m}(\mu) \\
\vdots & \vdots & \vdots & \ddots & \vdots & \vdots \\
0 & 0 & 0 & \cdots & x_{m-1} \mu & g_{m-1,m}(\mu) \\
0 & 0 & 0 & \cdots & 0 & x_m \mu 
\end{array}
\right]
\end{align*}
where the $x_i$ are non-zero constants and the $g_{ij}(\mu) \in \mu \C[\mu]$. Then $M \,
\cong \O_{\P^1}(1)^m$.
\end{lem}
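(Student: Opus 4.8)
The plan is to show that the given transition matrix $T(\mu)$ can be reduced, by holomorphic changes of frame over $U_0$ and $U_1$ separately, to the diagonal matrix $\diag(x_1\mu,\ldots,x_m\mu)$. In these coordinates $\O_{\P^1}(1)$ has transition function $\mu$ (consistent with $\O_{\P^1}(2)$ having transition function $\mu^2$), so a diagonal transition matrix with entries $x_i\mu$, the $x_i$ being non-zero constants, is exactly the transition matrix of $\bigoplus_{i=1}^m \O_{\P^1}(1) = \O_{\P^1}(1)^{\oplus m}$, the constants $x_i$ being absorbed by a constant rescaling of the frame. Thus reducing $T(\mu)$ to this diagonal form proves the claim.

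First I would observe that a change of frame over $U_1$, given by a matrix $B(\mu)\in\GL_m$ holomorphic in $\mu$, replaces $T(\mu)$ by $B(\mu)T(\mu)$, i.e.\ it acts by row operations whose multipliers are holomorphic at $\mu=0$. Since $T$ is already upper triangular with diagonal entries $x_i\mu$, to clear the super-diagonal entry in row $i$ and column $j$ (with $i<j$) one subtracts $g_{ij}(\mu)/(x_j\mu)$ times row $j$ from row $i$, the pivot being the diagonal entry $x_j\mu$. The crucial point is that this multiplier lies in $\C[\mu]$: because $g_{ij}(\mu)\in\mu\C[\mu]$ by hypothesis, the quotient $g_{ij}(\mu)/(x_j\mu) = x_j^{-1}\,g_{ij}(\mu)/\mu$ is a polynomial in $\mu$, hence holomorphic on all of $U_1$. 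Each such elementary operation is therefore an admissible change of frame over $U_1$.

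To carry this out without disturbing entries already cleared, I would process the columns from right to left, $j=m,m-1,\ldots,2$. By the time column $j$ is reached, the processing of the columns to its right has already cleared every entry strictly to the right of the diagonal in row $j$, so row $j$ has become $x_j\mu$ in its diagonal position and zero elsewhere; subtracting a multiple of this row from a higher row $i$ then alters only the single entry $(i,j)$, clearing it and leaving all other entries untouched. (The base of this induction is the bottom row, which is automatically of this single-entry form.) After all columns are processed, $T$ has been transformed into $\diag(x_1\mu,\ldots,x_m\mu)$, giving the isomorphism $M\cong\O_{\P^1}(1)^{\oplus m}$.

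The main subtlety — and the only place the hypothesis $g_{ij}\in\mu\C[\mu]$ is genuinely used — is the asymmetry that a change of frame over $U_1$ must be holomorphic in $\mu$, whereas one over $U_0$ must be holomorphic in $\lambda=\mu^{-1}$: it is precisely the divisibility of each $g_{ij}$ by $\mu$ that makes the row multipliers holomorphic at $\mu=0$, so that the whole elimination can be performed over $U_1$ alone. Alternatively, one could argue cohomologically: Grothendieck's theorem gives $M\cong\bigoplus_{i=1}^m\O_{\P^1}(a_i)$; the determinant of $T$ equals $(\prod_i x_i)\mu^m$, the transition function of $\O_{\P^1}(m)$, so $\sum_i a_i=m$; and a short computation with the transition function of $M(-2)=M\otimes\O_{\P^1}(-2)$ — working up from the bottom row, where the diagonal term $x_i\mu$ becomes $x_i\mu^{-1}$ after twisting and forces any candidate section to vanish — shows $H^0(\P^1,M(-2))=0$, whence every $a_i\le 1$ and therefore all $a_i=1$.
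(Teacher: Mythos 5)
Your proof is correct. The paper in fact states this lemma without proof (it is treated as a routine verification), so there is nothing to compare against; your argument fills the gap cleanly. The elimination argument is the natural one: the key points — that a change of the $U_1$-frame acts by row operations holomorphic in $\mu$, that the divisibility $g_{ij}\in\mu\C[\mu]$ makes each multiplier $g_{ij}(\mu)/(x_j\mu)$ polynomial, and that processing columns from right to left keeps each pivot row in single-entry form so no cleared entry is disturbed — are all present and correctly handled, and the final constant rescaling disposes of the $x_i$. The alternative cohomological argument (Grothendieck splitting, $\deg M=m$ from $\det T=(\prod_i x_i)\mu^m$, and $H^0(M(-2))=0$ by the bottom-up vanishing argument forcing all $a_i\le 1$, hence all $a_i=1$) is also valid and arguably more robust, since it uses only the triangular shape and the diagonal entries rather than the precise form of the off-diagonal terms.
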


We therefore compute the change of frame matrix.

Fix $i \in [1, n]$ and suppose $\height \alpha_i = j$ (the reader should keep in mind here that $i$ is the index for the root, while $j$ and $\ell$ will be indices for the height of $\alpha_i$ and other roots). Then 
\begin{align*}
\mathbf{p}_i(\lambda) & = \mathbf{e}_i([k, \lambda \nu_0 + \varphi]) := \dd [ k, \lambda \nu_0 + \varphi + \epsilon e_i] \stackrel{\gamma}{\longmapsto}
 \dd \left( [ k, \nu_0 + \mu( \varphi + \epsilon e_i)], \lambda \right)
\end{align*}
To continue, we need to find $u(\mu, \varphi, \epsilon)$ as in \eqref{e:Emuvarphi} so that
\begin{align*}
u(\mu, \varphi, \epsilon) \cdot \nu_0 = \nu_0 + \mu( \varphi + \epsilon e_i).
\end{align*}
Such will be of the form
\begin{align*}
u(\mu, \varphi, \epsilon) & = \exp\big( E(\mu, \varphi, \epsilon) \big).
\end{align*}
But if $\height \alpha_i = j$, then following the induction procedure in the proof of Lemma \ref{le:F}, since $\overline{f}_i^* \,=\, t_i e_i$ for some non-zero $t_i \in \C^\times$, we see that we will get 
\begin{align*}
E_\ell(\mu, \varphi, \epsilon) & = E_\ell(\mu, \varphi), \quad \ell < j & E_j(\mu, \varphi, \epsilon) & = E_j(\mu, \varphi) + \epsilon \mu t_i e_i 
\end{align*}
and $E_\ell(\mu, \varphi, \epsilon)$ depends on $\epsilon$ for $\ell > j$. Thus,
\begin{align*}
\mathbf{p}_i(\lambda) & \stackrel{\gamma}{\longmapsto}
\dd \left( \left[ k, \exp\big( E(\mu, \varphi, \epsilon) \big) \cdot \nu_0 \right], \lambda \right) = \dd \left( \left[ k \exp\big( E(\mu, \varphi, \epsilon) \big), \nu_0 \right], \lambda \right) \\
& \stackrel{\tau_C \times \sigma^\circ}{\longmapsto} \dd \left( \left[ k \exp\big( F(\mu, \varphi, \epsilon) \big), \nu_0 \right], -\overline{\mu} \right)
\stackrel{\gamma^{-1}}{\longmapsto}
\dd \left[ k \exp\big( F(\mu, \varphi, \epsilon) \big), -\overline{\mu} \nu_0 \right].
\end{align*}
Here,
\begin{align*}
F(\mu, \varphi, \epsilon) & = \,\sum_{\ell = 1}^m F_\ell(\mu, \varphi, \epsilon) 
\end{align*}
with
\begin{align*}
F_\ell(\mu, \varphi, \epsilon) & = F_\ell(\mu, \varphi), \quad 1 \leq \ell < j & F_j(\mu, \varphi, \epsilon) & = F_j(\mu, \varphi) + \epsilon \overline{\mu} \overline{t}_i f_i \\
F_\ell(\mu, \varphi, \epsilon) & = F_\ell(\mu, \varphi) + \epsilon \overline{\mu} \widetilde{F}_\ell, \quad j < \ell \leq m 
\end{align*}
where the $F_\ell(\mu, \varphi)$ are as in \eqref{e:Fmuvarphi} and the $\widetilde{F}_\ell \in \g_\ell$ are sums of root vectors of weight $\ell$. The factor of $\overline{\mu}$ preceding $\widetilde{F}_\ell$ follows from the same reasoning as in Remark \ref{r:unu}. It follows that
\begin{align*}
\mathbf{p}_i(\lambda) & \stackrel{\tau^0}{\longmapsto} \dd \left[ k \exp\big( F(\mu, \varphi) \big) \exp\left( \epsilon \overline{\mu} \left( \overline{t}_i f_i + \sum_{\ell > j} \widetilde{F}_\ell \right) \right), - \overline{\mu} \nu_0 \right] \\
& = \overline{\mu} \left( t_i \mathbf{q}_i(\mu) + \sum_{j < i} t_j(\mu) \mathbf{q}_j(\mu) \right).
\end{align*}
for some functions $t_j(\mu)$. This shows that the first $n$ sections of the frame give upper triangular transition functions.

Now, we also have
\begin{align*}
\mathbf{p}_{n+i}(\lambda) & = \mathbf{f}_{n+1-i}([k, \lambda \nu_0 + \varphi]) := \dd [ k \exp(\epsilon f_{n+1-i}), \lambda \nu_0 + \varphi] \\
& \stackrel{\gamma}{\longmapsto} \dd \left( [k \exp(\epsilon f_{n+1-i}), \nu_0 + \mu \varphi], \lambda \right) = \dd \left( [k \exp(\epsilon f_{n+1-i}), u(\mu, \varphi) \cdot \nu_0 ], \lambda \right) \\
& = \dd \left( [k \exp(\epsilon f_{n+1-i}) u(\mu, \varphi), \nu_0 ], \lambda \right)
\stackrel{\tau_C \times \sigma^\circ}{\longmapsto} \dd \left( [k \exp(\epsilon e_{n+1-i}) \tau_G(u), \nu_0 ], -\overline{\mu} \right) \\
& = \dd \left( [k\tau_G(u) \exp(\epsilon \Ad_{\tau_G(u)^{-1}} e_{n+1-i}) , \nu_0 ], -\overline{\mu} \right).
\end{align*}
Since $\tau_G(u) \in U_-$, we will get
\begin{align*}
\Ad_{\tau_G(u)^{-1}} e_{n+1-i} \,=\, e_{n+1-i} + \sum_{j > {n+1-i}} \widetilde{e}_j + \widetilde{h} + \sum_j \widetilde{f}_j\, ,
\end{align*}
with $\widetilde{e}_j \in \g_{\alpha_j}$, $\widetilde{h} \in \l$ and $\widetilde{f}_j \in \g_{-\alpha_j}$ with $\alpha_j \in \Psi$. Since $\exp( \epsilon \widetilde{h}) \in L$ and $L$ stabilizes $\nu_0$, the above can be written
\begin{align*}
\mathbf{p}_{n+i}(\lambda) & \longmapsto \dd \left( \left[k \tau_G(u) \exp\left( \epsilon \sum \widetilde{f}_j \right), \exp\left( \epsilon \left( e_{n+1-i} + \sum_{j > n+1-i} \widetilde{e}_j \right) \right) \cdot \nu_0 \right], - \overline{\mu} \right) \\
& \longmapsto \dd \left[k \tau_G(u) \exp\left( \epsilon \sum \widetilde{f}_j \right), - \overline{\mu} \nu_0 - \epsilon \overline{\mu} \left( s_{n+1-i} e_{n+1-i} + \sum_{j > n+1-i} \widetilde{s}_j \widetilde{e}_j \right) \right]\, ,
\end{align*}
using Lemma \ref{l:uactionnu0}, where $s_{n+1-i} = s_{\alpha_{n+1-i}}$ and the $\widetilde{s}_j$ are some constants depending on the other $s_\beta$. This last is 
\begin{align*}
- s_{n+1-i} \overline{\mu} \mathbf{e}_{n+1-i}([k \tau_G(u), - \overline{\mu} \nu_0]) - \overline{\mu} \sum_{j > n+1-i} \widetilde{t}_j \mathbf{e}_j([k \tau_G(u), - \overline{\mu} \nu_0]) +
\end{align*}
But this is of the form
\begin{align*}
-s_i \overline{\mu} \mathbf{q}_{n+i}(\mu) + \overline{\mu} \sum_{j < n+i} t_j(\overline{\mu}) \mathbf{q}_j(\mu)\, ,
\end{align*}
for some functions $t_j(\mu)$. We can therefore apply the lemma
above to conclude that the normal bundles are indeed $\O_{\P^1}(1)^{\oplus 2n}$.

\subsection{Application: Purity of Hodge structures of the affine
varieties $G/L$}

We rely on the following statement found at \cite[Appendix B, Theorem B.1]{HLRV}.

\begin{thm}
Let $X$ be a smooth complex algebraic variety and $f \,:\, X \,\longrightarrow\, \C$ a smooth algebraic morphism, i.e., a surjective submersion. Suppose that $X$ admits a $\C^\times$-action and that $f$ is equivariant with respect to a positive power of the standard $\C^\times$-action on $\C$. We further assume that the fixed point set $X^{\C^\times}$ is complete and that for all $x \in X$, $\lim_{\lambda \to 0} \lambda \cdot x$ exists. Then the mixed Hodge structures on all of the fibres are in fact pure and all isomorphic.
\end{thm}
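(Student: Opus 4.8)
The plan is to reduce everything to the Bialynicki--Birula stratification of $X$ induced by the $\C^\times$-action, and to use the submersion hypothesis to control how this stratification meets each fibre. First I fix $d>0$ with $f(\lambda\cdot x)=\lambda^{d}f(x)$ and observe that the fixed locus $F:=X^{\C^\times}$ lies in $X_{0}=f^{-1}(0)$: for $x\in F$ one has $f(x)=f(\lambda\cdot x)=\lambda^{d}f(x)$ for all $\lambda$, forcing $f(x)=0$. Since $\lim_{\lambda\to0}\lambda\cdot x$ exists for every $x$ and $F$ is complete, $X$ is semiprojective, and the Bialynicki--Birula theorem gives a decomposition $X=\bigsqcup_{i}X_{i}^{+}$ into the attracting sets of the connected components $F_{i}$ of $F$, each limit map $X_{i}^{+}\to F_{i}$ being a Zariski-locally trivial affine-space bundle; write $c_{i}:=\codim_{X}X_{i}^{+}$. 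The $F_{i}$ are smooth (fixed loci of torus actions on smooth varieties are smooth) and proper (closed in the complete $F$). I also note, for the general fibre, that $\lambda$ carries $X_{t}$ isomorphically onto $X_{\lambda^{d}t}$ and that $\lambda\mapsto\lambda^{d}$ is surjective on $\C^{\times}$; hence all $X_{t}$ with $t\neq0$ are already isomorphic as varieties and the family over $\C^{\times}$ has finite monodromy. The stratification argument below, however, handles all $t$ (including $t=0$) uniformly.

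The key step is a local computation, and this is where the submersion hypothesis does the essential work. Fix $i$ and $p\in F_{i}$. Because $f$ is a submersion, $df_{p}\colon T_{p}X\to\C$ is surjective and $\C^\times$-equivariant of weight $d$; since $T_{p}F_{i}$ has weight $0\neq d$, the weight-$d$ line on which $df_{p}$ is nonzero lies in the positive-weight part of the normal bundle, i.e.\ among the fibre directions of $X_{i}^{+}\to F_{i}$. Choosing linear fibre coordinates $x_{1},\dots,x_{r}$ on the fibre $\mathbb{A}^{r}$ over $p$, with $x_{1}$ of weight exactly $d$ and every $x_{j}$ of positive weight, the restriction of $f$ is a weight-$d$ polynomial in which no monomial $x_{1}x_{j}$ can occur (its weight would be $d+\mathrm{wt}(x_{j})>d$); hence $x_{1}$ appears only linearly and, by submersivity,
\begin{align*}
f|_{\mathbb{A}^{r}}\,=\,c\,x_{1}+g(x_{2},\dots,x_{r}),\qquad c\neq0.
\end{align*}
Consequently, for \emph{every} $t\in\C$ the level set $\{f=t\}\cap\mathbb{A}^{r}$ is the graph $x_{1}=c^{-1}\bigl(t-g(x_{2},\dots,x_{r})\bigr)$, hence isomorphic to $\mathbb{A}^{r-1}$. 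Therefore $X_{t}\cap X_{i}^{+}\to F_{i}$ is again an affine-space bundle, of rank $r-1$, and a short dimension count gives $\codim_{X_{t}}(X_{t}\cap X_{i}^{+})=c_{i}$, independently of $t$.

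It remains to pass from the stratification to cohomology. For fixed $t$ the fibre $X_{t}$ is smooth and is filtered by the closed unions of the locally closed strata $X_{t}\cap X_{i}^{+}$, each an affine-space bundle over the smooth projective $F_{i}$, the closure order being inherited from the perfect stratification of $X$. I would then invoke the standard fact that a smooth variety admitting a stratification into affine bundles over smooth projective bases has pure cohomology, the associated Gysin/weight spectral sequence degenerating to give $H^{k}(X_{t})\cong\bigoplus_{i}H^{k-2c_{i}}(F_{i})(-c_{i})$. Each summand is pure of weight $(k-2c_{i})+2c_{i}=k$, so $H^{k}(X_{t})$ is pure of weight $k$; and since neither the $F_{i}$ nor the $c_{i}$ depend on $t$, the right-hand side is the same pure Hodge structure for every $t$ (it also agrees with $H^{k}(X)$ computed from the Bialynicki--Birula stratification of the total space). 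This shows the mixed Hodge structures on all fibres are pure and mutually isomorphic. The main obstacle is precisely the local computation of the second paragraph: the submersion hypothesis is exactly what forces a weight-$d$ coordinate in which $f$ is affine-linear, so the fibrewise level sets stay affine spaces \emph{uniformly in $t$}; without it (as for $f(a,b)=ab$, which fails to be submersive at the fixed point) the general and special fibres can have genuinely different cohomology.
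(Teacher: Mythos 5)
You should know at the outset that the paper does not prove this statement: it is quoted verbatim from \cite[Appendix B, Theorem B.1]{HLRV} and used as a black box to deduce Corollary \ref{corf}, the purity of the Hodge structure of $G/L$. So there is no internal proof to compare yours against; it has to be judged on its own merits and against the cited appendix, which likewise argues via Bialynicki--Birula theory for semiprojective varieties.

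Judged so, the core of your proposal is correct, and it isolates exactly the right mechanism. The observations that $X^{\C^\times}\subset f^{-1}(0)$; that at a fixed point $p$ equivariance forces $df_p$ to be supported on the weight-$d$ summand of $T_pX$, which (as $d>0$) lies among the attracting directions; and that a regular function of weight $d$ in coordinates of positive weights can contain no monomial involving a weight-$d$ variable other than that variable itself --- these give the normal form $f=c\,x_1+g(x_2,\dots,x_r)$ with $c\neq 0$ on each attracting fibre, hence that $X_t\cap X_i^{+}\to F_i$ is an affine-space bundle of rank $r_i-1$ for every $t$, of constant codimension $c_i$ in $X_t$. Your example $f(a,b)=ab$ correctly explains why submersivity is indispensable. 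One point you use silently should be made explicit: you need the fibration $X_i^{+}\to F_i$ to be Zariski-locally \emph{equivariantly} trivial with linear $\C^\times$-action on the fibres, so that the coordinates $x_j$ and the unit $c$ can be chosen uniformly over a neighbourhood in $F_i$; this is true for smooth $X$, but it is a theorem to be cited, not a formality.

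The genuine gap is the final step, where you invoke the ``standard fact'' that a smooth variety stratified by affine bundles over smooth projective bases has pure cohomology equal to $\bigoplus_i H^{k-2c_i}(F_i)(-c_i)$. That fact is proved via the spectral sequence of a filtration by closed subvarieties, so it needs the stratification to be \emph{filterable} --- an ordering of the strata whose partial unions are closed --- and your aside that the closure order is ``inherited from the perfect stratification of $X$'' assumes precisely what must be shown, namely that the Bialynicki--Birula decomposition of $X$ is filterable. That is a theorem of Bialynicki--Birula for smooth projective $X$, and it holds for quasi-projective $X$ by ordering the $F_i$ according to the weights of a linearized ample bundle; but the statement you are proving allows an arbitrary smooth variety, where filterability of such decompositions is not automatic (it can fail for smooth complete non-projective varieties). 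The clean repair is either to establish filterability under the semiprojectivity hypotheses, or to reduce to the quasi-projective case --- which is all this paper needs, since its total space is $Z^\circ$, a vector bundle over $G/P$ --- and then to observe that intersecting a filtration of $X$ by closed unions of strata with the fibre $X_t$ filters your stratification of $X_t$ as well; the degeneration of the resulting spectral sequence by weight purity of its $E_1$-terms is then indeed standard. With that point closed, your argument is a complete proof.
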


\begin{cor}\label{corf}
The Hodge structure on the quotient $G/L$ is pure.
\end{cor}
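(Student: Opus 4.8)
The plan is to apply the theorem of \cite{HLRV} quoted above directly to the morphism $\pi^\circ \,:\, Z^\circ \,\longrightarrow\, \C$, taking for the group action the fibrewise scalar multiplication on the total space $Z^\circ \,=\, \Tot(W_P)$. First I would record that the structural hypotheses are already established: by Theorem \ref{t:twspGP} the variety $Z^\circ \,=\, G_P \times^P \w$ is algebraic, and being the total space of a vector bundle over the smooth projective variety $G/P$ it is smooth; moreover $\pi^\circ$ is a surjective submersion (see Section \ref{s:twspGP}). The relevant fibre is $C \,=\, (\pi^\circ)^{-1}(1)$, which by Proposition \ref{p:GmodLCP} is identified with $G/L$, so purity of its Hodge structure is exactly what the theorem will deliver.

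Next I would verify the three conditions on the $\C^\times$-action. The scalar multiplication $\C^\times \times Z^\circ \,\longrightarrow\, Z^\circ$ satisfies $\pi^\circ(\mu \cdot z) \,=\, \mu\, \pi^\circ(z)$, since $\pi^\circ$ is a vector-bundle (hence linear) map followed by a projection; thus $\pi^\circ$ is equivariant for the first, and in particular a positive, power of the standard action on $\C$. Because the action is linear on each fibre of $W_P$, for every $z \,\in\, Z^\circ$ one has $\lim_{\lambda \to 0} \lambda \cdot z \,=\, 0$ in the corresponding fibre, so the required limits exist. Finally, a point of $Z^\circ$ is fixed by all of $\C^\times$ precisely when it is the origin of its fibre, so the fixed-point locus $(Z^\circ)^{\C^\times}$ is the zero section, which is isomorphic to $G/P$ and hence complete.

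With these hypotheses checked, the theorem of \cite{HLRV} yields that the mixed Hodge structures on all fibres of $\pi^\circ$ are pure and mutually isomorphic; applying this to the fibre over $1 \,\in\, \C$ and using the identification $C \,\cong\, G/L$ of Proposition \ref{p:GmodLCP} gives the purity of the Hodge structure on $G/L$. I expect the only point requiring genuine care to be the verification that the fixed-point locus is exactly the zero section $G/P$, so that completeness holds, and that every orbit limits into it as $\lambda \to 0$. These are precisely the properties that can fail for a general smooth morphism to $\C$ but hold here because the action is the \emph{linear} $\C^\times$-action on the total space of a vector bundle whose base is projective.
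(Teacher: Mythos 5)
Your proposal is correct and is precisely the argument the paper intends: the corollary is stated immediately after the quoted theorem of \cite{HLRV} and is meant to follow by applying it to the family $\pi^\circ : Z^\circ \to \C$ with the fibrewise scaling action, whose fixed locus is the zero section $G/P$ and whose fibre over $1$ is $C \cong G/L$ by Proposition \ref{p:GmodLCP}. Your verification of the hypotheses (equivariance of $\pi^\circ$ with weight one, existence of the limits, completeness of the fixed-point set) supplies exactly the details the paper leaves implicit.
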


\begin{rmk}
This is a consequence of the more general statement \cite[Proposition 2.2.6]{HLRV}. We observe that the construction of Section \ref{ss:ucfv} gives an algebraic family over $\C$ whose fibre over $0$ is the cotangent bundle of a partial flag variety for a reductive group $G$ and whose generic fibre is a coadjoint orbit for $G$. Of interest would be to generalize this question in the following way. Let $R$ be a truncated polynomial ring, say $R \,=\, \C[t]/(t^m)$ for some $m \in \mathbb{N}$. Then $G \otimes_\C R$ is a group over $R$ and we may consider the Weil restriction $G_R$ back to $\C$: this is an algebraic group over $\C$ with $G_R(\C) \,=\, G(R)$. Coadjoint orbits for this group $G_R$ are the building blocks for certain moduli spaces of meromorphic connections (with irregular singularities) over the projective line. It is conjectured that these moduli spaces have pure cohomology \cite{HWW}; a result similar to Corollary \ref{corf} would perhaps be a step towards a proof.
\end{rmk}

\section*{Acknowledgments}

Discussions on this subject began when we were attending the program ``Geometry, Topology and Dynamics of Moduli Spaces'' in 
August, 2016 at the Institute for Mathematical Sciences, National University of Singapore. We would like to thank the organizers, 
especially Graeme Wilkin, for our respective invitations to the event. IB is supported by a J. C. Bose Fellowship. MLW was 
supported by SFB/TR 45 ``Periods, moduli and arithmetic of algebraic varieties'', subproject M08-10 ``Moduli of vector bundles on 
higher-dimensional varieties''. He is also grateful to Daniel Greb for several conversations clarifying the matter discussed 
here. This work was completed while
both the authors were participating in a conference in the
International Centre for Theoretical Sciences; we thank ICTS for its hospitality.

\end{document}